\DeclareSymbolFont{extraitalic} {U}{zavm}{m}{it}
\DeclareMathSymbol{\stigma}{\mathord}{extraitalic}{168}
\newtheorem*{proposition*}{Proposition}
\newtheorem*{theorem*}{Theorem}
\newtheorem{theorem}{Theorem}[subsection]
\newtheorem{proposition}[theorem]{Proposition}
\newtheorem{lemma}[theorem]{Lemma}
\newtheorem{corollary}[theorem]{Corollary}
\newtheorem{conjecture}[theorem]{Conjecture}
\theoremstyle{remark}\newtheorem{remark}[theorem]{Remark}
\newtheoremstyle{boldremark}
    {\dimexpr\topsep/2\relax} 
    {\dimexpr\topsep/2\relax} 
    {}          
    {}          
    {\bfseries} 
    {.}         
    {.5em}      
    {}          
\theoremstyle{boldremark}
\newtheorem*{conjecture*}{Conjecture}
\newcommand*\leftdash{\rotatebox[origin=c]{-45}{$\dabar@\dabar@\dabar@$}}
\newcommand*\rightdash{\rotatebox[origin=c]{45}{$\dabar@\dabar@\dabar@$}}
\newcommand{\bfone}{\mathbf{1}}
\newcommand{\Qlbar}{\overline{\mathbf{Q}}_{\ell}}
\newcommand{\kk}{\mathbf{k}}
\newcommand{\KK}{\mathbf{K}}
\newcommand{\Zl}{\mathbf{Z}_\ell}
\newcommand{\R}{\mathfrak{R}}
\newcommand{\GL}{\textrm{GL}}
\newcommand{\bfGL}{\mathbf{GL}}
\newcommand{\bfe}{\mathbf{e}}
\newcommand{\SL}{\textrm{SL}}
\newcommand{\bfG}{\mathbf{G}}
\newcommand{\bfB}{\mathbf{B}}
\newcommand{\bfT}{\mathbf{T}}
\newcommand{\bfU}{\mathbf{U}}
\newcommand{\n}{\mathfrak{n}}
\newcommand{\g}{\mathfrak{g}}
\newcommand{\fl}{\mathcal{B}}
\newcommand{\yy}{\mathcal{Y}}
\newcommand{\gspr}{\tilde{\g}}
\newcommand{\gstb}{\mathbf{St}_\g}
\newcommand{\cC}{\mathcal{C}}
\newcommand{\ct}{\mathcal{C}(T)(\Qlbar)}
\newcommand{\F}{\mathcal{F}}
\newcommand{\G}{\mathcal{G}}
\newcommand{\cO}{\mathcal{O}}
\newcommand{\hc}{\mathfrak{hc}}
\newcommand{\cc}{\underline{\Qlbar}}
\newcommand{\DD}{\hat{\Delta}}
\newcommand{\dd}{\hat{\delta}}
\newcommand{\NN}{\hat{\nabla}}
\newcommand{\T}{\hat{\mathcal{T}}}
\newcommand{\IC}{\operatorname{IC}}
\newcommand{\E}{\mathcal{E}}
\newcommand{\Spgr}{\mathcal{S}}
\newcommand{\Rep}{\operatorname{Rep}}
\newcommand{\RepG}{\operatorname{Rep}(\mathbf{G})}
\newcommand{\RepT}{\operatorname{Rep}(\mathbf{T})}
\newcommand{\Ho}{\operatorname{Ho}}
\newcommand{\pro}{\operatorname{pro}}
\newcommand{\M}{\mathscr{M}}
\newcommand{\Tilt}{\operatorname{Tilt}}
\newcommand{\perf}{\mathrm{perf}}
\newcommand{\coh}{\mathrm{coh}}
\newcommand{\ind}{\operatorname{Ind}}
\newcommand{\res}{\operatorname{Res}}
\newcommand{\Hom}{\operatorname{Hom}}
\newcommand{\Spec}{\operatorname{Spec}}
\newcommand{\invlim}[1]{\lim\limits_{\xleftarrow[#1]{}}}
\newcommand{\pH}{{^p\mathcal{H}}}
\newcommand{\Ad}{\operatorname{Ad}}
\newcommand{\id}{\operatorname{id}}
\titleformat{\subsection}[runin]
{\normalfont\bfseries}{\thesubsection}{1em}{}
\titleformat{\subsubsection}[runin]
{\normalfont\bfseries}{\thesubsubsection}{1em}{}
 \newcommand{\Addresses}{{
  \bigskip
  \footnotesize

  K.~Tolmachov, \textsc{Department of Mathematics, University of Hamburg,
    Hamburg, Germany}\par\nopagebreak
  \textit{E-mail address}: \texttt{tolmak@khtos.com}
 }}
\DeclareSymbolFont{extraitalic} {U}{zavm}{m}{it}
\DeclareMathSymbol{\stigma}{\mathord}{extraitalic}{168}
\begin{document} \title{Linear structure on a finite Hecke category\\ in type A}

\author{Kostiantyn Tolmachov}
\date{}
\maketitle
\abstract{For the group $\GL_n$, we construct an action of the equivariant derived category of coherent sheaves on the Grothendieck-Springer resolution on a certain subcategory of a finite monodromic Hecke category. We use this to construct a partial categorification of the projection from the extened affine to the finite Hecke algebra of $\GL_n$. As a crucial intermediate step, we compute the exterior powers, with respect to the perversely truncated multiplicative convolution, of a parabolic Springer sheaf corresponding to a maximal parabolic subgroup fixing a line in the defining $n$-dimensional representation of $\GL_n$.}
\tableofcontents
\section{Notations.}
Fix primes $\ell\neq p$. Let $\kk$ be an algebraically closed field of characteristic $p$, and let $\KK = \Qlbar$. All triangulated categories considered are assumed to be linear over $\KK$.

We will work with stacks defined over $\kk$ and $\KK$ appearing on different sides of Langlands duality. We will use normal typeface $X, G, \dots $ for geometric objects over $\kk$, and bold typeface $\mathbf{X}, \mathbf{G}$ for geometric objects over $\KK$.

For a stack $X$ defined over $\kk$, let $D^b(X)$ be the bounded $\ell$-adic derived category of $\Qlbar$-sheaves with constructible cohomology. Let $P(X)$ be the abelian category of perverse sheaves on $X$ with respect to the middle perversity. For a group $H$ acting on a scheme $X$, let $D^b_H(X) = D^b(X/H)$ and $P_H(X)$ be the corresponding equivariant categories, defined as in \cite{bernsteinEquivariantSheavesFunctors2006}, \cite{laszloSixOperationsSheaves2008a}. Let $\cc_{X}$ stand for the constant sheaf on $X$.

For an arbitrary stack $\mathbf{X}$ over $\KK$, write $\coh(\mathbf{X})$ and $D^b_{\coh}(\mathbf{X})$ for the category of coherent sheaves and bounded coherent derived category of sheaves on $\mathbf{X}$, respectively, and let $\perf(\mathbf{X})\subset D^b_{\coh}(\mathbf{X})$ stand for the perfect subcategory. For a group $\mathbf{H}$ acting on a scheme $\mathbf{X}$, let $D^b_{\coh,\mathbf{H}}(\mathbf{X})=D^b_\coh(\mathbf{X}/\mathbf{H})$ and  $\perf_{\mathbf{H}}(\mathbf{X})$ be the corresponding equivariant categories.

For reductive groups $G, \mathbf{G}$, defined over $\kk$ or over $\KK$, respectively, we will write $B, \bfB$ for the chosen Borel subgroups, $T \subset B, \bfT \subset \bfB$ for the maximal tori and $U \subset B, \bfU \subset \bfB$ for the unipotent radicals of $B, \bfB$. Let $\g$ stand for the Lie algebra of $\bfG$.

We write $\RepG$ for the symmetric monoidal category of finite dimensional representations of $\bfG$ defined over $\KK$, so that $\RepG \simeq \coh(\Spec(\KK)/\bfG)$.

For $\bfG = \bfGL_n$ we identify the weight lattice $\Lambda$ of $\bfG$ with $\mathbb{Z}^n$ with basis $\bfe_1, \dots, \bfe_n$, so that the Weyl group $W = S_n$ of $\bfGL_n$ acts by permutations of this basis, and simple roots are given by $\alpha_i = \bfe_{i+1}-\bfe_{i}$ for $i = 1, \dots, n-1$. Write $s_i = s_{\alpha_i} \in W$. We adopt similar conventions for $G = \GL_n$, defining a basis $e_1,\dots, e_n$. Let $w_0$ stand for the longest element in $W$.

We always assume that the characteristic $p$ of $\kk$ is good for $G$.
\section{Introduction.}
\subsection{Linear structures.} Let $\mathcal{T}$ be a $\KK$-linear triangulated category, and let $X$ be a stack defined over $\KK$. We call an exact action of the monoidal category $\perf(\mathbf{X})$ on $\mathcal{T}$ a linear structure on $\mathcal{T}$ over $X$ .

 Let $G, \bfG$ be reductive groups defined over $\kk$ and $\KK$, respectively.

Let $\mathcal{H}(G)$ stand for the completed unipotently monodromic Hecke category of a reductive group $G$, defined  in \cite{bezrukavnikovKoszulDualityKacMoody2013}. This is a certain completion of a full subcategory of unipotently monodromic complexes in $D^b(U\backslash G/U)$. The category $\mathcal{H}(G)$ is monoidal, with the monoidal structure being the convolution operation, which we denote by $- \star -$.

Let $\g$ be the Lie algebra of $\bfG$, let $\mathfrak{b}$ be the Lie algebra of a Borel subgroup $\bfB \subset \bfG$, and let $\gspr =  \bfG\times^{B} \mathfrak{b}$ be the Grothendieck-Springer resolution. 
The goal of this paper is to prove that the Hecke category $\mathcal{H}_n = \mathcal{H}(\GL_n)$ associated with the general linear group $G = \GL_n$ admits a linear structure over $\gstb/\bfG$, where $\bfG = \bfGL_n,$ and $\gstb = \gspr\times_\g\gspr$ is the Steinberg variety associated with the Lie algebra $\g = \mathfrak{gl}_{n}$. For the rest of the introduction, let $G = \GL_n, \bfG = \bfGL_n$.

Let $\mathcal{H}_n^{\perf}$ be the full triangulated subcategory of $\mathcal{H}_n$ generated by the big free-monodromic tilting sheaf $\T_{w_0}$ -- see Section \ref{sec:finite-monodr-hecke}. We will equip $\mathcal{H}_n^{\perf}$ with  two compatible linear structures over the stack $\gspr/\bfG$.

The following theorem is one of our main results (Theorem \ref{sec:extens-funct-from-1} in the main text).
\begin{theorem}
 There is an exact monoidal functor 
\begin{equation}
  \label{eq:1}
 \varpi: \perf_G(\gstb) \to \mathcal{H}_n, 
\end{equation}
\sloppy where the monoidal structure on the perfect equivariant category $\perf_G(\gstb)$ is given by convolution. The functor takes values in $\mathcal{H}_n^{\perf}$ and is compatible with the linear structures over $\gspr/\bfG$.
\end{theorem}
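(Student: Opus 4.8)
The strategy I would follow is to realize $\varpi$ as the monoidal packaging of an action of $D^b_{\coh,G}(\gspr)$ on $\mathcal{H}_n$, exploiting that $\gstb$ is the fibre product $\gspr\times_{\g}\gspr$ and hence a groupoid over $\g$ with objects $\gspr$, whose category of coherent sheaves under convolution is the category of $\coh_G(\g)$-linear integral kernels acting on $\coh_G(\gspr)$. Thus the theorem is the "monoidal shadow" of the linear structure advertised in the abstract.

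\emph{Step 1: the action on the Grothendieck--Springer side.} I would first construct a $\coh_G(\g)$-linear action of $D^b_{\coh,G}(\gspr)$ on a suitable full subcategory $\mathcal{D}\subseteq\mathcal{H}_n$. The key input is a monoidal functor $\RepG\to\mathcal{H}_n$ landing in central objects: send the fundamental representation $\bigwedge^{k}\KK^{n}$ of $\bfGL_n$ to the $k$-th exterior power of the parabolic Springer sheaf $\mathfrak{S}_{P}$ — $P\subset\GL_n$ the maximal parabolic fixing a line in the defining representation — formed with respect to the perversely truncated multiplicative convolution $(\mathcal{A},\mathcal{B})\mapsto\pH^{0}(\mathcal{A}\star\mathcal{B})$. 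Since the classes $[\bigwedge^{k}\KK^{n}]$, $k=1,\dots,n$, generate the representation ring of $\bfGL_n$, with $[\bigwedge^{n}\KK^{n}]$ invertible, this — together with verifying the relations these objects satisfy, the crucial technical computation flagged in the abstract — pins down the functor on all of $\RepG$. Feeding in, in addition, the two commuting pro-unipotent monodromy actions (which supply the $\coh_G(\g)$-linearity, the $\bfG$-invariant part being the familiar $\mathcal{O}(\h)^{W}$-action via Chevalley) and the standard, equivalently tilting, monodromic objects $\T_{s}$ for the minimal parabolics, one obtains enough to generate the action of $\coh_G(\gspr)$, with $\mathcal{O}_{\gspr}$ acting as the identity functor.

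\emph{Step 2: promotion to a monoidal functor.} Given such a $\coh_G(\g)$-linear action, a kernel $\mathcal{K}\in\coh_G(\gstb)$ acts on $\mathcal{D}$ by pull--tensor--push through the two legs $\gstb\rightrightarrows\gspr$, and associativity of convolution on $\gstb$ reduces to base change along $\gspr\times_{\g}\gspr\times_{\g}\gspr\to\gstb$; this makes $D^b_{\coh,G}(\gstb)$ act on $\mathcal{D}$ compatibly with convolution. Restricting to the perfect subcategory $\perf_G(\gstb)$, so that kernels are dualizable and convolution preserves the subcategory, and then checking that this monoidal action is compatible with the right regular $\mathcal{H}_n$-module structure on $\mathcal{H}_n$ — equivalently, that the objects used above already generate $\mathcal{H}_n$ under convolution, so $\mathcal{D}$ determines all of it — upgrades the action to the required exact monoidal functor $\varpi\colon\perf_G(\gstb)\to\mathcal{H}_n$, the relations in $\perf_G(\gstb)$ being transported automatically.

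I expect the obstacles, in increasing order of subtlety, to be: (1) the exterior-power computation for $\mathfrak{S}_{P}$, where, the truncation functors not being monoidal, associativity and the Koszul-type relations presenting $\RepG$ must be established by hand; (2) the coherence of the assembled monoidal structure, controlled by flatness of the Grothendieck simultaneous resolution over $\g$, which is what suppresses higher $\mathrm{Tor}$-obstructions in the base-change arguments; and (3) the pro-completion in the definition of $\mathcal{H}_n$ — all constructions, in particular the monodromy action of the completed symmetric algebra, must be continuous for the relevant topology so that $\varpi$ genuinely lands in $\mathcal{H}_n$ rather than in an uncompleted or ind-completed variant.
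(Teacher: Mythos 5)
Your proposal correctly isolates the two inputs the paper also uses --- the $\RepG$-action built from the exterior powers $\wedge^k\Spgr_n$ of the parabolic Springer sheaf under $\star^0$, and the $\RepT$-action by the Jucys--Murphy objects $\mathfrak{J}_\lambda$ --- but there is a genuine gap at the point where you pass from these two actions to an action of $\coh_G(\gspr)$, and your Step 2 is circular. Knowing that $\Rep(\bfG\times\bfT)$ acts and that the classes of the $\wedge^k\mathbf{V}_n$ generate the representation ring does not ``pin down'' the linear structure: to make $\coh_G(\gspr)$ (and then $\gstb$) act one needs, for every dominant $\lambda$, a \emph{highest weight arrow} $w_\lambda:\mathfrak{J}_\lambda\star\T_{w_0}\to\phi_{\mathbf{V}_\lambda}(\T_{w_0})$ compatible with tensor products, together with a tensor endomorphism $E$ of the $\RepG$-action satisfying the Leibniz rule $E_{\mathbf{V}_1\otimes\mathbf{V}_2}=E_{\mathbf{V}_1}\otimes\id+\id\otimes E_{\mathbf{V}_2}$ and acting on the image of $w_\lambda$ through the monodromy character $\lambda$. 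This is the Tannakian datum of Bezrukavnikov's formalism (Proposition \ref{sec:extens-funct-from}), and producing it is the substantive work of Section \ref{sec:extens-funct-from-2}: one uses that $\hat{\zeta}(\wedge^k\mathbf{V}_n)=\ind_{P_k}^G(\stigma_k)$ is parabolically induced from the Levi $\GL_k\times\GL_{n-k}$, takes $E_k$ to be the monodromy along the central $\mathbb{G}_m\subset\GL_k$ (the central element $\bfe_1+\dots+\bfe_k$), and establishes the filtration of $\hc\hat{\zeta}(\wedge^k\mathbf{V}_n)$ by the $\mathfrak{J}_{\lambda'}$, $\lambda'\in W\omega_k$, categorifying the weight decomposition. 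Your ``pull--tensor--push through the two legs of $\gstb\rightrightarrows\gspr$'' presupposes that $\mathcal{H}_n^{\perf}$ already carries a module structure identifying it with sheaves on $\gspr$ over $\coh_G(\g)$, which is exactly what must be constructed; without the highest-weight/monodromy compatibility there is nothing to push through.

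Two smaller points. First, the paper does not construct the action on $\gstb$ directly but on an affine model $\overline{\mathbf{C}}_{\gstb}$ built from the affine closure of $\bfG/\bfU$ inside a multiplicity-free representation, and then descends to $\perf_G(\gstb)$ by checking that the Koszul complexes $K_\lambda$ act by zero on $\T_{w_0}$; some such device is unavoidable because the relevant categories are not generated by free sheaves, and your proposal has no substitute for it. Second, the image of $\varpi$ is not all of $\mathcal{H}_n$ but the subcategory $\mathcal{H}_n^{\perf}$ generated by $\T_{w_0}$ (the functor is defined by acting on the single object $\T_{w_0}$), so the generation statement you propose to verify in Step 2 is both false and unnecessary; likewise the tilting objects attached to minimal parabolics play no role in the construction.
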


\subsection{Tannakian formalism for the Steinberg variety.} We proceed to outline the main ideas of our construction of the linear structures and the functor $\varpi$.
The construction employs the Tannakian formalism for $D^b_{\coh}(\gspr/\bfG), \perf_\bfG(\gstb)$ developed by Bezrukavnikov in \cite{bezrukavnikovTwoGeometricRealizations2016}. Roughly, this formalism allows us to construct a linear structure over $\gspr/G$ on an arbitrary monoidal triangulated category $\mathcal{T}$ by equipping $\mathcal{T}$ with an action of $\Rep(\bfG \times \bfT)$ satisfying certain compatibilities. If we have two such structures with isomorphic actions of two copies of $\RepG$, we get a monoidal functor $\perf_{\bfG}(\gstb) \to \mathcal{T}$.
\begin{remark}
  In \cite{bezrukavnikovTwoGeometricRealizations2016}, this formalism is used to construct a monoidal equivalence between $D^b_{\coh, \bfG}(\widehat{\gstb})$, where $\widehat{\gstb}$ stands for a certain formal completion of $\gstb$, and the completed monodromic affine Hecke category $\mathcal{H}_{G^{\vee}}^{\mathrm{aff}}$. We do not know if it is possible to extend the functor $\varpi$ to $D^b_{\coh,\bfG}(\widehat{\gstb})$. 
\end{remark}
\subsection{Homomorphism between affine and finite Hecke algebras in type A.}
Construction of the functor $\varpi$ is a step towards a categorification of a homomorphism from the affine to the finite Hecke algebra of $\GL_n$. 

The homomorphism in question may be most readily seen on the level of the corresponding braid groups. If we interpret the braid group $B_n$ on $n$ strands as the fundamental group of the configuration space $\textrm{Conf}_n(\mathbb{C})$ of $n$ points in the complex plane $\mathbb{C}$ based at a configuration avoiding $0$, and the affine braid group ${B}_n^{\textrm{aff}}$ as the fundamental group of the configuration space $\textrm{Conf}_n(\mathbb{C}^*)$ of $n$ points in the punctured complex plane $\mathbb{C}^*$ based at the same configuration, then the inclusion $\mathbb{C}^* \to \mathbb{C}$ gives a homomorpism
\[
  \varpi_{\textrm{br}}:{B}_n^{\textrm{aff}}\simeq \pi_1(\textrm{Conf}_n(\mathbb{C}^*)) \to \pi_1(\textrm{Conf}_n(\mathbb{C})) \simeq B_n.
\]
This descends to a homomorphism
\[
  \varpi_{\textrm{alg}}:{H}_n^{\textrm{aff}} \to H_n
\]
between the corresponding extended affine Hecke algebra ${H}^{\textrm{aff}}_n$ of the reductive group $\bfGL_n$ and the finite Hecke algebra $H_n$. Recall that ${H}^{\textrm{aff}}_n$ contains $H_n$ as a subalgebra. It also contains a commutative subalgebra, defined by Bernstein, which is identified with the group algebra of the (co-)weight lattice of $\bfGL_n$. Let $\theta_1, \dots, \theta_n$ be the generators of this subalgebra corresponding to the basis elements $\bfe_1, \dots, \bfe_n$. The homomorphism $\varpi_{\textrm{alg}}$ is defined uniquely by the following two properties: $\varpi_{\textrm{alg}}(x) = x$ for $x \in H_n$, and $\varpi_{\textrm{alg}}(\theta_1) = 1.$ The images of the elements $\theta_k$ in the finite Hecke algebra are known as multiplicative Jucys--Murphy elements therein, and play an important role in the representation theory of $H_n$, analogous to the role of classical Jucys--Murphy elements for symmetric groups -- see \cite{isaevRepresentationsHeckeAlgebras2005}, \cite{okounkovNewApproachRepresentation1996}.  

The $\Rep(\bfT)$-action employed in the Tannakian formalism of \cite{bezrukavnikovTwoGeometricRealizations2016} described above corresponds to the action of the commutative subalgebra generated by $\theta_i^{\pm 1}$. The action of $V \in \Rep(\bfG)$ corresponds to the action of the central element in $H^{\textrm{aff}}_n$ corresponding to $V$ -- see \cite{lusztigSingularitiesCharacterFormulas1983}, \cite{gaitsgoryConstructionCentralElements2001}. This element can be expressed as a symmetric Laurent polynomial in $\theta_i$, according to the character of $V$, which is expressed on the categorical level as the compatibility condition on $\Rep(\bfT)$- and $\Rep(\bfG)$- actions mentioned above.

\begin{remark}
  To speak of a categorification of the homomorphism $\varpi_{alg}$ in the setting of geometric Hecke categories, one needs to consider ``mixed'' versions of these categories. We do not treat the ``mixed'' ca\-tegorification in the present article. We expect, however, that methods of \cite{ho2022revisiting} can be used to obtain ``mixed'' versions of our results.
\end{remark}

\subsubsection{Action of $\RepT$.}
\label{sec:action-reptk} The action of $\Rep(\bfT)$ is defined in Section \ref{sec:jucys-murphy-sheav} using the description of the homomorphism $\varpi_{\textrm{br}}$ and the braid group action on $\mathcal{H}_n$ defined by Rouquier \cite{rouquierCategorificationBraidGroups2004}. To any braid $\beta \in B_n$ one can attach an object $F_\beta \in \mathcal{H}_n$ such that the action of $\beta \in B_n$ is isomorphic to the functor of convolution with $F_\beta$. For any weight ${\boldsymbol\lambda}$ there is a lift $\tilde{\theta}({\boldsymbol\lambda}) \in B_n^{\textrm{aff}}$ of the Bernstein commutative subalgebra element $\theta({\boldsymbol\lambda}) \in H_n^{\textrm{aff}}$ attached to ${\boldsymbol\lambda}$. We define the action of ${\boldsymbol\lambda}$, considered as an object in $\Rep(\bfT)$, to be the functor corresponding to the action of the braid $\varpi_{\textrm{br}}(\tilde{\theta}({\boldsymbol\lambda}))$. It is isomorphic to the functor of convolution with $F_{\varpi_{\textrm{br}}(\tilde{\theta}({\boldsymbol\lambda}))}$. This complex can be expressed as a convolution of complexes of the form $F_\beta$ attached to the Jucys--Murphy braids.      
\subsubsection{Action of $\RepG$.}  
\label{sec:action-repg}
Recall that the category $D^b(G)$ can be equipped with a monoidal structure as follows. Let $m:G\times G \to G$ be the multiplication map. For $\F, \G \in D(G)$, write
\begin{equation}
\label{eq:8}
  \F \star \G = m_*(\F\boxtimes\G).
\end{equation}
The operation $\star$ is sometimes called $\ast$-convolution, to distinguish it from the dual operation using $m_!$ instead of $m_*$ in its definition. In this paper, we will work with the $\ast$-convolution $-\star-$ and refer to it simply as convolution. This operation is a categorification of the operation of convolution of functions on $G$.

The equivariant category $D^b_G(G)$, where $G$ acts on itself by the adjoint action, can be equipped with the compatible monoidal structure. The resulting monoidal category is canonically braided. In particular, for any pair of objects $\F, \G \in D^b_G(G)$, there is a braiding isomorphism 
\[
  \beta_{\F,\G}:\F\star\G \to \G\star\F.
\]
 Moreover, $D^b_G(G)$ is equipped with an endomorphism of the identity functor, \[\theta:\operatorname{Id}_{D^b_G(G)}\to\operatorname{Id}_{D^b_G(G)},\] called a twist. The twist $\theta$ is related to the braiding by the formula
\[
  \theta_{\F\star\G} = \beta_{\G,\F}\circ\beta_{\F,\G}\circ(\theta_{\F}\star\theta_G).
\]
See \cite{boyarchenkoCharacterSheavesUnipotent2013} for a detailed discussion.

The convolution $\star$ on $D^b(G)$ is right $t$-exact with respect to the perverse $t$-structure, since $m$ is an affine morphism. This allows us to equip $P(G), P_G(G)$ with a perversely truncated convolution
 \[
   \F \star^0 \G = \pH^0(\F\star\G).
 \]
This makes $P_G(G)$ a braided monoidal category as well.
The canonical braided structure on $D^b_G(G)$ is almost never symmetric, even in the case when $G$ is commutative.
 
It is well known that $G = \GL_n$ satisfies the following property: for any $g\in G$, the centralizer $Z_G(g)$ is connected. We check in Section \ref{sec:conv-reduct-group} that in this case $P_G(G)$ with the monoidal structure $\star^0$ is symmetric braided. This essentially follows from the fact that the twist automorphism $\theta$ acts by the identity on perverse objects. Consequently, we can speak of various Schur functors in this category, in particular of exterior powers $\wedge^k\F$ for any object $\F \in P_G(G)$. In order to construct the action of $\Rep(\bfG)$,  we compute the exterior powers of a particular object called the \emph{parabolic Springer sheaf} (with respect to a particular parabolic subgroup).  

Let $P \subset G$ be a parabolic subgroup. Let $U = U_P$ stand for the unipotent radical of $P$. We write $x \mapsto {}^gx := gxg^{-1}.$ Define
\[
  \tilde{\mathcal{N}}_{P} = \{(x, gP)\in G\times G/P: x \in {}^gU\}.
\]
Let $\pi: \tilde{\mathcal{N}}_P \to G$ be the natural projection, which is known to be a semismall map, and define a perverse sheaf $\Spgr_P = \pi_*\cc_{\tilde{\mathcal{N}}_P}[2 \dim U].$
Let $\mathcal{N}_G$ stand for the variety of unipotent elements in $G$. The sheaves $\Spgr_P$ are supported on $\mathcal{N}_G.$

Now let $V = \kk^n$. The Weyl group $W \simeq S_n$ acts on $V$ by permuting the standard basis. Let $W_1 \subset W$ be the stabilizer of the first basis vector. Let $G = \GL_n = \GL(V)$, let $B$ be the Borel subgroup stabilizing the standard complete flag in $V$, and let $P_1\supset B$ be the parabolic subgroup fixing the line in $\mathbb{P}(V)$ spanned by the first basis vector. Write
\[
  \Spgr_n := \Spgr_{P_1}.
\]
Let
\[
  \Rep(W) \to D^b_G(\mathcal{N}_G), V' \mapsto \Spgr_{V'}
\]
stand for the functor given by the Springer correspondence \cite{brylinskiTransformationsCanoniquesDualite1986}, \cite{borhoRepresentationsGroupesWeyl1981}. We follow the conventions of \cite{brylinskiTransformationsCanoniquesDualite1986}, so that the trivial representation of $W$ corresponds to the punctual perverse sheaf at the unit $e \in \mathcal{N}_G$. Since $V \simeq \operatorname{Ind}_{W_1}^W \operatorname{triv}_{W_1},$ we have $\mathcal{S}_n \simeq \mathcal{S}_V$ \cite{borhoRepresentationsGroupesWeyl1981}. Write $\Spgr_{\wedge^k}:=\Spgr_{\wedge^k V}$.  

We will look for a $\Rep(\bfG)$-action on $\mathcal{H}_n^{\perf}$ such that the convolution with $\mathcal{S}_n$ gives the action of the defining representation $\mathbf{V}$ of $\bfG = \bfGL(\mathbf{V})$. Our main computational result is the following
\begin{theorem}
  \label{sec:stat-main-results}

  Exterior powers of $\Spgr_n$ with respect to the symmetric monoidal structure $\star^0$ satisfy the following: for $k \geq 0,$ there is an isomorphism
  \[\wedge^k\Spgr_n \simeq \Spgr_{\wedge^k}.\]
\end{theorem} 

It is well-known that the category $D^b_G(G)$ acts on $\mathcal{H}_n$ by left and right convolution. Moreover, this action is central, meaning that the action on the right can be identified with the action on the left, using the equivariance with respect to adjoint action. It is thus natural to look for a $\RepG$-action on $\mathcal{H}_n$ that factors through the action of $D^b_G(G)$.

Let $\RepG^{+}$ stand for the symmetric monoidal category of \emph{polynomial} representations of $\bfG = \bfGL_n$. We have the following corollary of Theorem \ref{sec:stat-main-results} (Section \ref{sec:repgk+-repgk-action-1} and the proof of Corollary \ref{sec:repgk+-repgk-action}).
\begin{corollary}
  \label{sec:action-repgk}
There is a symmetric monoidal functor $\RepG^+ \to P_G(G)$ sending the defining representation $\mathbf{V}$ of $\bfG$ to $\mathcal{S}_n$.   
\end{corollary}
In Corollary \ref{sec:repgk+-repgk-action} it is shown that the subcategory $\mathcal{H}_n^{\perf}$ admits an action of $P_G(G)$ by convolution, and that the $\RepG^+$-action on $\mathcal{H}_n^{\perf}$ obtained can be extended to the action of $\RepG$. 
\begin{remark}
 It would be interesting to consider the values of other Schur functors applied to $\mathcal{S}_n$. The author, however, does not know how to approach their computation, even in the ``linearized'' case of sheaves on the Lie algebra $\g$, as in \cite{bezrukavnikovTolmachovExt}. In that case, for example, the author does not expect other Schur functors in the derived category to be perverse and does not see a reason to expect them to be semisimple.    
\end{remark}
\begin{remark}
  We note that the functor $\varpi$ constructed with the help of the $\Rep(\bfG)$-action above turns out to have a very simple description, see Section~\ref{sec:comp-with-mono}. The non-trivial part of the present paper is the construction of the $\Rep(\bfG)$-action via the exterior powers computation, and its various compatibilities. 
\end{remark}

\subsubsection{Jucys--Murphy filtrations and monodromy.}  To complete the construction of the linear structure and the functor $\varpi$ one needs to establish the compatibility of $\RepG$ and $\RepT$ actions. This is done in Section~\ref{sec:extens-funct-from-2}. The main idea is to use the fact that the objects $\wedge^{k}\Spgr_n$ come from the parabolic induction from the Levi subgroup $\GL_k \times \GL_{n-k}$ of $G$. This allows us to construct filtrations of the functors defining the $\RepG$-action by the functors defining the $\RepT$-action categorifying the weight filtration. The monodromy transformations required by the Tannakian formalism of \cite{bezrukavnikovTwoGeometricRealizations2016} come from the monodromy along the central $\mathbb{G}_m \subset \GL_k$.  
\subsection{Related work.} Patterns suggesting the existence of a categorification of the homomorphism $\varpi_{\textrm{alg}}:H_n^{\mathrm{aff}} \to H_\n$ play a prominent role in the study of categorified link invariants \cite{gnr}. This was one of the starting points of our investigation. The (``mixed'' analogues of) objects  $\wedge^k\mathcal{S}_n$ (more precisely, their projection to the category of unipotent character sheaves) are objects representing Khovanov--Rozansky homology in the setting of monodromic Hecke categories \cite{hhh}.

In the setting of matrix factorizations, the categorification of $\varpi_{\textrm{br}}$  was constructed by Oblomkov and Rozansky in \cite{oblomkovAFFINEBRAIDGROUP2019}.   

Most of the results and ideas of this paper are contained in the author's thesis \cite{tolmachovFunctorAffineFinite2018}. The main difference is the replacement of the monoidal derived category of character sheaves with the abelian category, and technical complications arising from this.
This replacement is needed because the derived category $D^b_{\GL_n}(\GL_n)$ is not \emph{symmetric} braided, but was mistakenly treated as such in ibid.\@ We expect, however, that the braiding on the derived category can be modified to make it symmetric and make the simpler argument of \cite{tolmachovFunctorAffineFinite2018} go through. See the discussion in Section \ref{sec:conv-reduct-group}.

In the note \cite{bezrukavnikovTolmachovExt}, joint with Bezrukavnikov, we consider a simpler analogue of some of the results presented here in the setting of the derived category of sheaves on the Lie algebra $\mathfrak{gl}_n$.

A categorification of $\varpi_{\textrm{alg}}$ whose source is an additive diagrammatic affine Hecke category is constructed in \cite{MACKAAY2024109401}. See also \cite{eliasGaitsgoryCentralSheaves2018} for a discussion of the diagrammatic approach to the triangulated categorification $\varpi_{\textrm{alg}}$.

A different approach to constructing a functor from the affine Hecke category is developed in \cite{taoAffineHeckeCategory2021}, which presents the monoidal $\infty$-version of the Hecke category as a monoidal colimit of the finite type Hecke categories associated with parahoric subgroups. The description of $\varpi_{\textrm{alg}}$ in terms of braid groups provides a natural candidate for the restriction of its categorification to these finite type Hecke categories. See \cite{tao_talk} for the discussion of the compatabilities between these restrictions required to extend them to the affine Hecke category.  

\subsection{Acknowledgments.} I would like to thank {Roman} {Bezrukavnikov} for many helpful conversations and his encouraging attention to this project.
One of the main ideas of this paper, the inductive use of Mellin transform in the proof of Theorem \ref{sec:lie-group-case}, is due to him. I would like to thank Pavel Etingof, Michael Finkelberg and Ivan Mirković for helpful discussions.

I would like to thank the anonymous referees for a careful reading of the manuscript and many suggestions on improving the presentation.

\sloppy The author was supported by the EPSRC programme grant EP/R034826/1 and by the Deutsche Forschungsgemeinschaft SFB 1624 grant, Projektnummer 506632645.
\section{Finite monodromic Hecke category.}
\label{sec:finite-monodr-hecke}
In this section, we collect some definitions and statements about the finite Hecke categories. 

\subsection{Free-monodromic objects.}
\label{sec:free-monodr-objects}
Let $G$ be an arbitrary reductive group. Let $\mathcal{H}'(G)$ be the full triangulated subcategory of $D^b(U\backslash G/U)$ generated by the image of the functor $\pi^*:D^b(U\backslash G/B) \to D^b(U\backslash G/U),$ where $\pi$ is the map induced by the standard projection $G/U \to G/B$. Complexes in $\mathcal{H}'(G)$ are characterized among all constructible sheaves on $U \backslash G/U$ by the property that their restriction to the fibers of the $T$-torsor $G/U \to G/B$ have smooth cohomology with unipotent monodromy.

In \cite{bezrukavnikovKoszulDualityKacMoody2013}, a completion $\mathcal{H}(G) \subset \pro\mathcal{H}'(G)$ was defined. This is a triangulated monoidal category, equipped with a convolution operation $\star$, coming with three collections of objects $\DD_w, \NN_w, \T_w,$ indexed by the elements $w$ in the Weyl group $W$. These objects are referred to as free-monodromic standard, costandard and tilting objects, respectively. 

We collect some facts about these objects in the following
 \begin{proposition}[\cite{bezrukavnikovKoszulDualityKacMoody2013}, \cite{zbMATH07610537}]~
 \label{braid_relations}
  \begin{enumerate}[label=\alph*),ref=(\alph*)]
  \item\label{item:11} $\dd = \DD_{e}$ is the unit of the monoidal structure $\star$.
  \item\label{item:12} $\DD_v \star \DD_w \simeq \DD_{vw}$ and $\NN_v \star \NN_w \simeq \NN_{vw}$ if $l(vw) = l(v) + l(w)$.
  \item\label{item:13} $\DD_v \star \NN_{v^{-1}} \simeq \dd.$
  \item\label{item:14}\(\DD_w\star\T_{w_0}\simeq\T_{w_0}\star\DD_w\simeq \T_{w_0}\) and \(\NN_w\star\T_{w_0}\simeq\T_{w_0}\star\NN_w\simeq \T_{w_0}.\)
  \end{enumerate}
 \end{proposition}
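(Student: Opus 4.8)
The plan is to reduce the four assertions to explicit statements about Bruhat cells, then to the semisimple rank-one case, and to recover \ref{item:14} from the classification of indecomposable tilting objects. Throughout, recall that $\DD_w$ and $\NN_w$ are the images in $\mathcal{H}(G)$ of the $!$- and $*$-extensions of the suitably shifted free-monodromic (pro-unipotent) local system $\LL_w$ on the Bruhat stratum $\yy_w = U\backslash BwB/U$, and that $\dd = \DD_e = \NN_e$ is supported on the closed stratum $\yy_e \cong T$. Assertion \ref{item:11} is the statement that convolution with the free-monodromic local system on $\yy_e$ is the identity functor: since $B \times^B X \xrightarrow{\sim} X$ for any $B$-variety $X$, the underlying constructible statement — that convolving with the extension from $\yy_e$ preserves support and acts as the identity on each $T$-fibre — is immediate, and the only real point is that the pro-unipotent structure on $\dd$ is exactly the one for which the resulting map is an isomorphism on the nose, which is built into the construction in \cite{bezrukavnikovKoszulDualityKacMoody2014}.

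For \ref{item:12} I would use the classical fact that when $l(vw) = l(v) + l(w)$ the multiplication map restricts to an isomorphism $BvB \times^B BwB \xrightarrow{\sim} BvwB$ (equivalently $U_v U_w = U_{vw}$, from the Bruhat decomposition), while on closures it is proper and surjective. Computing $\DD_v \star \DD_w$ as a $!$-pushforward along this map and applying base change along the open stratum $\yy_{vw}$ identifies the result with $(j_{vw})_!\LL_{vw} = \DD_{vw}$, once one checks that the free-monodromic local system on $\yy_{vw}$ is pulled back from $\yy_v \times \yy_w$ — compatible again by construction; the case of $\NN$ is the same computation with $*$-pushforward. Iterating, for any reduced expression $w = s_{i_1}\cdots s_{i_k}$ one gets $\DD_w \simeq \DD_{s_{i_1}} \star \cdots \star \DD_{s_{i_k}}$ and $\NN_w \simeq \NN_{s_{i_1}} \star \cdots \star \NN_{s_{i_k}}$, so all of the structure reduces to simple reflections.

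Given this, \ref{item:13} follows from the single rank-one identity $\DD_s \star \NN_s \simeq \dd$: writing $v = s_{i_1}\cdots s_{i_k}$ reduced, so that $v^{-1} = s_{i_k}\cdots s_{i_1}$ is reduced, \ref{item:12} turns $\DD_v \star \NN_{v^{-1}}$ into $\DD_{s_{i_1}} \star \cdots \star \DD_{s_{i_k}} \star \NN_{s_{i_k}} \star \cdots \star \NN_{s_{i_1}}$, which collapses to $\dd$ by cancelling adjacent pairs from the middle outward (using \ref{item:11}). The identity $\DD_s \star \NN_s \simeq \dd$ itself is a computation on the minimal parabolic $P_s \supset B$, where the convolution of the $!$- and $*$-extensions of $\LL_s$ is the monodromic incarnation of the triangle $j_!j^\ast \to \mathrm{id} \to i_\ast i^\ast$ on $U\backslash P_s/U$ and the pro-unipotent completion is precisely what forces the open-stratum term to vanish; this is carried out in \cite{bezrukavnikovKoszulDualityKacMoody2014}. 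For \ref{item:14} I would invoke the structure theory of the big tilting sheaf from \cite{bezrukavnikovKoszulDualityKacMoody2014}, \cite{bezrukavnikovTopologicalApproachSoergel2018}, according to which $\T_{w_0}$ is the unique indecomposable tilting object whose standard filtration contains $\DD_{w_0}$, and this filtration contains each $\DD_x$, $x \in W$, exactly once. By \ref{item:13} each $\DD_w$, $\NN_w$ is invertible for $\star$, so $\DD_w \star (-)$ is an exact autoequivalence; hence $\DD_w \star \T_{w_0}$ is perverse, indecomposable, and — convolving the standard filtration of $\T_{w_0}$ — carries a standard filtration in which (by a short Grothendieck-group computation, or by tracking the open stratum) $\DD_{w_0}$ occurs. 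The one step that is not formal is that $\DD_w \star \T_{w_0}$ also admits a costandard filtration, hence is tilting: convolution with a standard object does not preserve costandard filtrations in general, and one extracts the tilting property by a Krull–Schmidt analysis of $\T_s \star \T_{w_0}$ — a tilting object (convolution of tiltings is tilting) whose class is a known multiple of that of $\T_{w_0}$ — using the rank-one tilting triangles $\DD_s \to \T_s \to \dd$, $\dd \to \T_s \to \NN_s$, exactly as in \cite{bezrukavnikovKoszulDualityKacMoody2014}. Once $\DD_w \star \T_{w_0}$ is known to be an indecomposable tilting object whose filtration contains $\DD_{w_0}$, the uniqueness above forces $\DD_w \star \T_{w_0} \simeq \T_{w_0}$; the remaining three isomorphisms follow symmetrically, convolving on the other side and/or with $\NN_w$.

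The main obstacle is not any single geometric computation but the bookkeeping forced by the pro-completion $\mathcal{H}(G) \subset \pro\mathcal{H}'(G)$: one must carry the pro-unipotent (free-monodromic) structure on every object through all of the identifications above and verify its compatibility with the stratumwise geometry, as it is precisely this compatibility that upgrades the elementary statements about constructible sheaves on Bruhat cells to honest identities in $\mathcal{H}(G)$ and that pins down the normalizations making \ref{item:13} an equality rather than an equality up to shift. For \ref{item:14} there is the additional, already-noted subtlety that the tilting property of $\DD_w \star \T_{w_0}$ must be obtained from the fullness of its standard filtration and a Krull–Schmidt argument rather than deduced formally.
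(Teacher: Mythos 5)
The paper does not actually prove this proposition: its ``proof'' consists of citations to \cite{bezrukavnikovKoszulDualityKacMoody2014}[Lemma 4.3.3] for (a), (b), to \cite{bezrukavnikovTopologicalApproachSoergel2018}[Lemma 7.7] for (c), and to \cite{bezrukavnikovTopologicalApproachSoergel2018}[Proposition 7.10] for (d), and your sketch reconstructs essentially the arguments of those references — the Bruhat-cell multiplication isomorphism for (a), (b), reduction to a simple reflection for (c), and a Krull--Schmidt analysis of $\T_s\star\T_{w_0}$ via the rank-one tilting triangles for (d). One intermediate step in your treatment of (d) is a non sequitur: the fact that $\DD_w\star(-)$ is a triangulated autoequivalence does not make it t-exact, so it does not follow that $\DD_w\star\T_{w_0}$ is perverse; likewise, convolving the standard filtration of $\T_{w_0}$ with $\DD_w$ does not yield a standard filtration, since $\DD_w\star\DD_x$ is standard only when $l(wx)=l(w)+l(x)$. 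Both of these assertions are precisely what the induction on $l(w)$ through $\T_s\star\T_{w_0}$ (which you correctly identify as the non-formal step) is needed to establish, so that step must come first rather than serve as a patch; with that reordering your outline matches the cited proofs.
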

 \begin{proof}
   \ref{item:11} and \ref{item:12} are  \cite[Lemma 4.3.3]{bezrukavnikovKoszulDualityKacMoody2013}, \ref{item:13} is \cite[Lemma 7.7]{zbMATH07610537}, \ref{item:14} is \cite[Proposition 7.10]{zbMATH07610537}. 
 \end{proof}
Note that the equivalence of \cite{bezrukavnikovTwoGeometricRealizations2016} assigns to the structure sheaf $\cO_{\gstb}$ the free-monodromic tilting object $\T_{w_0}$ on the finite Hecke subcategory. It can be shown that the perfect derived category of $\gstb$ is generated, as an idempotent-complete triangulated category, by the two sided monoidal ideal generated by $\cO_{\gstb}$. This motivates the following definition. Let $\mathcal{H}^{\perf}(G)$ stand for the full triangulated subcategory of $\mathcal{H}(G)$ generated by the object $\T_{w_0}$. It is easy to deduce from Proposition \ref{braid_relations} that $\mathcal{H}^{\perf}(G)$ is a two-sided monoidal ideal in $\mathcal{H}(G)$.
 \begin{remark}
   By the results of \cite{bezrukavnikovKoszulDualityKacMoody2013}, we have an equivalence of categories $\mathcal{H}(G) \simeq \Ho^b(\Tilt^{\wedge}(G))$, where the category on the right-hand side is the bounded homotopy category of the full additive subcategory $\Tilt^{\wedge}(G)$ of $\mathcal{H}(G)$ consisting of free-monodromic tilting objects. Under this equivalence, $\mathcal{H}^{\perf}(G)$ corresponds to the subcategory of complexes whose terms are direct sums of copies of $\T_{w_0}$.  
 \end{remark}
 \subsection{Jucys--Murphy sheaves and the $\Rep(\bfT)$-action.}
 \label{sec:jucys-murphy-sheav}
 Let $G = \GL_n$ and write $\mathcal{H}_n = \mathcal{H}(G)$ and $\mathcal{H}_n^{\perf} = \mathcal{H}^{\perf}(G)$. Let $B(W)$ stand for the Artin-Tits braid group attached to the Coxeter group $W$. By the result of Rouquier \cite{rouquierCategorificationBraidGroups2004}, there are left and right actions of $B(W)$ on $\mathcal{H}_n$. That is, there is a monoidal functor from the monoidal category with the set of objects $B(W)$, identity morphisms only and monoidal structure given by multiplication (opposite multiplication for the right action) in $B(W)$, to the monoidal category of endofunctors of $\mathcal{H}_n$. The endofunctor $\Phi_l(\beta)$ (respectively, $\Phi_r(\beta)$) corresponding to $\beta \in B(W)$ is isomorphic to the functor of left (respectively, right) convolution with a complexes of sheaves $F_\beta \simeq \Phi_l(\beta)(\dd).$ Let $\tilde{w}$ stand for the standard lift of $w \in W$ to $B(W)$. Then we have $F_{\tilde{w}} \simeq \DD_w, F_{\tilde{w}^{-1}} \simeq \NN_{w^{-1}}$. Note that by Proposition \ref{braid_relations}, these objects satisfy braid relations under convolution.

Define the braids $j_k, k = 1, \dots, n$ by $j_1 = e, j_{k+1} = \tilde{s}_kj_k\tilde{s}_k$. The braids $j_k$ are called Jucys--Murphy braids, and it is well known that they generate a commutative subgroup of $B(W)$. We now return to the notations of the introduction, where $\bfG = \bfGL_n$ and $\bfT$ is a maximal torus of $\bfG$. For a weight $\boldsymbol\lambda$ we denote by $\boldsymbol\lambda$ the corresponding object in $\RepT$. The assignment
\[
  \boldsymbol\lambda = \sum a_i\bfe_i \mapsto \Phi_l(j_1^{a_1}\dots j_n^{a_n}),
\]
defines the left action of $\RepT$ on $\mathcal{H}_n$. It follows from the discussion above, that the action of $\boldsymbol\lambda$ is isomorphic to the left convolution with an object
\[
  \mathfrak{J}_{\boldsymbol\lambda} = F_{j_1}^{\star a_1}\star  \dots\star F_{j_n}^{\star a_n}.
\]
The right action is defined analogously. 

By Proposition \ref{braid_relations} \ref{item:14} this action preserves the subcategory $\mathcal{H}_n^{\perf}$.
\section{Construction of the $\RepG$-action.}

\subsection{Mellin transform.} We will use extensively some results of
\cite{gabberFaisceauxPerversLadiques1996}. Let $T$ be an algebraic torus of rank $r$ defined over $\kk$. Let $\pi^t_1(T)$ stand for the tame fundamental group of $T$ and $\pi_1^\ell(T)$ for its maximal pro-$\ell$ quotient. Let \[\mathfrak{R} = \Qlbar\otimes_{\Zl}\Zl[[t_1,\dots,t_r]].\] By Proposition 3.2.2 of \cite{gabberFaisceauxPerversLadiques1996} $\R$ is a regular Noetherian ring. Let \[\mathcal{C}(T)_\ell = \Spec\mathfrak{R}\] and let \[\mathcal{C}(T) = \bigsqcup_{\chi}\{\chi\} \times \mathcal{C}(T)_\ell,\] where $\chi$ runs through the characters $\chi:\pi^t_1(T) \to \Qlbar^{\times}$ of finite order prime to $\ell$.
Let $\mathbf{1}$ stand for the trivial character of $\pi_1^t(T)$. $\Qlbar$-points of $\cC(T)$, resp.\@ $\cC(T)_\ell$, are identified with characters $\pi_1^t(T) \to \Qlbar^{\times}$, resp.\@ with continuous characters $\pi_1^\ell(T) \to \Qlbar^{\times}$. The group $\mathcal{C}(T)(\Qlbar)$ acts on the scheme $\mathcal{C}(T)$ by automorphisms. Denote by $m_{\lambda}$ the automorphism corresponding to $\lambda \in \cC(T)(\Qlbar)$.

For a character $\lambda \in \cC(T)(\Qlbar)$ write $\mathcal{L}_\lambda$ for the corresponding local system on $T$ of rank 1. More generally, let $\mathfrak{m}$ be the augmentation ideal of the completed group algebra $\Zl[[\pi_1^\ell(T)]]$, let $\E_n$ be the local system corresponding to the representation $\Qlbar\otimes_{\Zl}\Zl[[\pi_1^{\ell}(T)]]/\mathfrak{m}^n$ and let
\[
  \E_n^{\lambda} = \mathcal{L}_\lambda\otimes \E_n.
\]
Note that $\mathcal{L}_\lambda = \E_1^{\lambda}.$

 Recall that in \cite{gabberFaisceauxPerversLadiques1996} the Mellin transform functor \[ \M = \M_*:D^b(T) \to D^b_{\coh}(\cC(T)),\] was defined, satisfying the following properties:
\begin{enumerate}[label=M\arabic*.,ref=M\arabic*]
\item $\M$ is conservative, i.e. $\M(\F) = 0$ for $\F \in D^b(T)$ implies $\F = 0$. \label{item:3}
\item $\M$ is $t$-exact with respect to the perverse $t$-structure on $D^b(T)$ and standard $t$-structure on $D^b_{\coh}(\cC(T))$. \label{item:4}
\item $\M$ is monoidal with respect to the $*$-convolution on $D^b(T)$ and derived tensor product on $D^b_{\coh}(\cC(T))$, i.e.
  \[
    \M(\F\star\mathcal{G}) =
\M(\F)\otimes^{\operatorname{L}}\M(\mathcal{G}).\] \label{item:5}
\item $\M$ satisfies
  \[\M(\mathcal{L}_{\lambda}\otimes\F) =
    m_{\lambda}^*\M(F).
  \] \label{item:6}
\item $\M$ restricts to an equivalence
  \[
    \M: D_{mon}^b(T) \to D^b_{\coh,f}(\cC(T)),
  \]
where $D_{mon}^b(T)$ stands for the subcategory of monodromic complexes on $T$ and $D^b_{\coh,f}(\cC(T))$ stands for the subcategory of complexes with finite support. \label{item:7}
\end{enumerate}

Let $\mathfrak{m}_0$ be the ideal of $\R$ generated by $t_1,\dots,t_r$. We regard $\R/\mathfrak{m}_0^n$ as a coherent sheaf on $\cC(T)$ set-theoretically supported on a point inside the connected component $\{\mathbf{1}\}\times\cC(T)_\ell.$ We record the following simple
\begin{proposition}
  We have an isomorphism
  \[
    \M(\E_n^{\lambda}[r]) = m_{\lambda}^*(\R/\mathfrak{m}_0^n).
  \]
\end{proposition}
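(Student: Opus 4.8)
The plan is to reduce the statement to the two defining properties of the Mellin transform that involve the specific local systems at hand, namely \ref{item:5} (monoidality) and \ref{item:6} (twisting by $\mathcal{L}_\lambda$ corresponds to pullback by $m_\lambda$), together with the base case $n=1$. First I would treat the case $\lambda = \mathbf{1}$, so that $\E_n^{\mathbf{1}} = \E_n$. Here the key observation is that the local system $\E_n$ is, by construction, the one attached to the finite-dimensional $\Zl[[\pi_1^\ell(T)]]$-module $\Qlbar \otimes_{\Zl} \Zl[[\pi_1^\ell(T)]]/\mathfrak{m}^n$; under the Mellin transform, the group algebra $\Zl[[\pi_1^\ell(T)]]$ corresponds to $\R$ (this is essentially the content of how $\cC(T)_\ell = \Spec\R$ is set up, with $\pi_1^\ell(T)$ generating the completed group algebra whose spectrum is $\cC(T)_\ell$), and the augmentation ideal $\mathfrak{m}$ corresponds to $\mathfrak{m}_0$. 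Thus $\M(\E_n[r])$ should be identified with the structure sheaf of the $n$-th infinitesimal neighbourhood of the trivial character $\mathbf{1}$ inside $\{\mathbf{1}\}\times\cC(T)_\ell$, i.e. with $\R/\mathfrak{m}_0^n$. The shift by $r = \dim T$ is exactly what is needed for $\M$ to be t-exact (property \ref{item:4}): the constant sheaf $\cc_T$ sits in perverse degree $-r$, so $\cc_T[r]$ is perverse and maps to a sheaf in degree $0$, and $\E_n[r]$ is a successive extension of copies of $\cc_T[r]$, hence also perverse, landing in the heart.

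For the general $\lambda$, I would simply invoke property \ref{item:6}: since $\E_n^\lambda = \mathcal{L}_\lambda \otimes \E_n$, we get
\[
  \M(\E_n^\lambda[r]) = \M(\mathcal{L}_\lambda \otimes \E_n[r]) = m_\lambda^*\M(\E_n[r]) = m_\lambda^*(\R/\mathfrak{m}_0^n),
\]
which is precisely the claimed formula. So the only real content is the case $\lambda = \mathbf{1}$.

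To nail down the $\lambda = \mathbf{1}$ case cleanly, I would argue by induction on $n$ using the short exact sequence of $\Zl[[\pi_1^\ell(T)]]$-modules $0 \to \mathfrak{m}^{n}/\mathfrak{m}^{n+1} \to \Zl[[\pi_1^\ell(T)]]/\mathfrak{m}^{n+1} \to \Zl[[\pi_1^\ell(T)]]/\mathfrak{m}^{n} \to 0$ (tensored with $\Qlbar$), which gives a triangle $\E_n' \to \E_{n+1} \to \E_n \to$ in $D^b(T)$ with $\E_n'$ a direct sum of copies of $\cc_T$ (since $\pi_1^\ell(T)$ acts trivially on $\mathfrak{m}^n/\mathfrak{m}^{n+1}$). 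Applying the monoidal, t-exact functor $\M(-[r])$ and using the base case $\M(\cc_T[r]) = \R/\mathfrak{m}_0$ (which follows from $\M$ being monoidal with unit the skyscraper $\R/\mathfrak{m}_0$ at $\mathbf{1}$, or directly from the known computation for the constant sheaf), one matches the resulting short exact sequences of coherent sheaves with $0 \to \mathfrak{m}_0^n/\mathfrak{m}_0^{n+1} \to \R/\mathfrak{m}_0^{n+1} \to \R/\mathfrak{m}_0^{n} \to 0$; compatibility of the connecting maps and an identification of generators upgrades this to the required isomorphism. Alternatively, and more slickly, one can note that $\E_n = \cc_T \star \cdots$ — more precisely that convolution of $\mathcal{L}$-type objects corresponds under $\M$ to tensor product, and $\E_n$ is characterized by a universal property as the "$n$-th order jet" object — so that $\M(\E_n[r])$ is the $n$-th order jet of $\M(\cc_T[r]) = \R/\mathfrak{m}_0$ at $\mathbf{1}$, namely $\R/\mathfrak{m}_0^n$.

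The main obstacle I anticipate is purely bookkeeping: making the identification $\M(\cc_T[r]) \cong \R/\mathfrak{m}_0$ and, more importantly, the identification of the completed group algebra $\Zl[[\pi_1^\ell(T)]]$ with $\R$ and of $\mathfrak{m}$ with $\mathfrak{m}_0$ fully canonical and compatible with the monoidal structures, so that the inductive comparison of short exact sequences actually produces an isomorphism of coherent sheaves rather than merely an abstract agreement of composition factors. Once the $n=1$ case and this dictionary between group-algebra data and $\R$-data are pinned down, everything else is a formal consequence of \ref{item:5} and \ref{item:6}; there is no hard geometry here, only the careful matching of the two descriptions of the "coordinate ring near $\mathbf{1}$" furnished by \cite{gabberFaisceauxPerversLadiques1996}.
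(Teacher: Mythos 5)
Your proof is correct and follows the route the paper intends: the paper records this proposition without proof, treating it as immediate from the identification of $\R$ with $\Qlbar\otimes_{\Zl}\Zl[[\pi_1^{\ell}(T)]]$ (so that $\mathfrak{m}_0$ is the augmentation ideal and $\E_n[r]$ is the perverse local system attached to the module $\R/\mathfrak{m}_0^n$), combined with property \ref{item:6} to pass from $\mathbf{1}$ to general $\lambda$ --- exactly your reduction. The one slip is your first parenthetical justification of $\M(\cc_T[r])=\R/\mathfrak{m}_0$: the monoidal unit of $(D^b(T),\star)$ is the skyscraper at $e\in T$, whose Mellin transform is the structure sheaf $\cO_{\cC(T)}$, not the skyscraper $\R/\mathfrak{m}_0$; the correct justification is the one you give second, the direct computation for the constant sheaf (using that $\mathrm{H}^{\bullet}(T,\mathcal{L}_{\chi})=0$ for $\chi\neq\mathbf{1}$), after which your induction on $n$ (or, more directly, the fact that $\M$ restricted to unipotently monodromic local systems is the tautological equivalence of property \ref{item:7}) finishes the argument.
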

Let $\mathbf{N}$ be the category whose objects are non-negative integers, with $\Hom(i,j)$ having a single arrow if and only if $i \geq j$. Let $\mathcal{A}$ be an additive category and let $\F:\mathbf{N}\to\mathcal{A}$ be an inverse system in $\mathcal{A}$. Such a system is called essentially zero (short for ``essentially constant equal to zero'') if, for any $m \in \mathbb{N}$, there is $n > m$ such that $\F(n \rightarrow m) = 0$.

We will need the following
\begin{proposition}
 \label{sec:mellin-transform} For $\F \in D^b(T)$ the following are equivalent:
  \begin{enumerate}[label=\alph*),ref=(\alph*)]
  \item\label{item:20} $\F \in {{}^pD^{\geq 0}}$,
  \item\label{item:21} The system of $^p\mathcal{H}^i(\F\star\E_n^{\lambda}[r])$ is essentially zero for all $\lambda \in \mathcal{C}(T)(\Qlbar)$ and all $i < 0$.
  \end{enumerate}
\end{proposition}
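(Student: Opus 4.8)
The plan is to apply the Mellin transform $\M$ of \cite{gabberFaisceauxPerversLadiques1996} in order to convert the statement into a question of commutative algebra over the ring $\R$, exploiting that $\M$ is t-exact (\ref{item:4}), conservative (\ref{item:3}) and monoidal (\ref{item:5}), together with the computation $\M(\E_n^\lambda[r]) \cong m_\lambda^*(\R/\mathfrak m_0^n)$ from the Proposition just above.

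By t-exactness of $\M$, condition \ref{item:20} is equivalent to $\M\F \in D^{\ge 0}_{\coh}(\cC(T))$, where the nontrivial implication uses conservativity of $\M$ on perverse sheaves. Next, \ref{item:5} and the preceding Proposition give a natural (in $n$) isomorphism $\M(\F\star\E_n^\lambda[r]) \cong \M\F\otimes^{\mathrm L} m_\lambda^*(\R/\mathfrak m_0^n)$. Since $\M$ restricted to $P(T)$ is exact and conservative, hence faithful, it reflects vanishing of morphisms and therefore detects essentially-zero inverse systems; so \ref{item:21} is equivalent to the assertion that for every $\lambda \in \cC(T)(\Qlbar)$ and every $i<0$ the system $\{\mathcal H^i(\M\F\otimes^{\mathrm L}m_\lambda^*(\R/\mathfrak m_0^n))\}_n$ is essentially zero. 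As $m_\lambda^*(\R/\mathfrak m_0^n)$ is a coherent sheaf set-theoretically supported at the closed point $\lambda\in\cC(T)$ and isomorphic, over the local ring $R_\lambda:=\cO_{\cC(T),\lambda}$, to $R_\lambda/\mathfrak m_\lambda^n$, this derived tensor product is supported at $\lambda$ and identifies with $(\M\F)_\lambda\otimes^{\mathrm L}_{R_\lambda}R_\lambda/\mathfrak m_\lambda^n$. I would then invoke, from \cite{gabberFaisceauxPerversLadiques1996}, that the $\Qlbar$-points are exactly the closed points of $\cC(T)$ and are dense enough that a coherent sheaf on $\cC(T)$ vanishes once all its stalks at $\Qlbar$-points do; this reduces the whole statement to the following assertion about a Noetherian local ring $(R,\mathfrak m)$ and a bounded complex $M$ of $R$-modules with finitely generated cohomology: for each $i<0$, the system $\{H^i(M\otimes^{\mathrm L}_R R/\mathfrak m^n)\}_n$ is essentially zero if and only if $H^i(M)=0$.

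For this local claim the key input is that for a finitely generated $R$-module $N$ and every $k\ge 1$ the system $\{\operatorname{Tor}^R_k(N,R/\mathfrak m^n)\}_n$ is essentially zero; I would prove this by induction on $k$ via dimension shifting, the case $k=1$ being the Artin--Rees lemma applied to $\operatorname{Tor}_1(N,R/\mathfrak m^n)\cong (K\cap\mathfrak m^nR^a)/\mathfrak m^n K$, where $R^b\to R^a\to N\to 0$ is a finite presentation with kernel $K$. Granting this, the forward implication follows by filtering $M\in D^{\ge 0}$ by its truncations: $H^i(M\otimes^{\mathrm L}R/\mathfrak m^n)$ then carries a finite filtration whose subquotients are the modules $\operatorname{Tor}_{p-i}(H^p(M),R/\mathfrak m^n)$ with $p\ge 0$, each of which gives an essentially-zero system since $p-i\ge 1$ when $i<0$. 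For the converse, let $i_0<0$ be minimal with $H^{i_0}(M)\ne 0$; from the triangle $H^{i_0}(M)[-i_0]\to M\to\tau^{>i_0}M$ and its long exact $\operatorname{Tor}$-sequence, the two systems $\{H^{i_0-1}(\tau^{>i_0}M\otimes^{\mathrm L}R/\mathfrak m^n)\}_n$ and $\{H^{i_0}(\tau^{>i_0}M\otimes^{\mathrm L}R/\mathfrak m^n)\}_n$ are essentially zero (they involve only $\operatorname{Tor}_{\ge 1}$ of cohomology of $M$ sitting in degrees $>i_0$), so $\{H^{i_0}(M\otimes^{\mathrm L}R/\mathfrak m^n)\}_n$ is pro-isomorphic to $\{H^{i_0}(M)/\mathfrak m^n H^{i_0}(M)\}_n$; the latter has surjective transition maps and, by Nakayama applied to the finitely generated module $H^{i_0}(M)\ne 0$, nonzero terms, hence is not essentially zero, contradicting \ref{item:21} since $i_0<0$. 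I expect the main technical obstacle to lie in the bookkeeping in the category of pro-objects needed for this last pro-isomorphism (and in pinning down the precise detection-on-$\Qlbar$-points statement for $\cC(T)$); the remaining steps are formal or standard.
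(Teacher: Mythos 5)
Your proposal is correct and follows the same global strategy as the paper (transport everything through the Mellin transform using its exactness, conservativity and monoidality, then reduce to commutative algebra over $\R$), but the commutative-algebra core is handled by a genuinely different key lemma. The paper exploits regularity of $\R$ to get a finite projective resolution of $\M(\F)$ over the relevant component, deduces that each $\mathcal{H}^i(\M(\F)\otimes^{\operatorname{L}}\R/\mathfrak{m}_0^n)$ is finite-dimensional over $\Qlbar$, and then runs the $\lim$--$\operatorname{R}^1\lim$ sequence for derived completion together with t-exactness of completion on finite modules; essential vanishing is extracted at the end via Mittag--Leffler for finite-dimensional vector spaces. You instead prove directly that $\{\operatorname{Tor}_k^R(N,R/\mathfrak{m}^n)\}_n$ is pro-zero for $k\geq 1$ via Artin--Rees and dimension shifting, then get the forward implication from the hyper-Tor filtration and the converse from Nakayama applied to $H^{i_0}(M)/\mathfrak{m}^nH^{i_0}(M)$. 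Your route is more elementary and more general: it needs only that the local rings are Noetherian (no regularity, no finiteness over $\Qlbar$, no derived completion), and it makes the pro-object bookkeeping explicit rather than passing through limits. Both arguments share the same slightly delicate final step --- passing from vanishing detected at $\Qlbar$-points to vanishing of the coherent sheaf on all of $\cC(T)$ --- which the paper dispatches by citing Noetherianity and Nakayama and which you correctly flag as the point where the structure of $\R$ from Gabber--Loeser must be invoked; this is not a gap relative to the paper's own level of detail. The one step worth writing out carefully is the transfer of ``essentially zero'' across $\M$, which you justify correctly by faithfulness of the exact conservative functor $\M$ on $P(T)$.
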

\begin{proof}
  \ref{item:20} $\implies$ \ref{item:21}. Since the functor $(-)\star\E_n^{\lambda}[r]$ is $t$-exact from the right, it is enough to prove the statement for $\F \in P(T)$. We may also assume $\lambda = \mathbf{1}$. By properties \ref{item:4} and \ref{item:5} we have
  \[
    \M(^p\mathcal{H}^i(\F\star\mathcal{E}_n[r])) = \mathcal{H}^i(\M(\F)\otimes^{\operatorname{L}}(\R/\mathfrak{m}_0^n)).
  \]
  Note that since $\R$ is regular Noetherian and the automorphism group $\cC(T)_\ell(\Qlbar)$ acts transitively on the set of $\Qlbar$-points, $\cC(T)_\ell$ has finite global dimension (see \cite[\href{https://stacks.math.columbia.edu/tag/00OE}{Tag 00OE}]{stacks-project}), and $\M(\F)|_{\mathbf{1}\times\cC(T)_{\ell}}$ has a finite projective resolution. It follows that the cohomology $\R$-modules $\mathcal{H}^i(\M(\F)\otimes^{\operatorname{L}}(\R/\mathfrak{m}_0^n))$ are finite-dimensional over $\Qlbar$. From the short exact sequences
  \begin{multline*}
    0\to \operatorname{R}^1\lim\mathcal{H}^{i-1}(\M(\F)\otimes^{\operatorname{L}}\R/\mathfrak{m}_0^n) \to \\
    \to \mathcal{H}^{i}(\M(\F)^{\wedge}) \to \operatorname{lim}\mathcal{H}^{i}(\M(\F)\otimes^{\operatorname{L}}\R/\mathfrak{m}_0^n) \to 0,
  \end{multline*}
  where $\M(\F)^{\wedge}$ stands for the derived completion with respect to $\mathfrak{m}_0$, we get, for $i < 0$,
  \[
    \mathcal{H}^{i}(\M(\F)^{\wedge}) \simeq \operatorname{lim}\mathcal{H}^{i}(\M(\F)\otimes^{\operatorname{L}}\R/\mathfrak{m}_0^n),
  \]
  since $\operatorname{R}^1\lim$ vanishes for the inverse system of finite-dimensional vector spaces. From $t$-exactness of derived completion on finite modules we get
\[
\operatorname{lim}\mathcal{H}^{i}(\M(\F)\otimes^{\operatorname{L}}\R/\mathfrak{m}_0^n) = 0,\text{ for } i < 0.
\]
Again, since the system in question is of finite-dimensional vector spaces, this implies that it is essentially zero. By property \ref{item:7}, we get that the system of $^p\mathcal{H}^i(\F\star\E_n^{\lambda}[r])$ is essentially zero.

 \ref{item:21} $\implies$ \ref{item:20}. Similarly to the above, we get that, for the restriction of $\M(\F)$ to every connected component, and for any maximal ideal $\mathfrak{m}$ corresponding to a $\Qlbar$-point in this component, the $\mathfrak{m}$-adic completion of $\mathcal{H}^i(\M(\F))$ is 0 for $i < 0$. Since $\R$ is Noetherian, this implies that $\mathcal{H}^i(\M(\F))/\mathfrak{m} = 0$ for all maximal $\mathfrak{m}$, and so $\mathcal{H}^i(\M(\F)) = 0$ by Nakayama's lemma. The result now follows from the property \ref{item:3}.
\end{proof}
\begin{proposition}
\label{sec:mellin-transform-2}
  If for $\F \in P(T)$ we have $^p\mathcal{H}^0(\F \star \mathcal{L}_{\lambda}[r]) = 0$ for all $\lambda \in \mathcal{C}(T)(\Qlbar)$, then $\F = 0$.
\end{proposition}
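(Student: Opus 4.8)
The plan is to transport the hypothesis through the Mellin transform $\M$ and reduce the statement to Nakayama's lemma on $\cC(T)$, following the pattern of the proof of Proposition~\ref{sec:mellin-transform}. Since $\F \in P(T)$ and $\M$ is $t$-exact (\ref{item:4}), $M := \M(\F)$ is a coherent sheaf on $\cC(T)$ concentrated in cohomological degree $0$. Using monoidality of $\M$ (\ref{item:5}) together with the Proposition above (the case $n=1$, so that $\M(\mathcal{L}_\lambda[r]) = m_{\lambda}^{*}(\R/\mathfrak{m}_0)$, since $\mathcal{L}_\lambda = \E_1^{\lambda}$), we obtain
\[
  \M(\F \star \mathcal{L}_\lambda[r]) = \M(\F)\otimes^{\operatorname{L}}\M(\mathcal{L}_\lambda[r]) = M \otimes^{\operatorname{L}} m_{\lambda}^{*}(\R/\mathfrak{m}_0).
\]
Here $\R/\mathfrak{m}_0$ is the residue field at the point $\bfone\in\{\bfone\}\times\cC(T)_\ell$, regarded as a skyscraper sheaf, so $m_{\lambda}^{*}(\R/\mathfrak{m}_0)$ is the skyscraper at the $\Qlbar$-point $m_{\lambda}^{-1}(\bfone)$; as $\lambda$ ranges over $\cC(T)(\Qlbar)$ this point ranges over all $\Qlbar$-points of $\cC(T)$. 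Since $M$ lies in degree $0$, the derived tensor product of $M$ with the skyscraper at a closed point $\mu$ lives in non-positive cohomological degrees, with $\mathcal{H}^0$ equal to the fiber $M_\mu/\mathfrak{m}_\mu M_\mu$.

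Now, by $t$-exactness of $\M$, the hypothesis $^p\mathcal{H}^0(\F\star\mathcal{L}_\lambda[r]) = 0$ gives $\mathcal{H}^0(\M(\F\star\mathcal{L}_\lambda[r])) = 0$ for all $\lambda$. Combined with the previous paragraph, the fiber of the coherent sheaf $M$ vanishes at every $\Qlbar$-point of $\cC(T)$. Exactly as at the end of the proof of Proposition~\ref{sec:mellin-transform}, this forces $M = 0$: Nakayama's lemma (applicable since $M$ is coherent and $\cC(T)$ Noetherian) shows that the stalk of $M$ vanishes at every $\Qlbar$-point, and every nonempty closed subset of $\cC(T)$ contains a $\Qlbar$-point, so $\operatorname{Supp}(M) = \emptyset$. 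Finally $\F = 0$ by conservativity of $\M$ (\ref{item:3}).

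The only substantive point is the last one --- that a coherent sheaf on $\cC(T)$ whose fibers at all $\Qlbar$-points vanish must be zero --- which is imported verbatim from the proof of Proposition~\ref{sec:mellin-transform}, where it rests on the structure of $\R = \Qlbar\otimes_{\Zl}\Zl[[t_1,\dots,t_r]]$ established in \cite{gabberFaisceauxPerversLadiques1996}. Everything else, namely the identification of $\M(\mathcal{L}_\lambda[r])$ with a skyscraper and the bookkeeping for the derived tensor product, is routine.
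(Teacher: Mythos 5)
Your proof is correct and follows the same route as the paper's, which simply refers back to the argument of Proposition~\ref{sec:mellin-transform}: push the hypothesis through $\M$ using t-exactness and monoidality, identify $\M(\mathcal{L}_\lambda[r])$ with a skyscraper so that the hypothesis becomes vanishing of the underived fibers of $\M(\F)$ at all $\Qlbar$-points, conclude $\M(\F)=0$ by Nakayama and the structure of $\R$, and finish by conservativity \ref{item:3}. Your write-up just makes explicit the bookkeeping that the paper leaves implicit.
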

\begin{proof}
  Similarly to the proof of Proposition \ref{sec:mellin-transform}, we have that (underived) fibers of $\M(\F)$ at all geometric points vanish, hence so does $\M(\F)$. The result now follows from the property \ref{item:3}.
\end{proof}
We will need the following variation of the above propositions.

Let $S \subset \ct$ be a finite subset. We say that $\F \in D^b(T)$ is $S$-monodromic, if it is a direct sum of $\lambda$-monodromic complexes with $\lambda \in S$ (note that there are no non-zero morphisms between monodromic complexes with different monodromies).

\begin{proposition}
  \label{sec:mellin-transform-1}
  Fix a complex $\F \in D^b(T)$ and a finite subset $S \subset \ct$.
\begin{enumerate}[label=\alph*),ref=(\alph*)]
\item \label{sec:mellin-transform-3} The system of $^p\mathcal{H}^i(\F\star\E_n^{\lambda}[r])$ is essentially zero for all $\lambda \in \mathcal{C}(T)(\Qlbar)$ with $\lambda \notin S$, and all $i$, if and only if ${}^p\mathcal{H}^i(\F)$ are shifted $S$-monodromic local systems on $T$.
\item\label{sec:mellin-transform-4} If for $\F \in P(T)$ we have $^p\mathcal{H}^0(\F \star \mathcal{L}_{\lambda}[r]) = 0$ for all $\lambda \in \mathcal{C}(T)(\Qlbar)$ with $\lambda \notin S$, then $\F$ is a shifted $S$-monodromic local system on $T$.
\end{enumerate}
\end{proposition}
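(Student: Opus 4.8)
The plan is to transport the whole statement through the Mellin transform $\M$ and turn it into an assertion about the support of the coherent complex $\M(\F)$ on $\cC(T)$, in the spirit of the proofs of Propositions~\ref{sec:mellin-transform} and~\ref{sec:mellin-transform-2}. The inputs are the dictionary \ref{item:3}--\ref{item:7} together with the identification $\M(\E_n^\lambda[r]) = m_\lambda^*(\R/\mathfrak m_0^n)$, the latter being the structure sheaf of the $n$-th infinitesimal neighbourhood of the closed point $\lambda$, hence set-theoretically supported at $\lambda$. Two standard refinements are used freely: by \ref{item:6} and \ref{item:7} an object of $D^b(T)$ is $\lambda$-monodromic exactly when $\M$ sends it to a complex supported at the point $\lambda$, so $S$-monodromic complexes correspond to complexes supported on the finite set $S$; and, since monodromic complexes form a thick subcategory with locally constant cohomology sheaves and $T$ is smooth of pure dimension $r$, a complex $\F\in D^b(T)$ is $S$-monodromic if and only if every ${}^p\mathcal H^i(\F)$ is a local system with monodromy in $S$ placed in perverse degree (that is, of the form $L[r]$).

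For the ``if'' direction of \ref{sec:mellin-transform-3}: if every ${}^p\mathcal H^i(\F)$ is a shifted $S$-monodromic local system then $\F$ is $S$-monodromic, so $\M(\F)$ is supported on $S$; for $\lambda\notin S$ the sheaves $\M(\F)$ and $m_\lambda^*(\R/\mathfrak m_0^n)$ then have disjoint supports, so $\M(\F)\otimes^{\operatorname{L}} m_\lambda^*(\R/\mathfrak m_0^n)=0$ (all the pertinent $\operatorname{Tor}$'s vanish after localising at any prime), and by \ref{item:5} and conservativity \ref{item:3} we get $\F\star\E_n^\lambda[r]=0$; thus the system in \ref{sec:mellin-transform-3} is identically zero.

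For the ``only if'' direction of \ref{sec:mellin-transform-3} and for \ref{sec:mellin-transform-4}, fix a closed point $\lambda\notin S$. By \ref{item:4}, \ref{item:5}, \ref{item:6} one has $\M({}^p\mathcal H^i(\F\star\E_n^\lambda[r]))=\mathcal H^i(\M(\F)\otimes^{\operatorname{L}} m_\lambda^*(\R/\mathfrak m_0^n))$, and conservativity identifies the essential vanishing of the two sides. Running the argument of the proof of Proposition~\ref{sec:mellin-transform} — now with the hypothesis available in all degrees, so that the $\operatorname{R}^1\lim$-terms in the relevant Milnor sequences vanish because the inverse systems are essentially zero (no appeal to finite global dimension is needed) — one concludes that the derived $\mathfrak m_\lambda$-adic completion of $\M(\F)$ vanishes, hence $\mathcal H^i(\M(\F))_{\mathfrak m_\lambda}=0$ for all $i$ (using that $\mathfrak m_\lambda$-adic completion is exact on finitely generated $\R$-modules and reflects vanishing at $\mathfrak m_\lambda$), i.e. $\lambda\notin\operatorname{Supp}\M(\F)$. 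For \ref{sec:mellin-transform-4}: $\F$ being perverse, $\M(\F)$ is a single coherent sheaf in degree $0$ by \ref{item:4}; since $\mathcal L_\lambda=\E_1^\lambda$, the hypothesis says exactly that the fibre $\M(\F)\otimes\kappa(\lambda)$ vanishes for every closed $\lambda\notin S$, so again $\lambda\notin\operatorname{Supp}\M(\F)$ by Nakayama.

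In all cases we have shown $\operatorname{Supp}\M(\F)$ is a closed subset of $\cC(T)$ meeting no closed point outside the finite set $S$. \textbf{The main obstacle} is upgrading this to $\operatorname{Supp}\M(\F)\subseteq S$: this uses that $\cC(T)_\ell=\Spec\R$ is a Jacobson scheme, so that closed points are dense in every closed subscheme — a property of $\R=\Qlbar\otimes_{\Zl}\Zl[[t_1,\dots,t_r]]$ following from the analysis of $\cC(T)$ in \cite{gabberFaisceauxPerversLadiques1996} (morally, inverting $\ell$ turns the complete local ring $\Zl[[t_1,\dots,t_r]]$ into one with abundantly many closed points). Granting this, $\M(\F)$ vanishes on every component of $\cC(T)$ disjoint from $S$ and has support inside $S$ on the finitely many remaining components, hence $\M(\F)$ has finite support contained in $S$; so $\M(\F)$ decomposes as $\bigoplus_{\lambda\in S}\M(\F)_\lambda$, and by \ref{item:7} $\F$ is $S$-monodromic. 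Therefore every ${}^p\mathcal H^i(\F)$ is a shifted $S$-monodromic local system, which proves \ref{sec:mellin-transform-3}; and if $\F$ is perverse then $\F=L[r]$ for an $S$-monodromic local system $L$, which proves \ref{sec:mellin-transform-4}.
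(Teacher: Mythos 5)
Your proposal is correct and follows essentially the same route as the paper: both arguments push the hypotheses through the Mellin transform, reuse the completion/Nakayama arguments of Propositions \ref{sec:mellin-transform} and \ref{sec:mellin-transform-2} to show $\operatorname{Supp}\M(\F)\subseteq S$, and conclude via property \ref{item:7}. The only difference is that you make explicit (via the Jacobson property of $\R$ and the identification of closed points with $\Qlbar$-points) a step the paper leaves implicit, and you also record the easy converse direction of part \ref{sec:mellin-transform-3}; neither changes the substance of the argument.
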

\begin{proof}
  It follows from the proof of Proposition \ref{sec:mellin-transform} for part \ref{sec:mellin-transform-3} and Proposition \ref{sec:mellin-transform-2} for part \ref{sec:mellin-transform-4} that $\M(\F)$ is supported at $S \subset \mathcal{C}(T)(\Qlbar)$. The result now follows from property \ref{item:7}. 
\end{proof}
\subsection{Braided structure and the twist on the reductive group.}
\label{sec:conv-reduct-group}
Here we recall the necessary constructions from \cite[Appendix B]{boyarchenkoCharacterSheavesUnipotent2013} and some notations from the introduction. 

Let $G$ be a connected reductive group defined over $\kk$. The category $D^b_G(G),$ where $G$ acts on itself by conjugation, is braided monoidal. We denote its braided structure by \[\beta_{A,B}:A\star B \to B \star A.\]

We also consider the ``underived'' convolution $\star^0$ on $P_G(G)$, defined as
\[
  A\star^0 B = {}^p\mathcal{H}^0(A\star B).
\]
This equips $P_G(G)$ with the structure of a braided monoidal category.

For any $M \in D^b_G(G)$ we have an isomorphism $\theta_M:M\to M$ called a twist (see ibid). It satisfies
\begin{equation}
  \label{eq:4} \theta_{A\star B}=\beta_{B,A}\circ\beta_{A,B}\circ (\theta_A\star\theta_B)
\end{equation}
Let $Z_G(g)$ stand for the centralizer of a point $g\in G(\kk)$. Since $M$ is equivariant with respect to the adjoint action, the stalk $M_g$ of $M$ at $g$ has an action of $Z_G(g)$. It is known that the action of $\theta_M$ on $M_g$ can be identified with the action of $g \in Z_G(g)(\kk).$

\begin{proposition}
  \label{sec:twist-cor} Assume that any $g \in G$ lies in the neutral connected component $Z_G(g)^0\subset Z_G(g)$. Then $P_G(G)$ is symmetric, meaning that
  $$
  \beta_{A,B}\circ \beta_{B,A} = Id_{B \star^0 A}
  $$
  for all $A, B \in P_G(G)$.
\end{proposition}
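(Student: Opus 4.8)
The plan is to reduce the symmetry statement to a pointwise computation on stalks, using the formula \eqref{eq:4} relating the twist $\theta$ to the double braiding. First I would recall from \cite{boyarchenkoCharacterSheavesUnipotent2013} that for $M \in D^b_G(G)$ the twist $\theta_M$ acts on the stalk at $g \in G$ by the monodromy coming from conjugation by $g$, viewed as an element of the centralizer $Z_G(g)$ acting on the $G$-equivariant sheaf $M$; concretely, the action factors through the component group $\pi_0(Z_G(g))$, since the neutral component $Z_G(g)^0$ acts trivially on an equivariant object by connectedness. Under the hypothesis that $Z_G(g) = Z_G(g)^0$ for every $g$, the component group is trivial, so $\theta_M = \id_M$ for every $M \in D^b_G(G)$, and in particular for every $M \in P_G(G)$.

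Next I would combine this with \eqref{eq:4}: for $A, B \in P_G(G)$ we have, in $D^b_G(G)$, the identity $\theta_{A \star B} = \beta_{A,B} \circ \beta_{B,A} \circ (\theta_A \star \theta_B)$. Since $\theta_A = \id_A$ and $\theta_B = \id_B$, also $\theta_A \star \theta_B = \id_{A \star B}$, and $\theta_{A \star B} = \id_{A \star B}$ by the previous paragraph (as $A \star B \in D^b_G(G)$). Hence $\beta_{A,B} \circ \beta_{B,A} = \id_{A \star B}$ as an automorphism of $A \star B$ in $D^b_G(G)$. The last step is to pass from $A \star B$ to $A \star^0 B = \pH^0(A \star B)$: since the braiding $\beta$ is a natural transformation of functors on $D^b_G(G)$, it is compatible with perverse truncation, so $\beta_{A,B}$ and $\beta_{B,A}$ induce on $\pH^0$ the braiding of the monoidal category $(P_G(G), \star^0)$; applying $\pH^0$ to the identity $\beta_{A,B}\circ\beta_{B,A} = \id_{A\star B}$ gives $\beta_{A,B}\circ\beta_{B,A} = \id_{B \star^0 A}$ in $P_G(G)$, which is exactly the claim.

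The only genuinely delicate point is the first one: making precise the statement that $\theta_M$ acts on stalks via $\pi_0(Z_G(g))$ and hence is the identity when all centralizers are connected. This is essentially the content of the stalk description of the twist quoted just before the proposition ("on the stalk of $M$ at a point $g \in G$ it acts by the conjugation by $g \in Z_G(g)$"), together with the standard fact that a connected group acting on a $G$-equivariant complex acts trivially on stalks; so for the write-up I would simply cite \cite{boyarchenkoCharacterSheavesUnipotent2013} for the stalk formula and note that under the hypothesis the relevant automorphism is trivial. Everything else — the manipulation of \eqref{eq:4} and the compatibility of $\beta$ with $\pH^0$ — is formal, so I expect no real obstacle there; the proof is short modulo the cited input on the twist.
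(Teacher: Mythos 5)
There is a genuine gap at the first step. You assert that, because the twist acts on each stalk via conjugation by $g\in Z_G(g)$ and $Z_G(g)$ is connected, one gets $\theta_M=\id_M$ for \emph{every} $M\in D^b_G(G)$. What the stalk description gives is only that $i_g^*(\theta_M)$ is the identity for all $g$, and an endomorphism of an object of the derived category is not determined by its action on stalks: for a complex with cohomology in several degrees, $\theta_M$ can differ from the identity by a ``strictly upper triangular'' piece even though it is the identity on every cohomology sheaf. Indeed, if your claim were true, then \eqref{eq:4} would make the braiding on all of $D^b_G(G)$ symmetric whenever centralizers are connected, contradicting the paper's explicit statements that this braiding is almost never symmetric (even for commutative $G$) and that in this situation one only gets that $\theta_A-1$ is \emph{nilpotent} for general $A\in D^b_G(G)$ (Remark \ref{sec:conj-regard-symm-2}); this nilpotence is precisely why Conjecture \ref{sec:conj-regard-symm-1} is nontrivial. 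The implication ``identity on stalks $\Rightarrow$ identity'' is only valid for \emph{perverse} objects (one argues with the image of $\theta_M-\id$ in the abelian category and its support), which is exactly how far the paper's proof goes. Consequently your intermediate identity $\beta_{A,B}\circ\beta_{B,A}=\id_{A\star B}$ in $D^b_G(G)$ is unjustified (and false in general), since $A\star B$ need not be perverse.

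The repair is short and is what the paper does: establish $\theta_P=\id_P$ only for perverse $P$, and then apply $\pH^0$ directly to \eqref{eq:4} rather than to its (false) derived-level consequence. Since $\theta$ is a natural endomorphism of the identity functor, $\pH^0(\theta_{A\star B})=\theta_{A\star^0 B}$, and $A\star^0 B$ is perverse, so this is the identity; likewise $\pH^0(\theta_A\star\theta_B)=\id$. Hence $\pH^0(\beta_{A,B}\circ\beta_{B,A})=\id_{A\star^0 B}$, which is the assertion for $(P_G(G),\star^0)$. Your final truncation step is fine; it is the passage through the untruncated identity that must be removed.
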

\begin{proof}
  Since any $g \in G$ lies in $Z_G(g)^0$, automorphism $\theta$ acts by the identity on stalks of any equivariant complex $\F \in D^b_G(G)$. This implies that $\theta$ is the identity on any perverse sheaf, which, by \eqref{eq:4}, gives the result.
\end{proof}
\begin{remark}
  \label{sec:braid-struct-twist}
Note that in \cite{tolmachovFunctorAffineFinite2018} the braided structure on $D^b_{\GL_n}(\GL_n)$ was mistakenly treated as symmetric. The current, more technically involved, paper, thus, closes the gap in \cite{tolmachovFunctorAffineFinite2018}.  We expect, however, that the construction in \cite{tolmachovFunctorAffineFinite2018} can be fixed also by modifying the braided structure on the derived category $D^b_{\GL_n}(\GL_n)$. See Subsection \ref{sec:conj-regard-symm} below.
\end{remark} 
\subsubsection{Conjecture regarding the symmetric structure on the derived category.}
\label{sec:conj-regard-symm} As was mentioned in the introduction, the category $D^b_G(G)$ for an algebraic group $G$ may fail to be symmetric braided even if $G$ itself is commutative.

Let $(\mathcal{C}, \star,\beta)$ be any braided monoidal category that is $k$-linear over a field $k$ of characteristic 0. Under certain conditions on the braided monoidal category $\mathcal{C}$, the techniques of \cite{drinfeldQuasitriangularQuasiHopfAlgebras1991} (see also \cite{etingofQuasisymmetricUnipotentTensor2009}) can be used to modify the braided structure on $\mathcal{C}$ to make it symmetric. For a triple of objects $A,B,C \in \mathcal{C}$ consider the following endomorphisms of $A \star B \star C$:
\begin{align*}
  \nu_{A,B} &= (\beta_{B,A}\circ\beta_{A,B})\star\id_C-\id_{A\star B \star C},\\
  \nu_{B,C} &=\id_A\star\, (\beta_{C,B}\circ\beta_{B,C})-\id_{A\star B \star C}.
\end{align*}
We say that the braided structure on $\mathcal{C}$ is unipotent if the subalgebra in $\operatorname{End}_{\mathcal{C}}(A\star B\star C)$ generated by $\nu_{A,B}$ and $\nu_{B,C}$
is nilpotent for any triple $A, B, C$. We put forward the following conjecture.
\begin{conjecture}
  \label{sec:conj-regard-symm-1}
  The canonical braided structure on $D^b_{\GL_n}(\GL_n)$ is unipotent.
\end{conjecture}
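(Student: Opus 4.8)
We sketch a possible strategy. By \eqref{eq:4}, the double braiding $\mathcal{R}_{A,B}:=\beta_{B,A}\circ\beta_{A,B}\colon A\star B\to A\star B$ is expressed through the twist as $\mathcal{R}_{A,B}=\theta_{A\star B}\circ(\theta_A\star\theta_B)^{-1}$, so the statement is entirely about $\theta$. For $G=\GL_n$ every centralizer $Z_G(g)$ is connected, so, as in the proof of Proposition~\ref{sec:twist-cor}, $\theta$ acts as the identity on all stalks of any object of $D^b_G(G)$, and hence as the identity on every perverse sheaf; by naturality ${}^p\mathcal{H}^k(\theta_M)=\theta_{{}^p\mathcal{H}^k(M)}=\id$, so $\theta_M-\id_M$ lies in the ideal $\mathcal{J}(M)=\{\phi\in\operatorname{End}(M):{}^p\mathcal{H}^k(\phi)=0\ \forall k\}$ of perversely trivial endomorphisms. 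By the vanishing of negative $\operatorname{Ext}$-groups between perverse sheaves, $\mathcal{J}(M)$ is a nilpotent ideal, with $\mathcal{J}(M)^N=0$ for $N$ the number of nonzero perverse cohomology sheaves of $M$; in particular $\theta_M$, and therefore $\mathcal{R}_{A,B}$, are unipotent.

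It then remains to show that $x=(\mathcal{R}_{A,B}-1)\star\id_C$ and $y=\id_A\star(\mathcal{R}_{B,C}-1)$ generate a nilpotent subalgebra of $\operatorname{End}(A\star B\star C)$. This endomorphism algebra is finite-dimensional ($P_G(G)$ has finite length and equivariant $\operatorname{Ext}$-groups are finite-dimensional in each degree), so it is enough to place $x$ and $y$ in a common nilpotent ideal, and $\mathcal{J}(A\star B\star C)$ is the natural candidate. Since $\mathcal{R}_{A,B}-\id$ is built from $\theta_{A\star B}-\id\in\mathcal{J}(A\star B)$ and from convolutions of the perversely trivial morphisms $\theta_{\bullet}-\id$ with identity morphisms, this reduces to the single point that convolving a perversely trivial endomorphism with an identity stays perversely trivial: $\mathcal{J}(Y)\star\id_Z\subseteq\mathcal{J}(Y\star Z)$ and $\id_Z\star\mathcal{J}(Y)\subseteq\mathcal{J}(Z\star Y)$. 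Granting this, $x,y\in\mathcal{J}(A\star B\star C)$ and we are done. (Alternatively, without $\mathcal{J}$: both $x$ and $y$ preserve the finite perverse filtration ${}^p\tau_{\le\bullet}(A\star B\star C)$ by functoriality, and one shows that they generate nilpotent subalgebras of each $\operatorname{End}_{P_G(G)}({}^p\mathcal{H}^k(A\star B\star C))$, interleaving this with the nilpotence of $\mathcal{J}(A\star B\star C)$.)

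The main obstacle is precisely the interaction of $\star$ with perverse truncation: $\star$ is only \emph{right} $t$-exact for the perverse $t$-structure (since $m$ is affine), so $m_*$ does not commute with ${}^p\tau_{\le\bullet}$ on both sides, and a perversely trivial endomorphism of $Y$ — which one can write as a composite $Y\to{}^p\tau_{\ge k+1}Y\to{}^p\tau_{\le k}Y\to Y$ through adjacent truncations glued by an $\operatorname{Ext}$-class — need not obviously remain so after convolving with $\id_Z$. I would attack this either head-on, by analysing $m_*$ of such composites, or by introducing a second, automatically $\star$-compatible filtration: over $\mathbf{F}_q$ the weight filtration is functorial, hence preserved by every braiding and by $\mathcal{R}$, and interleaving it with the perverse filtration produces one finite filtration of $A\star B\star C$ strictly decreased by $x$ and $y$ provided the double braiding acts as the identity on the associated graded. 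Verifying that last point — that $\theta$ induces the identity on the weight-graded perverse cohomology sheaves of $A\star B\star C$, which via the geometrically semisimple structure there should again come down to the triviality of $\theta$ on stalks — is the step I expect to be the crux; the rest is formal.
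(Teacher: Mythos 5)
This statement is stated in the paper as a \emph{conjecture}: the paper offers no proof of it, and Remark \ref{sec:conj-regard-symm-2} records exactly the two observations that your sketch establishes, namely that connectedness of centralizers in $\GL_n$ forces $\theta_A-\id$ to be nilpotent for every $A\in D^b_{\GL_n}(\GL_n)$, and that via \eqref{eq:4} the unipotence of the braiding is equivalent to a nilpotence statement about the twist. So the part of your argument that is complete (triviality of $\theta$ on stalks, hence on perverse sheaves as in Proposition \ref{sec:twist-cor}, hence $\theta_M-\id\in\mathcal{J}(M)$ with $\mathcal{J}(M)$ nilpotent of order bounded by the perverse amplitude) reproduces what the paper already knows and explicitly does not regard as a proof of the conjecture.

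The genuine gap is the one you yourself flag: nilpotence of each generator $x=(\mathcal{R}_{A,B}-1)\star\id_C$ and $y=\id_A\star(\mathcal{R}_{B,C}-1)$ separately does not give nilpotence of the subalgebra they generate (two nilpotent elements of a finite-dimensional algebra can generate the whole matrix algebra), so everything hinges on the inclusion $\mathcal{J}(Y)\star\id_Z\subseteq\mathcal{J}(Y\star Z)$, i.e.\ on convolution with an identity preserving perversely trivial endomorphisms. This is precisely where the failure of left $t$-exactness of $\star$ obstructs the argument: a perversely trivial endomorphism of $Y$ is detected by the truncation triangles for $Y$, and $(-)\star Z$ does not commute with ${}^p\tau_{\le\bullet}$, so there is no formal reason for the image of $\mathcal{J}(Y)$ to land in $\mathcal{J}(Y\star Z)$. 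Your alternative via the weight filtration likewise terminates in an unverified claim (that the double braiding is the identity on the weight-graded perverse cohomology of $A\star B\star C$). Both routes are plausible lines of attack, but neither is carried out, so the proposal is a strategy for the conjecture rather than a proof; the statement remains open both in the paper and in your write-up.
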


\begin{remark}
  \label{sec:conj-regard-symm-2}
  It is easy to see (Section \ref{sec:conv-reduct-group}), that if any element $g \in G$ lies in the neutral connected component of its centralizer in $G$, we have that $\theta_A - \id_A$ is a nilpotent element in $\operatorname{End}(A)$ for any $A \in D^b_G(G)$. Since all the centralizers of elements of $\GL_n$ are connected, this property holds for $\GL_n$. In this case, the braiding being unipotent is equivalent to the condition that for any triple of objects $A,B,C \in \mathcal{C}$ the subalgebra in $\operatorname{End}_{\mathcal{C}}(A\star B\star C)$ generated by $\theta_{A\star B}\star\id_C-\id_{A\star B\star C}$ and $\id_A\star\,\theta_{B\star C}-\id_{A \star B \star C}$ is nilpotent.  
\end{remark}
\begin{remark}
  One can show that for a torus $T$ over $\kk$, the canonical braiding on $D^b_T(T)$ is unipotent.
\end{remark}
\subsection{Parabolic restriction and Harish-Chandra transform.}
We mostly follow \cite{bezrukavnikovParabolicRestrictionPerverse2021} in our geometric setup.

Let $G$ be a connected reductive group defined over $\kk$, let $P \subset G$ be a parabolic subgroup, $U \subset P$ its unipotent radical and $L$ its Levi subgroup.

Let \[\yy_P = (G/U \times G/U)/L,\] where $L$ acts via the right diagonal action.
Note that $G$ acts on $\yy_P$ via the left diagonal action.

We equip the category $D^b_G(\yy_P)$ with the monoidal structure $\star$ given by $*$-convolution: to define it, we identify $D^b_G(\yy_P)$ with $D^b_{U^2}(G/_{\Ad}L)$, where the subscript $U^2$ stands for the full subcategory of complexes with constant cohomology along $U^2$-orbits, and use the formula \eqref{eq:8}. 

\subsubsection{Harish-Chandra transform.}\label{sec:harish-chandra-trans-1} Consider the diagram
\[
\begin{tikzcd} & G\times G/P \arrow[ld, "p"'] \arrow[rd, "q"] & \\ G &
& \yy_P
\end{tikzcd}
\]
where $p$ is the projection and $q$ is the map $q(g,xP) = (xU,gxU)L$. The functor
\[
  \hc=q_*p^!:D^b_G(G) \to D^b_G(\yy_P),
\]
where the action of $G$ on itself is the adjoint action, is called the Harish-Chandra transform. It is well-known to be monoidal with respect to the $*$-convolution.
It has a left adjoint functor
\[
  \chi = p_!q^* \simeq p_*q^![-2\dim U].
\]
More generally, for parabolics $P \subset Q$, we have functors $\hc_{P}^Q, \chi_{P}^Q$, defined in \cite{lusztigParabolicCharacterSheaves2004}. We recall the definition. Let
\[\tilde{Z}_{P, Q} = \{(xU_Q, yU_Q, zU_P) \in G/U_Q \times G/U_Q \times G/U_P: x^{-1}z \in Q\},\] and let $Z_{P,Q} = \tilde{Z}_{P,Q}/(L_P \times L_Q)$, where $L_P$ acts on the $G/U_P$-factor on the right, and $L_Q$ acts diagonally on the right on the $G/U_Q \times G/U_Q$-factor. We have maps
     \[
       \begin{tikzcd} & Z_{P,Q} \arrow[ld, "{f_{P,Q}}"'] \arrow[rd,
"{g_{P,Q}}"] & \\ \mathcal{Y}_Q & & \mathcal{Y}_P
\end{tikzcd}
     \]
     given by
     \[
       f_{P,Q}(xU_Q, yU_Q, zU_P) = (xU_Q, yU_Q),
     \]
     \[
       g_{P,Q}(xU_Q, yU_Q, zU_P) = (zU_P, yx^{-1}zU_P).
     \]
     Note that $g_{P,Q}$ is well-defined because $x^{-1}z \in Q$, while $Q$ normalizes $U_Q$, and $U_Q \subset U_P$, since $P \subset Q$. 
     Write
     \begin{align*}
       \chi_{P}^{Q} &= f_{P,Q*}g_{P,Q}^![-2\dim Q/P]: D^b(\yy_P/G) \to D^b(\yy_Q/G),\\
       \hc_P^Q &=g_{P,Q*}f_{P,Q}^!: D^b(\yy_Q/G) \to D^b(\yy_P/G).
     \end{align*}
   In our previous notations, we have $\hc = \hc_P^G, \chi = \chi_P^G$ (note that ${\yy_G \simeq G}$ via the map $(x,y) \mapsto yx^{-1}$). It was proved in ibid.\@ that, if $P \subset Q \subset R$, we have
     \[
       \chi_Q^R\chi_P^Q = \chi_{P}^R\text{ and } \hc^Q_P\hc^R_Q = \hc^R_P.
     \]
     The functors $\hc_{P}^Q$ are monoidal with respect to convolution.
     
Let $\Delta_P$ be the closed $G$-orbit in $G/P \times G/P$ and let $\tilde{\Delta}_P$ be its preimage in $\yy_P$. Let \[\iota_P:\tilde{\Delta}_P\to\yy_P\] be the corresponding embedding.
Identify \[P\backslash L = G\backslash\tilde{\Delta}_P\] via the map $t \mapsto (t, tU)$. Here $P$ acts on $L$ through the adjoint action of $L$ on itself. Abusing notation, we will write $\iota_P^!$ for the composed functor \[ D^b_G(\yy_P) \xrightarrow{\iota_P^!} D^b_G(\tilde{\Delta}_P) = D^b_P(L) \to D^b_L(L), \]  where the last arrow is the forgetful functor. Let $\iota_{P!}$ stand for its left adjoint.

We have the following description of the parabolic restriction functor $\operatorname{Res}_{P}^G$ and parabolic induction functor $\operatorname{Ind}_{P}^G$
(cf. \cite[Remark 4.2]{bezrukavnikovParabolicRestrictionPerverse2021}):
\begin{align}
  \label{eq:6} \operatorname{Res}_{P}^G
  &= \iota_P^!\hc_*,\\
  \label{eq:5}\operatorname{Ind}_{P}^G
  &= \chi_P^G\iota_{P!}.
\end{align}
We will also need the following proposition, parallel to \cite[Lemma 6.6]{lusztigParabolicCharacterSheaves2004} and, for $Q = G$, \cite[Lemma 8.5.4]{ginzburgAdmissibleModulesSymmetric1989}. For a pair of parabolics $P \subset Q$ with Levi subgroups $L$ and $M$, respectively, define
\[
  \mathcal{N}_{P}^Q = \{(x, gP)\in M \times Q/P: x \in {}^gU_P\}.
\]
Let $\pi: \mathcal{N}_P^Q \to M$ be the natural projection, and define \[\Spgr^Q_P = \pi_*\cc_{\mathcal{N}_P}[2 \dim U_P \cap M].\]
\begin{proposition}
  \label{sec:harish-chandra-trans}
  We have an isomorphism of functors
  \[
    \chi_P^Q\hc^Q_P(-) \simeq \iota_{Q*}\Spgr^Q_P \star (-). 
  \]
  In particular, the identity functor is a direct summand of $\chi_P^Q\hc^Q_P$.
\end{proposition}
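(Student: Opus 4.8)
The plan is to unwind the composition $\chi_P^Q\hc_P^Q$ as a single correspondence and identify the resulting kernel with $\iota_{Q*}\Spgr_P^Q$, after which convolution with the unit object $\cc$ on $G/_{\Ad}G$ produces the stated formula. Concretely, I would first form the fiber product of the diagrams defining $\hc_P^Q$ and $\chi_P^Q$: starting from $\F \in D^b_G(\yy_Q)$, the functor $\hc_P^Q = g_{P,Q*}f_{P,Q}^!$ pulls back along $f_{P,Q}$ and pushes forward along $g_{P,Q}$ to $\yy_P$, and then $\chi_P^Q = f_{P,Q*}g_{P,Q}^!$ pulls back along $g_{P,Q}$ and pushes forward along $f_{P,Q}$ back to $\yy_Q$. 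Base change along the Cartesian square
\[
\begin{tikzcd}
Z \arrow[r] \arrow[d] & Z_{P,Q} \arrow[d, "g_{P,Q}"] \\
Z_{P,Q} \arrow[r, "g_{P,Q}"'] & \yy_P
\end{tikzcd}
\]
turns $\chi_P^Q\hc_P^Q$ into $a_*b^!$ for the two projections $a,b: Z \to \yy_Q$ obtained by composing with $f_{P,Q}$. So the whole composite is convolution with the kernel $\K = (a,b)_*\cc_Z$ (up to the appropriate shift), where $\cc_Z$ is the constant sheaf; this is the standard ``composition of correspondences is convolution of kernels'' computation, valid because all these categories carry the $*$-convolution monoidal structure from \eqref{eq:8} and the functors in question are given by correspondences.

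The next step is to explicitly describe the fiber product $Z$ and the map $(a,b)$ to $\yy_Q\times\yy_Q$ in group-theoretic coordinates, using the presentations $\tilde Z_{P,Q} = \{(xU_Q,yU_Q,zU_P): x^{-1}z\in Q\}$ and the formulas for $f_{P,Q},g_{P,Q}$. A point of $Z$ is a pair of triples sharing the same $zU_P$-coordinate (since we are fibering over $\yy_P$ via $g_{P,Q}$, which records $(zU_P, yx^{-1}zU_P)$ — so actually sharing the image $(zU_P, yx^{-1}zU_P)$), and computing $a,b$ via $f_{P,Q}$ on each factor gives two points $(xU_Q,yU_Q)$ and $(x'U_Q, y'U_Q)$ of $\yy_Q$. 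Carefully tracking the constraints $x^{-1}z\in Q$, $x'^{-1}z\in Q$, and $yx^{-1}z \equiv y'x'^{-1}z \pmod{U_P}$ and passing to the $L_Q$-quotients, I expect to recognize $Z$ (or rather the image of $(a,b)$ together with its fibers) as exactly the incidence variety $\mathcal{N}_P^Q = \{(x,gP)\in M\times Q/P : x\in {}^gU_P\}$ spread out over the closed orbit $\tilde\Delta_Q \subset \yy_Q$ in the first $\yy_Q$-factor — i.e., the kernel $\K$ is supported on $\tilde\Delta_Q \times \yy_Q$ and, restricted there, is precisely $\iota_{Q*}\pi_*\cc_{\mathcal{N}_P^Q}$ with the shift $[2\dim(U_P\cap M)]$ matching the definition of $\Spgr_P^Q$. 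Once the kernel is identified as $\iota_{Q*}\Spgr_P^Q$ (viewed as an object concentrated on the diagonal $\tilde\Delta_Q$ in the first coordinate), convolution with it is by definition the functor $\iota_{Q*}\Spgr_P^Q \star (-)$ on $D^b_G(\yy_Q)$, which is the claimed isomorphism. The final sentence is then immediate: since $\cc$ (the monoidal unit, supported at $e$) is a direct summand of $\Spgr_P^Q$ — the summand corresponding to the trivial local system / the image of the closed point, because the Springer-type map $\pi:\mathcal{N}_P^Q\to M$ is generically finite onto its image and the fiber over a regular element gives the trivial summand, exactly as in \cite{lusztigCharacterSheaves1985}, Lemma 6.6 — the identity functor $\iota_{Q*}\iota_Q^!$ (or simply $\id$ after the appropriate identification) is a direct summand of $\iota_{Q*}\Spgr_P^Q\star(-) = \chi_P^Q\hc_P^Q$.

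\textbf{Main obstacle.} The delicate point is the coordinate bookkeeping in identifying the fiber product $Z$ and the kernel $\K$ with $\iota_{Q*}\Spgr_P^Q$ — in particular getting the orbit structure right (why the first $\yy_Q$-coordinate is forced onto the closed orbit $\tilde\Delta_Q$, so that the kernel is genuinely of the form $\iota_{Q*}(-)$ rather than a more spread-out complex), checking that the relevant maps are smooth of the expected relative dimension so that the shift $2\dim(U_P\cap M)$ comes out exactly, and verifying equivariance for the left diagonal $G$-action throughout. The analogous statement for $Q=G$ is in \cite{lusztigCharacterSheaves1985} and \cite{ginzburgAdmissibleModulesSymmetric1989}, so the strategy is to reduce to or adapt that computation; the only genuinely new feature here is running it relative to a possibly-proper $Q$, which is mostly a matter of replacing $G$ by $Q$ and $\mathcal N_G$-type varieties by their $Q$-relative analogues, but one must be careful that the composition identities $\hc_P^Q\hc_Q^R = \hc_P^R$ from \cite{lusztigParabolicCharacterSheaves2004} interact correctly with the monoidal structures so that the ``direct summand'' conclusion is stable under these.
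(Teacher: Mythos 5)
Your plan is essentially the paper's proof: base change identifies $\chi_P^Q\hc_P^Q$ with the correspondence given by the fiber product $Z_{P,Q}\times_{\yy_P}Z_{P,Q}$, and the paper then completes exactly the coordinate bookkeeping you flag as the remaining obstacle by writing down an explicit isomorphism between this fiber product and the correspondence computing $\iota_{Q*}\Spgr^Q_P\star(-)$ (in its coordinates, with inverse $(a,b,c,z)\mapsto(b,c,a,c,z)$), rather than phrasing it as an identification of kernels. The ``direct summand'' claim is then, as you say, the standard fact that the skyscraper at the unit is a summand of $\Spgr^Q_P$.
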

We sketch the proof, parallel to the proof in ibid.
\begin{proof}
  To unburden the notations, we omit the quotients by the left diagonal $G$-action everywhere. All arrows in all diagrams are $G$-equivariant. By base change, the left-hand side is naturally isomorphic to the functor $p_*q^!$ defined by the diagram
 \[
   \begin{tikzcd} & X \arrow[ld,"p"'] \arrow[rd,"q"] & \\ \yy_Q & & \yy_Q \end{tikzcd}
\]
where
\begin{multline*}
  X= \{((x_1U_Q,x_2U_Q)M,(x_3U_Q,x_4U_Q)M,zP) \in \yy_Q^2\times G/P, \\
  x_1^{-1}z \in Q, x_3^{-1}z \in Q, x_1x_2^{-1}x_4x_3^{-1} \in {}^zU_P\},
\end{multline*}
with
\begin{align*}
  q((x_1U_Q,x_2U_Q)M,(x_3U_Q,x_4U_Q)M,zP) &= (x_1U_Q, x_2U_Q)M, \\
  p((x_1U_Q,x_2U_Q)M,(x_3U_Q,x_4U_Q)M,zP) &= (x_3U_Q, x_4U_Q)M.
\end{align*}
Note that the condition $x_1x_2^{-1}x_4x_3^{-1} \in {}^zU_P$ in the definition of $X$ can be rewritten as:
\[
  z^{-1}x_1(x_2^{-1}x_4)x_3^{-1}z \in U_P.
\]
Since $U_P \subset Q$, this and the two conditions $x_1^{-1}z \in Q$ and $x_3^{-1}z \in Q$ imply that $x_2^{-1}x_4 \in Q$. We will use this observation in the computations below.

  Similarly, the right-hand side is naturally isomorphic to the functor $p'_*q'^!$ defined by the diagram
 \[
   \begin{tikzcd} & X' \arrow[ld,"p'"'] \arrow[rd,"q'"] & \\ \yy_Q & & \yy_Q 
\end{tikzcd}
\]
where
\begin{multline*}
  X'= \{((aU_Q,bU_Q,cU_Q)M,zP) \in (G/U_Q)^3/M \times G/P, \\ a^{-1}z \in Q, ba^{-1} \in {}^zU_P\},
\end{multline*}
with
\begin{align*}
  q'((aU_Q,bU_Q,cU_Q)M,zP) &= (bU_Q, cU_Q)M\\
  p'((aU_Q,bU_Q,cU_Q)M,zP) &= (aU_Q, cU_Q)M.
\end{align*}
We have a commutative diagram
 \[
   \begin{tikzcd}
     & X' \arrow[ld,"q'"'] \arrow[rd,"p'"] & & X \arrow[ll, "\pi"] \arrow[ld,"p"'] \arrow[rd,"q"] & \\ \yy_Q & & \yy_Q & & \yy_Q
\end{tikzcd}
\]
with
  \begin{multline*}
    \pi((x_1U_Q,x_2U_Q)M,(x_3U_Q,x_4U_Q)M,zP) =\\ ((x_3x_4^{-1}x_2U_Q, x_1U_Q, x_2U_Q)M, zP). 
  \end{multline*}
  It is easy to see that $\pi$ is an isomorphism with the inverse given by the following map $X' \to X$:
  \begin{align*}
    ((aU_Q, bU_Q, cU_Q)M, zP) \mapsto ((bU_Q, cU_Q)M, (aU_Q, cU_Q)M, zP),
\end{align*}
  which implies the result.
\end{proof}
\subsubsection{Pro-unit in the category of character sheaves.} Assume that $G$ has connected center.
 Let $I$ be the set of the simple roots of $G$ corresponding to $B$. For a parabolic subgroup $P \supset B$ with a Levi $L$ indexed by a subset $J \subset I$, let $W_J$ be the corresponding parabolic subgroup of $W$. Let $W^J$ stand for the set of minimal length representatives of $W/W_J$. 

We say that $\F \in D^b_G(\yy_P)$ has character $\lambda \in \mathcal{C}(T)(\Qlbar)$, if $\hc_B^P(\F)$ is a monodromic complex with monodromy in the $W_J$-orbit of ${\lambda}$. We write $D^\lambda(\yy_P)$ for the full subcategory consisting of complexes with character $\lambda$. Note that, by definition, $D^\lambda(\yy_P) = D^{w\lambda}(\yy_P)$ for $w \in W_J.$ Let $P^{\lambda}(\yy_P)$ stand for the full subcategory of perverse objects in $D^\lambda(\yy_P)$. It is well-known that if $\mu\notin W\lambda$, then for all $X \in D^{\lambda}(\yy_P)$ and $Y \in D^{\mu}(\yy_P)$ we have $\Hom(X,Y) = 0$. 

For $P=G,$ where we have $\yy_G \simeq G,$ objects in $P^{\lambda}(G)$ are called character sheaves with character $\lambda$. More generally, for any finite subset $S \subset \cC(T)(\Qlbar)$, we say that objects of $\bigoplus_{\lambda \in S}P^{\lambda}(G)$, where the latter is considered as a full subcategory of $P_G(G),$ are character sheaves with characters in $S$.

For $\lambda \in \mathcal{C}(T)(\Qlbar)$ let $W_\lambda$ be its stabilizer in $W$. Since $G$ is assumed to have connected center, $W_\lambda$ is a Coxeter group conjugate to a parabolic subgroup of $G$. Let $\mathfrak{m}_{\lambda+} \subset \mathfrak{m}_0$ be the ideal generated by $W_\lambda$-invariant polynomials in $\mathfrak{m}_{0}$. Let $\mathfrak{E}_n^{\lambda}$ be the perverse local system on $T$ satisfying $\mathcal{M}(\mathfrak{E}_n^{\lambda}) = m_{\lambda}^*(\mathfrak{R}/\mathfrak{m}_{\lambda+}^n)$. Abusing notation, write $\mathfrak{E}_n^{\lambda}$ for the sheaf on $\tilde{\Delta}_B$ corresponding to $\mathfrak{E}_n^{\lambda} \in D^b(T)$. We will need the following
\begin{proposition}
\label{centralunit} For any $\lambda \in \mathcal{C}(T)$, there is an inverse system of perverse sheaves ${E}_n^{\lambda} \in P_G(G)$ such that \[\hc_*(E_n^{\lambda}) = \operatorname{Res}_{B}^G(E_n^{\lambda})= \bigoplus_{\mu\in W\lambda}\mathfrak{E}_n^{\mu}.\]
\end{proposition}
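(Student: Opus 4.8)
The plan is to build the inverse system $E_n^\lambda$ by taking the parabolic induction, from the torus, of the perverse local systems $\mathfrak{E}_n^\lambda$ of Proposition~\ref{centralunit}'s statement, and then to compute its Harish--Chandra transform using the projection formula together with Proposition~\ref{sec:harish-chandra-trans}. Concretely, set
\[
  E_n^\lambda = \operatorname{Ind}_B^G(\mathfrak{E}_n^\lambda) = \chi_B^G\iota_{B*}(\mathfrak{E}_n^\lambda),
\]
using the description \eqref{eq:5}, where $\mathfrak{E}_n^\lambda$ is viewed as an object of $D^b_B(L)=D^b_B(T)$ on $\tilde\Delta_B$. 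One first checks that this is perverse: since $\mathfrak{E}_n^\lambda$ is a shifted $W_\lambda$-adapted local system and $G$ has connected center, $W_\lambda$ is (conjugate to) a parabolic subgroup, and the relevant parabolic induction is t-exact on the monodromic subcategory it lands in — this is the standard cleanness/t-exactness input, and is where one uses that the $\mathfrak R/\mathfrak m_{\lambda+}^n$-structure is chosen $W_\lambda$-invariantly.

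Next I would compute $\hc_*(E_n^\lambda) = \operatorname{Res}_B^G(E_n^\lambda)$. By \eqref{eq:6}, $\operatorname{Res}_B^G = \iota_B^!\hc_*$, and $\hc=\hc_B^G$ is monoidal, while $\chi_B^G\iota_{B*}$ is the left adjoint appearing in $\operatorname{Ind}$. The key computation is
\[
  \hc_B^G\chi_B^G(-) \simeq \Spgr_B^G \star (-),
\]
which is exactly Proposition~\ref{sec:harish-chandra-trans} in the case $P=B$, $Q=G$ (with $\iota_{G*}$ trivial since $\yy_G\simeq G$). Applying $\iota_B^!$ and unwinding $\Spgr_B^G$ — whose restriction to $\tilde\Delta_B\simeq T$ is the constant sheaf pushed from the fiber $W=W\backslash(\text{point})$-worth of components, i.e. computes the $W$-orbit sum — one obtains that $\operatorname{Res}_B^G\operatorname{Ind}_B^G(\mathfrak{E}_n^\lambda)$ is $\bigoplus_{w\in W}w^*\mathfrak{E}_n^\lambda$. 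Since $\mathfrak{E}_n^\lambda$ depends only on the $W_\lambda$-orbit of $\lambda$ and $w^*\mathfrak{E}_n^\lambda = \mathfrak{E}_n^{w\lambda}$, the orbit-stabilizer grouping collapses this to $\bigoplus_{\mu\in W\lambda}\mathfrak{E}_n^\mu$, possibly up to a multiplicity $|W_\lambda|$ that one absorbs by replacing $\mathfrak{E}_n^\lambda$ with its reduced version or by noting the induction already accounts for it via the fiber geometry of $\pi:\mathcal N_B^G\to T$. The inverse system structure in $n$ is inherited functorially from that of $\mathfrak{E}_n^\lambda = $ (sheaf with $\M = m_\lambda^*(\mathfrak R/\mathfrak m_{\lambda+}^n)$), since $\operatorname{Ind}_B^G$ and $\operatorname{Res}_B^G$ are functors.

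The main obstacle I anticipate is the t-exactness/perversity claim for $E_n^\lambda$: $\operatorname{Ind}_B^G = \chi_B^G\iota_{B*}$ is a composition of a $!$-pushforward-type functor ($\iota_{B*}$) and a proper pushforward along $\chi_B^G = f_{B,G*}g_{B,G}^!$, and neither is t-exact on the nose. One must restrict to the block $D^\lambda(\yy_B)$ and use that on monodromic complexes with the given (unipotent-times-$\lambda$) monodromy the map $\chi_B^G$ becomes small or semismall in the relevant stratified sense — this is the content parallel to cleanness of Harish--Chandra bimodules / Lusztig's results on admissible complexes, and it is exactly where the connected-center hypothesis and the $W_\lambda$-invariant choice of $\mathfrak m_{\lambda+}$ are essential (otherwise $W_\lambda$ is not a Coxeter/parabolic subgroup and the induction acquires higher cohomology). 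A secondary technical point is bookkeeping the shift $[2\dim U_B]$ in the definition of $\Spgr_B^G$ against the shifts implicit in $\hc=q_*p^!$ and in the normalization $\mathfrak{E}_n^\lambda$, so that the final identification is with no residual shift; I would fix this by checking it on the open stratum, where everything is a rank-one local system computation on a torus.
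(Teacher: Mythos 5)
There is a genuine gap, and it sits exactly at the step you flag as "the key computation." The paper's own proof is a citation (Chen, \emph{On the conjectures of Braverman--Kazhdan}, Thm.~7.1 and Prop.~3.2, plus Section 5.3 of \cite{hhh}), so attempting a direct construction is legitimate, but your central identity misreads Proposition \ref{sec:harish-chandra-trans}. That proposition computes $\chi_P^Q\hc_P^Q(-)\simeq\iota_{Q*}\Spgr_P^Q\star(-)$, an endofunctor of $D^b(\yy_Q)$ (i.e.\ $\operatorname{Ind}$ \emph{after} $\hc$); what you need for $\operatorname{Res}_B^G\operatorname{Ind}_B^G$ is the opposite composition $\hc_B^G\chi_B^G$, an endofunctor of $D^b(\yy_B)$. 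That composition is \emph{not} convolution with a Springer sheaf: as the paper itself records in the proof of Corollary \ref{sec:generic-monodromy-1}, $\hc_B^G\chi_B^G\F$ only lies in $\langle\Phi_w(\F)[d_w]\rangle_{w\in W}$ --- a Mackey-type filtration with $|W|$ subquotients $w^*\F$ carrying nonzero homological shifts $d_w$, which a priori neither splits nor lands on $\tilde\Delta_B$. Consequently your argument establishes neither the first equality $\hc_*(E_n^\lambda)=\operatorname{Res}_B^G(E_n^\lambda)$ (i.e.\ that the Harish--Chandra transform is supported on the closed orbit --- the "centrality" that is the real content of the proposition) nor the splitting of the restriction into a direct sum.

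Even granting a splitting, the multiplicities are wrong: the Mackey filtration of $\operatorname{Res}_B^G\operatorname{Ind}_B^G(\mathfrak{E}_n^\lambda)$ has $|W|$ pieces, so each $\mathfrak{E}_n^\mu$, $\mu\in W\lambda$, would occur $|W_\lambda|$ times, whereas the statement requires multiplicity one. Your suggestion to "absorb" this by a reduced version is precisely the missing construction, not a fix for it: one must cut out a canonical direct summand of the induced object using the $W_\lambda$-action (this is where the $W_\lambda$-invariant ideal $\mathfrak{m}_{\lambda+}$ genuinely enters, not merely as a t-exactness device), and verifying that this summand is central and restricts without shifts is the substance of the cited Theorem~7.1 of Chen and of \cite{hhh}, Section~5.3. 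The perversity discussion and the shift bookkeeping in your last paragraph are reasonable, but they are secondary to these two unaddressed points.
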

\begin{proof} This follows from Theorem 7.1 together with \cite[Proposition 3.2]{chenConjectureBravermanKazhdan2022} (see also \cite[Section 5.3]{hhh}).
\end{proof}

This allows us to formulate the following analogue of Proposition \ref{sec:mellin-transform-1} for categories of character sheaves on $\GL_n$. 
\begin{proposition}
  \label{sec:pro-unit-character-1}
Let $G = \mathrm{GL}_n$. Let $S$ be a finite $W$-invariant set of points in $\ct$.  Write $({}^pD^{\leq 0}, {}^pD^{\geq 0})$ for the perverse $t$-structure on $D^b_G(G)$.
  \begin{enumerate}[label=\alph*),ref=(\alph*)]
  \item \label{item:8} For $\F \in D^b_G(G)$ we have $\F \in {{}^pD^{\geq 0}}$ if and only if the system of $^p\mathcal{H}^i(\F\star E_n^{\lambda})$ is essentially zero for all $\lambda \in \mathcal{C}(T)(\Qlbar)$ and all $i < 0$.
\item \label{item:1} The system of $^p\mathcal{H}^i(\F\star E_n^{\lambda})$ is essentially zero for all $\lambda \in \mathcal{C}(T)(\Qlbar)$ with $\lambda \notin S$, and all $i$, if and only if the ${}^p\mathcal{H}^i(\F)$ are character sheaves with characters in $S$.
\item \label{item:2}  If for $\F \in P_G(G)$ we have $^p\mathcal{H}^0(\F \star E_1^{\lambda}) = 0$ for all $\lambda \in \mathcal{C}(T)(\Qlbar)$ with $\lambda \notin S$, then $\F$ is a character sheaf with characters in $S$.
\end{enumerate}
\end{proposition}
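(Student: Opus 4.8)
The plan is to reduce Proposition~\ref{sec:pro-unit-character-1} to the torus statements (Proposition~\ref{sec:mellin-transform}, Proposition~\ref{sec:mellin-transform-2}, Proposition~\ref{sec:mellin-transform-1}) by transporting everything through the Harish-Chandra transform $\hc = \hc_B^G$ and its left adjoint $\chi = \chi_B^G$. The two facts I would lean on are: (i) $\hc$ is monoidal for $\star$, $t$-exact up to an explicit shift (it is, up to shift, parabolic restriction $\res_B^G$ via \eqref{eq:6}, which is known to be perverse $t$-exact for $\GL_n$ after the normalizing shift), and conservative on the relevant subcategories; and (ii) by Proposition~\ref{centralunit}, $\hc_*(E_n^\lambda) = \operatorname{Res}_B^G(E_n^\lambda) = \bigoplus_{\mu\in W\lambda}\mathfrak{E}_n^\mu$, so convolving with $E_n^\lambda$ on $G$ becomes, after applying $\hc$, convolving with a sum of $\mathfrak{E}_n^\mu$ on $T$ (up to shift), which is exactly the kind of operation controlled by the Mellin transform. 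The shift is $[2\dim U]$ or $[\dim U]$ depending on normalization — I would fix it once at the outset so that $\hc[\text{shift}]$ is $t$-exact and monoidal.

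For part~\ref{item:8}: given $\F \in D^b_G(G)$, I would show $\F \in {}^pD^{\geq 0}$ iff $\hc(\F)[\text{shift}] \in {}^pD^{\geq 0}(\yy_B)$ — the ``only if'' is $t$-exactness; the ``if'' uses conservativity together with the fact that $\chi$ is left adjoint to $\hc$ and that the identity is a direct summand of $\chi_B^G\hc_B^G$ by Proposition~\ref{sec:harish-chandra-trans} (with $P=B$, $Q=G$), so $\F$ is a summand of $\chi\hc(\F)$ and its positivity can be read off from that of $\hc(\F)$. Then I transport the vanishing criterion: $\hc({}^p\mathcal{H}^i(\F\star E_n^\lambda))$ equals, up to shift, ${}^p\mathcal{H}^i(\hc(\F)\star \hc(E_n^\lambda)) = {}^p\mathcal{H}^i(\hc(\F)\star \bigoplus_\mu \mathfrak{E}_n^\mu)$, and $\mathfrak{E}_n^\mu$ is cofinal among the $\E_n^\mu$ (its Mellin transform is $m_\mu^*(\R/\mathfrak{m}_{\mu+}^n)$, which is cofinal in $m_\mu^*(\R/\mathfrak{m}_0^n)$ as pro-objects, since $\mathfrak{m}_{\mu+}$ and $\mathfrak{m}_0$ define the same topology up to the finite stabilizer $W_\mu$). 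Hence the system ${}^p\mathcal{H}^i(\F\star E_n^\lambda)$ is essentially zero iff ${}^p\mathcal{H}^i(\hc(\F)\star\E_n^\mu[r])$ is essentially zero for the relevant $\mu$, and Proposition~\ref{sec:mellin-transform} on $T$ finishes the argument. Parts~\ref{item:1} and~\ref{item:2} follow the same template: by the definition of ``character $\lambda$'' via $\hc_B^G$, a perverse $\F$ is a character sheaf with character in $S$ precisely when $\hc(\F)[\text{shift}]$ is a shifted $S$-monodromic local system on $T$ (modulo the $W$-orbit bookkeeping — $S$ is the set of $W$-fixed points, which are their own orbits, so this is clean), and then Proposition~\ref{sec:mellin-transform-1} parts~\ref{sec:mellin-transform-3}, \ref{sec:mellin-transform-4} give exactly the $S$-monodromicity criterion and the single-sheaf ($n=1$) vanishing criterion.

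The main obstacle I expect is the bookkeeping of $t$-exactness and shifts for $\hc$ on the full (non-perverse) derived category, together with the conservativity needed in the ``if'' directions. The functor $\chi$ is only left adjoint, not inverse, to $\hc$, so I must genuinely use Proposition~\ref{sec:harish-chandra-trans} to see $\F$ as a summand of $\chi\hc(\F)$ — and I should check that the splitting is compatible with convolving by $E_n^\lambda$, i.e. that the summand decomposition is natural enough that ${}^p\mathcal{H}^i(\F\star E_n^\lambda)$ is a summand of ${}^p\mathcal{H}^i(\chi\hc(\F\star E_n^\lambda))$. A secondary subtlety is that $\hc$ lands in $D^b_G(\yy_B)$, not literally $D^b(T)$; I pass to $D^b(T)$ via $\iota_B^!$ on the closed orbit, but I must make sure the monodromy/character content I care about survives this restriction — this is where I would invoke that $\hc_B^G(\F)$ monodromic with monodromy in $W_J\lambda$ is detected on $\tilde\Delta_B$, which for $G=\GL_n$ and $S$ the $W$-fixed points is unproblematic. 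Once these compatibilities are pinned down, all three parts are formal consequences of the torus propositions.
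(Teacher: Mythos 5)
Your proposal is correct and follows essentially the same route as the paper: transport everything through $\res_B^G = \iota_B^!\hc_*$ using the identity $\res_B^G(\F\star E_n^\lambda)=\res_B^G(\F)\star(\oplus_{\mu\in W\lambda}\E_n^\mu)$, the t-exactness of parabolic restriction, and the Mellin-transform propositions on $T$. The one step you gesture at but leave loose is the return trip in parts (b) and (c) — being a character sheaf with character in $S$ is defined by $\hc_B^G(\F)$ being monodromic on $\yy_B$, not by its restriction to $\tilde\Delta_B$, and the paper closes this by noting that $\operatorname{Ind}_B^G$ of an $S$-monodromic local system is a character sheaf with character in $S$ while ${}^p\mathcal{H}^i(\F)$ is a direct summand of $\operatorname{Ind}_B^G\res_B^G({}^p\mathcal{H}^i(\F))$ via Proposition \ref{sec:harish-chandra-trans} (the same summand mechanism you invoke for part (a), where the paper instead cites conservativity of $\res_B^G$ on $P_G(G)$ from the absence of cuspidal data for $\GL_n$).
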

\begin{proof} From \eqref{eq:6} and Proposition \ref{centralunit} it is clear that
  \begin{equation}
    \label{eq:7} \operatorname{Res}_{B}^G(\F\star E_n^{\lambda})
= \operatorname{Res}_{B}^G(\F)\star \left(\bigoplus_{\mu\in
W\lambda}\mathfrak{E}_n^{\mu}\right).
\end{equation}
By the result of \cite{bezrukavnikovParabolicRestrictionPerverse2021}, the parabolic restriction functor is $t$-exact as a functor
  \[ D^b_G(G)\to D^b(T).
  \]
Since there are no non-trivial cuspidal pairs for $D^b_G(G)$ (see \cite{lusztigIntersectionCohomologyComplexes1984}), 
\[\operatorname{Res}_{B}^G:P_G(G)\to P(T)\] is a conservative functor. It follows that it is a faithful functor between the abelian categories above, and so \ref{item:8} follows.

For \ref{item:1} and \ref{item:2}, by Proposition \ref{sec:mellin-transform-1}, we get that
\[
  {}^p\mathcal{H}^i(\operatorname{Res}_{B}^G(\F)) = \operatorname{Res}_{B}^G({}^p\mathcal{H}^i(\F))
\]
are shifted $S$-monodromic local systems. For a shifted $S$-monodromic local system $\E$ on $T$, $\operatorname{Ind}_{B}^G(\mathcal{E})$ is a character sheaf with character in $S$. Since $\operatorname{Res}^G_B$ is exact and conservative, $\F$ is a subobject of $\operatorname{Ind}^G_B\operatorname{Res}^G_B(\F)$, and we get the result.
\end{proof}

\begin{corollary}
\label{sec:pro-unit-character} In the notations of Proposition \ref{sec:pro-unit-character-1}, if a morphism $f:\mathcal{A} \to \mathcal{B},$ where $ \mathcal{A},\mathcal{B}$ are perverse in $D^b_G(G)$, is such that \[f\star^0 \id_{E_n^{\lambda}}:\mathcal{A}\star^0 E_n^{\lambda} \to \mathcal{B}\star^0 E_n^{\lambda}\] is an isomorphism for all $n$ and $\lambda \in \mathcal{C}(T)(\Qlbar)$ with $\lambda \notin S$, the perverse cohomology sheaves of the cone $C_f$ of $f$ are character sheaves with characters in $S$.
\end{corollary}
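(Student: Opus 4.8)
The plan is to reduce the statement about the cone $C_f$ directly to Proposition \ref{sec:pro-unit-character-1}\ref{item:1}, using that perverse truncation of convolution and the cone construction interact in a controlled way. First I would record that $C_f$, being the cone of a map between perverse sheaves, lives in perverse degrees $[-1,0]$, so its only possibly nonzero perverse cohomologies are ${}^p\mathcal{H}^{-1}(C_f)$ and ${}^p\mathcal{H}^0(C_f)$, sitting in the long exact sequence
\[
0 \to {}^p\mathcal{H}^{-1}(C_f) \to \mathcal{A} \xrightarrow{f} \mathcal{B} \to {}^p\mathcal{H}^0(C_f) \to 0.
\]
So it suffices to show that $\ker f$ and $\operatorname{coker} f$ (computed in $P_G(G)$) are character sheaves with characters in $S$.

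Next I would compute the effect of convolving the above exact sequence with $E_n^\lambda$ for $\lambda \notin S$. The key input is that $(-)\star E_n^\lambda$ is right t-exact (as $\star$ is, $m$ being affine) and, crucially, that for $\lambda \notin S$ the relevant truncations vanish by Proposition \ref{sec:pro-unit-character-1}: indeed by hypothesis $f \star^0 \id_{E_n^\lambda}$ is an isomorphism, and I want to bootstrap this to the claim that $\mathcal{H}^i(C_f \star E_n^\lambda)$ is essentially zero for all $i$. The cleanest way is: apply $(-)\star E_n^\lambda$ to the triangle $\mathcal{A} \to \mathcal{B} \to C_f \to$, take perverse cohomology, and use that $\mathcal{A} \star E_n^\lambda$, $\mathcal{B}\star E_n^\lambda$ are concentrated in nonpositive perverse degrees with ${}^p\mathcal{H}^0 = \mathcal{A}\star^0 E_n^\lambda$, ${}^p\mathcal{H}^0 = \mathcal{B}\star^0 E_n^\lambda$. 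Then the long exact sequence, together with the isomorphism on ${}^p\mathcal{H}^0$ given by hypothesis, shows ${}^p\mathcal{H}^0(C_f \star E_n^\lambda) = 0$ and identifies ${}^p\mathcal{H}^{i}(C_f\star E_n^\lambda)$ for $i<0$ in terms of ${}^p\mathcal{H}^{i}$ and ${}^p\mathcal{H}^{i+1}$ of $\mathcal{A}\star E_n^\lambda$, $\mathcal{B}\star E_n^\lambda$. Here I need that these lower perverse cohomologies are themselves essentially zero as inverse systems in $n$ — this should follow because $\mathcal{A},\mathcal{B} \in {}^pD^{\geq 0}$ (being perverse), so by Proposition \ref{sec:pro-unit-character-1}\ref{item:8} the systems ${}^p\mathcal{H}^i(\mathcal{A}\star E_n^\lambda)$, ${}^p\mathcal{H}^i(\mathcal{B}\star E_n^\lambda)$ are essentially zero for $i<0$ (for all $\lambda$, in particular for $\lambda \notin S$). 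A short diagram chase with essentially-zero systems (the class of essentially zero systems is closed under extensions and subquotients in the relevant sense) then gives that ${}^p\mathcal{H}^i(C_f \star E_n^\lambda)$ is essentially zero for all $i$ and all $\lambda \notin S$.

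Finally, applying Proposition \ref{sec:pro-unit-character-1}\ref{item:1} to $\mathcal{F} = C_f$ yields that ${}^p\mathcal{H}^i(C_f)$ are character sheaves with characters in $S$ for all $i$, which is exactly the assertion about the cohomology of $C_f$.

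\textbf{Main obstacle.} The delicate point is the bookkeeping with inverse systems: I must be careful that ``essentially zero'' behaves well in the long exact sequences obtained by convolving with $E_n^\lambda$ and taking perverse cohomology — i.e. that in an exact sequence of inverse systems $\mathcal{F}' \to \mathcal{F} \to \mathcal{F}''$ with $\mathcal{F}', \mathcal{F}''$ essentially zero, also $\mathcal{F}$ is essentially zero, and the analogous sub/quotient statements. This is elementary but needs the systems to be suitably compatible in $n$ (the maps $E_n^\lambda \to E_m^\lambda$ induce the transition maps), and one should check the convolution functor and perverse truncation are compatible with these transition maps, so that all the long exact sequences are exact sequences \emph{of} inverse systems. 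The rest is a routine combination of right t-exactness of $\star$, the two-term concentration of $C_f$, and the cited Proposition.
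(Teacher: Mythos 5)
Your proof is correct and follows essentially the same route as the paper: convolve the triangle with $E_n^{\lambda}$, take the long exact sequence of perverse cohomology, kill ${}^p\mathcal{H}^0(C_f\star E_n^{\lambda})$ using the hypothesis, and propagate essential vanishing of the negative perverse cohomologies from $\mathcal{A},\mathcal{B}$ via Proposition \ref{sec:pro-unit-character-1}\ref{item:8}. The only (harmless) difference is that you apply \ref{item:1} directly to the whole complex $C_f$, whereas the paper treats ${}^p\mathcal{H}^0(C_f)$ via \ref{item:2} and then separately identifies ${}^p\mathcal{H}^{-1}(C_f\star E_n^{\lambda})$ with ${}^p\mathcal{H}^{-1}(C_f)\star^0 E_n^{\lambda}$ before invoking \ref{item:1}; your version is a slight streamlining of the same argument.
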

\begin{proof} The long exact sequence of perverse cohomology for the distinguished triangle
  \[
    \mathcal{A} \star E_n^{\lambda} \rightarrow \mathcal{B}  \star E_n^{\lambda} \to C_f\star E_n^{\lambda} \to \mathcal{A}\star E_n^{\lambda}[1]
\]
reads
\begin{multline}
  \dots \to {}^p\mathcal{H}^{-1}(\mathcal{A}\star E_n^{\lambda}) \to {}^p\mathcal{H}^{-1}(\mathcal{B}\star E_n^{\lambda}) \xrightarrow{c_\lambda} {}^p\mathcal{H}^{-1}(C_f\star E_n^{\lambda}) \xrightarrow{0}\\
  \xrightarrow{0}\mathcal{A}\star^0 E_n^{\lambda}\to \mathcal{B}\star^0 E_n^{\lambda}\xrightarrow{0} {}^p\mathcal{H}^0(C_f)\star^0 E_n^{\lambda}\to 0.
\end{multline}
We get that ${}^p\mathcal{H}^0(C_f)\star^0 E_n^{\lambda} = 0$ for $\lambda\notin S$, and so, by Proposition \ref{sec:pro-unit-character-1} \ref{item:2}, ${}^p\mathcal{H}^0(C_f)$ is a character sheaf with characters in $S$. It follows that ${}^p\mathcal{H}^0(C_f)\star E_n^{\lambda} = 0$ for $\lambda \notin S$, and so ${}^p\mathcal{H}^{-1}(C_f\star E_n^{\lambda}) = {}^p\mathcal{H}^{-1}(C_f)\star^0 E_n^{\lambda}$. Since maps $c_{\lambda}$ are surjective, we get from Proposition \ref{sec:pro-unit-character-1} \ref{item:8} applied to $\F = \mathcal{B}$ that the system of ${}^p\mathcal{H}^{-1}(C_f)\star^0 E_n^{\lambda}$ is essentially zero for all $n$ whenever $\lambda \notin S$, and so, by \ref{sec:pro-unit-character-1} \ref{item:1}, ${}^p\mathcal{H}^{-1}(C_f)$ is a character sheaf with characters in $S$. 
\end{proof}
\subsubsection{Generic monodromy.} We assume again that $G$ has connected center.
  For a complex $X \in D^b_G(\yy_P)$ such that $\hc_P(X)$ is monodromic, write $\Pi_{\lambda}X$ for the summand of $X$ with character $\lambda$. By Proposition \ref{centralunit}, $\Pi_{\lambda}$ can be explicitly written as $\invlim{n}(E_n^{\lambda,L}\star -) $, where $E_n^{\lambda,L}$ denotes the sheaf constructed in Proposition \ref{centralunit} for $G=L$, considered as a sheaf on $\yy_P$ via the functor $\iota_{P!}$.

For objects $X, C_1, \dots, C_k$ of a triangulated category, we write $X \in \langle C_1, C_2, ..., C_k\rangle$ if there exists a sequence of objects $X_1, \dots, X_k,$ with $X_1 = C_1, X_k = X$, such that for all $i < k$ $(X_{i-1}, X_i, C_{i})$ is a distinguished triangle.

The following proposition is a straightforward consequence of standard distinguished triangles in the constructible derived categories.
\begin{proposition}
  \label{shrieck_filtr} Let $X$ be a variety stratified by locally closed subvarieties $\{S_t\}_{t=1}^n$, and let $j_t: S_t \to X$ be the corresponding locally-closed embeddings. Assume that $S_k \subset \overline{S}_l$ implies $k < l$. Then for every $\mathcal{F} \in D^b(X)$ we have $\mathcal{F} \in \langle j_{t*}j^!_t\mathcal{F}\rangle_{t=1}^n$.
\end{proposition}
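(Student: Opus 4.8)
The plan is to reduce the statement to the standard distinguished triangles attached to a stratification, by induction on the number of strata. First I would set up the induction: let $U_k = \bigcup_{t \ge k} S_t$, which is an open subvariety of $X$ by the closure assumption (since $S_l \subset \overline{S}_k$ forces $l \le k$, the union of strata with index $\ge k$ is a union of ``top'' strata and hence open), with closed complement $Z_{k-1} = \bigcup_{t < k} S_t$. The deepest stratum $S_1$ is closed, and $U_2$ is its open complement. Writing $i: Z \hookrightarrow X$ and $j: U \hookrightarrow X$ for the closed and open embeddings of a complementary pair, I would use the standard triangle $i_* i^! \mathcal{F} \to \mathcal{F} \to j_* j^* \mathcal{F} \to i_* i^! \mathcal{F}[1]$.

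Next I would carry out the inductive step. Apply the triangle to the pair $(Z_{n-1}, S_n)$ inside $X$: this gives a distinguished triangle with outer terms $i_* i^! \mathcal{F}$, where $i: Z_{n-1} \to X$, and $j_{n*} j_n^! \mathcal{F} = j_{n*} j_n^* \mathcal{F}$ (here $j_n^! = j_n^*$ because $j_n$ is an open embedding, and $S_n$ is open in $X$). Now $Z_{n-1}$ with its induced stratification by $S_1, \dots, S_{n-1}$ satisfies the same hypotheses, so by the inductive hypothesis applied to $i^! \mathcal{F} \in D^b(Z_{n-1})$ we have $i^! \mathcal{F} \in \langle j_{t,Z*}' j_{t,Z}'^! (i^! \mathcal{F}) \rangle_{t=1}^{n-1}$, where $j_{t,Z}'$ are the locally-closed embeddings into $Z_{n-1}$. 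Pushing forward along the closed embedding $i$, which is exact and sends distinguished triangles to distinguished triangles, and using $i_* j_{t,Z*}' j_{t,Z}'^! i^! \mathcal{F} = j_{t*} j_t^! \mathcal{F}$ (since $j_t = i \circ j_{t,Z}'$ and $j_t^! = j_{t,Z}'^! i^!$), we conclude $i_* i^! \mathcal{F} \in \langle j_{t*} j_t^! \mathcal{F} \rangle_{t=1}^{n-1}$. Splicing this filtration together with the single triangle relating $i_* i^! \mathcal{F}$, $\mathcal{F}$, and $j_{n*} j_n^! \mathcal{F}$ yields $\mathcal{F} \in \langle j_{t*} j_t^! \mathcal{F} \rangle_{t=1}^{n}$, completing the induction; the base case $n = 1$ is trivial since then $X = S_1$ and $\mathcal{F} = j_{1*} j_1^! \mathcal{F}$.

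The only genuine point requiring care — and the place I would be most careful — is the bookkeeping of which strata are open versus closed: one must verify from the hypothesis ``$S_k \subset \overline{S}_l \implies k < l$'' that $\bigcup_{t \ge k} S_t$ is open in $X$ (equivalently that $\bigcup_{t < k} S_t$ is closed), so that the complementary-pair triangle applies at each stage, and that the induced stratification on each $Z_{k}$ inherits the same ordering property. Everything else is the formal manipulation of the recollement triangles and the compatibility of the six functors with composition of embeddings, which is routine. I would also remark that the same argument works verbatim in any of the equivariant or monodromic derived categories used later in the paper, since all that is used is the existence of the $(j^*, j_*)$ and $(i_*, i^!)$ adjunctions and the standard triangle, together with exactness of pushforward along a closed embedding.
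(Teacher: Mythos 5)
Your argument is correct and is exactly the standard recollement induction that the paper invokes (its proof is just ``Standard, see the proof of Lemma 6.2.5 in [HHH]''): the closure condition makes each $\bigcup_{t<k}S_t$ closed, the triangle $i_*i^!\mathcal{F}\to\mathcal{F}\to j_{n*}j_n^*\mathcal{F}$ peels off the open top stratum, and exactness of $i_*$ together with $j_t^!=j_{t,Z}'^!i^!$ splices in the inductive filtration on the closed complement. Nothing to add.
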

\begin{proof}
  See for example the proof of Lemma 6.2.5 in \cite{hhh}.
\end{proof}

We say that a character $\lambda \in \mathcal{C}(T)(\Qlbar)$ is $P$-generic, if its stabilizer in the full Weyl group $W$ is equal to $W_J$.

\begin{proposition}
  \label{sec:generic-monodromy}
  Let $\lambda \in \ct$ be $P$-generic for a parabolic subgroup $P \supset B$.   We have the following isomorphism of functors
  \[
    \Pi_{\lambda}\hc_{P*} = \Pi_\lambda \iota_{P!} \res^G_{P}.
\]
\end{proposition}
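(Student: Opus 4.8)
The strategy is to compute both sides by unwinding the definitions of $\hc_{P*}$, $\chi_P^G$, $\iota_{P*}$, and $\res^G_P$ in terms of the six operations on the relevant spaces, and to identify them after applying the projector $\Pi_\lambda$. Recall from \eqref{eq:6} that $\res^G_P = \iota_P^!\hc_{P*}$ on the relevant categories (more precisely, here one should use the Harish-Chandra transform $\hc^P_B$ composed with $\iota_P^!$ to land in $D^b(T)$, then re-induce), but the cleanest route is the following: by Proposition \ref{sec:harish-chandra-trans} applied with $Q = G$ we have $\chi_P^G\hc^G_P(-) \simeq \iota_{G*}\Spgr^G_P \star (-)$, and the identity functor is a direct summand. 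So $\hc_{P*}$ differs from $\iota_{P*}\res^G_P$ only by the ``off-diagonal'' summands of $\Spgr^G_P$, i.e.\ by the contributions of non-closed $G$-orbits on $G/P \times G/P$. The point of $P$-genericity is that these off-diagonal contributions carry monodromies lying in $W$-orbits strictly larger than $W_J\lambda$, hence are killed by $\Pi_\lambda$.

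First I would set up the explicit geometric model. Write $\hc_{P*}(\mathcal F)$ using the diagram with $G \times G/P$ and the maps $p, q$ from Section \ref{sec:harish-chandra-trans-1} (with $P$ in place of $B$); similarly express $\iota_{P*}\res^G_P(\mathcal F)$. The space $\yy_P = (G/U\times G/U)/L$ is stratified by $G$-orbits, indexed by $W^J$-cosets (or rather by $W_J\backslash W/W_J$ together with monodromy data along the $L/L = T/\cdot$ directions), and $\tilde\Delta_P$ is the closed stratum. Using Proposition \ref{shrieck_filtr} I would filter $\hc_{P*}(\mathcal F)$ by $j_{w*}j_w^!\hc_{P*}(\mathcal F)$ over these strata, and observe that $\iota_{P*}\res^G_P(\mathcal F)$ is precisely the ``closed stratum piece'' $j_{e*}j_e^!\hc_{P*}(\mathcal F) = \iota_{P*}\iota_P^!\hc_{P*}(\mathcal F)$.

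The main work is then to show that after applying $\Pi_\lambda$, all the strata $j_w$ with $w \notin W_J$ contribute zero. This is where $P$-genericity enters decisively: on the stratum corresponding to $w$, the restriction $j_w^!\hc_{P*}(\mathcal F)$ is (up to the $L$-adjoint-action directions) controlled by the monodromy of $\hc^P_B(\mathcal F)$ twisted by $w$, i.e.\ it lives in characters of the form $W_{wJw^{-1}\cap J}$-type orbits around $w\lambda$; since $\lambda$ is $P$-generic its $W$-stabilizer is exactly $W_J$, so $w\lambda \notin W_J\lambda$ for $w \notin W_J$, and there are no nonzero morphisms between complexes of character $\lambda$ and complexes whose characters avoid $W\lambda \cap$ (the relevant set) — by the standard $\Hom$-vanishing for distinct monodromies recalled before Proposition \ref{centralunit}, together with the explicit description $\Pi_\lambda = \invlim{n}(E_n^{\lambda,L}\star -)$. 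Concretely, one shows $\Pi_\lambda \circ j_{w*} = 0$ for $w \notin W_J$ by checking that $\iota_P^! \hc^P_B$ applied to $j_{w*}(\text{anything})$ has monodromy outside $W_J\lambda$, using the base-change computation of $\hc^P_B j_{w*}$ in terms of convolution with the standard sheaf $\DD_w$ on the $T$-directions (this produces a $w$-shift of monodromy). Feeding this into the filtration from Proposition \ref{shrieck_filtr} collapses everything to the closed stratum, giving the claimed identity of functors.

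\textbf{Main obstacle.} The delicate point is the bookkeeping of monodromy under $j_w^!$ on the non-closed strata of $\yy_P$: one must verify that the ``Levi direction'' $L/\mathrm{Ad}(L)$ contributes monodromy that genuinely moves $\lambda$ out of its $W_J$-orbit under the relevant $w$, rather than staying inside it — this is exactly the content of $P$-genericity, but making it precise requires carefully matching the $T$-monodromy of $\hc^P_B$ after restriction to a stratum with the combinatorics of double cosets $W_J\backslash W/W_J$, and handling the pro-object subtleties (the projector $\Pi_\lambda$ is an inverse limit, so one needs that the vanishing is uniform in $n$, which follows from the $\Hom$-vanishing between fixed monodromic summands being an honest vanishing, not merely an essentially-zero statement). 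Once that combinatorial/monodromic input is in place, the rest is a formal consequence of Proposition \ref{shrieck_filtr} and Proposition \ref{sec:harish-chandra-trans}.
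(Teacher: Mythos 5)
Your overall architecture coincides with the paper's: write $\iota_{P*}\res^G_P = \iota_{P*}\iota_P^!\hc_{P*}$, form the open--closed distinguished triangle for $\tilde\Delta_P \hookrightarrow \yy_P$, stratify the open complement by (cosets of) $W_J$ in $W$, invoke Proposition \ref{shrieck_filtr}, and kill each non-closed stratum after applying $\Pi_\lambda$ using $P$-genericity. The issue is in the one step you yourself flag as the main obstacle: your proposed mechanism for the vanishing on the stratum $\tilde V_w$ ("$\hc^P_B j_{w*}$ acts like convolution with $\DD_w$, producing a $w$-shift of monodromy of $\hc^P_B(\F)$") both presupposes that $\F$ has controlled monodromy and, even granting that, does not close the argument: the proposition is an isomorphism of functors on all of $D^b_G(G)$, and for an $\F$ whose "input character" happens to be $w^{-1}\lambda$ your bookkeeping would place the stratum-$w$ contribution back in character $\lambda$, where $\Pi_\lambda$ does not kill it. So as written the key vanishing is not established.

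The paper's actual mechanism avoids tracking the monodromy of $\F$ altogether and uses only its $G$-equivariance. On the stratum $\tilde V_w = \{(xU,yU): x^{-1}y \in Ug_wP\}$ one picks a cocharacter $\mu^{\vee}$ landing in the center of $L$ (equivalently, $W_J$-invariant) with $\langle\lambda,\mu^{\vee}-w(\mu^{\vee})\rangle\neq 0$; such a $\mu^{\vee}$ exists for every $w\notin W_J$ precisely because $\operatorname{Stab}_W(\lambda)=W_J$. Conjugation by $\mu^{\vee}(t)$ sends $x^{-1}y=g_wl$ to $\mu^{\vee}(t)w(\mu^{\vee}(t))^{-1}(x^{-1}y)$, so left-diagonal $G$-equivariance of the complex forces genuine equivariance --- hence trivial monodromy --- along the one-parameter subgroup $\mu^{\vee}-w(\mu^{\vee})$ acting on one factor. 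Any character occurring in the stratum-$w$ piece must therefore pair to zero with $\mu^{\vee}-w(\mu^{\vee})$, which $\lambda$ does not; this is what makes $\Pi_\lambda$ annihilate every non-closed stratum uniformly in $\F$ (and uniformly in the index $n$ of the pro-projector, since it is an honest vanishing of the $\lambda$-summand). If you replace your monodromy-shift heuristic with this equivariance argument, the rest of your outline goes through as the paper's proof does.
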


 \begin{proof} 
  By~\eqref{eq:6} we have $\iota_{P!} \res^G_{P}(\mathcal{F}) = \iota_{P!}\iota_{P}^!\hc_{P*}(\mathcal{F})$. Let \[j: \yy_P \backslash \tilde{\Delta}_P \to \yy_P\] be the complementary open embedding. We have a distinguished triangle
\[
  (\iota_{P!}  \res^G_L(\mathcal{F}), \hc_P(\mathcal{F}), j_*j^*\hc_P(\mathcal{F}) ).
\]
  We will now prove that $\Pi_{\lambda}j_*j^*\hc_P(\mathcal{F})= 0$. We have
\[
  \hc^P_B(j_*j^*\hc_P(\mathcal{F})) = j'_*\mathcal{G},
\]
where $j'$ is the embedding of an open subset \[V = \{(xU,yU)T \in \yy, x^{-1}y \notin P\},\] and $\mathcal{G}\in D^b_G(V)$. Let $\tilde{V}$ be the preimage of $V$ in $G/U \times G/U$. $\tilde{V}$ has a stratification by locally closed subsets \[\tilde{V}_w = \{(xU, yU), x^{-1}y \in Ug_wP\},\] where $w$ runs through minimal length representatives of non-unital congruence classes in $W/W_J$, and $g_w$ are some lifts of $w$ to $N_G(T)$. Let $\tilde{j}'_{w}:\tilde{V}_w \to \tilde{V}$ be the corresponding locally closed embeddings. Consider $j'_*\mathcal{G}$ as a $T$-equivariant complex on $G/U \times G/U$. By Proposition \ref{shrieck_filtr}, $j_*'\mathcal{G} \in \langle \tilde{j}'_{w*}\tilde{j}_w'^!j_*'\mathcal{G}\rangle_{w}$, so it is enough to prove that $\Pi_{\lambda}\tilde{j}'_{w*}\tilde{j}_w'^!j_*'\mathcal{G} = 0$. Consider a point $(xU, yU) \in \tilde{V}_{w}$. Choose $x, y$ in the corresponding congruence class so that $x^{-1}y = g_wl$, $l \in L.$ Since $\lambda$ is $P$-generic, there is a coweight $\mu^{\vee}:\mathbb{G}_m \to T$ with $\langle\lambda, \mu^{\vee} - w(\mu^{\vee})\rangle \neq 0$, and such that $\mu^{\vee}(\mathbb{G}_m)$ is in the center of $L$. Then, for $t \in \mathbb{G}_m$, we have
\[
  \textrm{Ad}(\mu^{\vee}(t))(x^{-1}y) = \mu^{\vee}(t)g_wl\mu^{\vee}(t)^{-1} = \mu^{\vee}(t)w(\mu^{\vee}(t))^{-1}(x^{-1}y),
\]
and so a $G$-equivariant sheaf on $\tilde{V}_m$ that is equivariant with respect to the right diagonal $T$-action, is also equivariant with respect to the action of $\mathbb{G}_m$ defined by the coweight $\mu^{\vee} - w(\mu^{\vee})$ on one of the components. Since a non-zero equivariant complex can not have non-trivial monodromy, the result follows.
\end{proof}
\begin{corollary}
  \label{sec:generic-monodromy-1}
 Assume that ${\lambda}\in \ct$ is $P$-generic for a parabolic $P$. Restricted to the category $P^{\lambda}(G)$, the functor \[\Pi_\lambda\res_{P}^G:P^{\lambda}(G) \to P^{\lambda}(L)\] is a braided monoidal equivalence of categories. 
\end{corollary}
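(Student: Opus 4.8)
The plan is to leverage Proposition \ref{sec:generic-monodromy} together with the fact that the parabolic restriction functor, suitably interpreted, is fully faithful on generic-character objects. First I would observe that on $P^\lambda(G)$, the functor $\Pi_\lambda \res^G_P$ can be identified with $\Pi_\lambda \iota_P^! \hc_{P*}$ composed with projection to the generic character summand; since $\res^G_P$ is t-exact (by \cite{bezrukavnikovParabolicRestrictionPerverse2021}), it lands in $P^\lambda(\yy_P)$ after applying $\iota_{P*}$, and I would check that the essential image lands in the full subcategory $P^\lambda(\yy_P)$ on which $\Pi_\lambda$ is the identity. The monoidality of $\res^G_P$ (equivalently, of $\hc_P$ and $\iota_P^!$) gives the monoidal structure on the functor, and the braiding is transported because $\hc_P$ is a braided monoidal functor between the braided categories $D^b_G(G)$ and $D^b_G(\yy_P)$ (with the convolution braidings); the perverse truncation $\star^0$ then inherits this. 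So the main content is that $\Pi_\lambda \res^G_P : P^\lambda(G) \to P^\lambda(\yy_P)$ (target being the generic summand, which by Proposition \ref{sec:generic-monodromy} can be computed via $\iota_{P*}\res^G_P$) is an equivalence.

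To prove it is an equivalence, I would construct the inverse as (a truncation of) parabolic induction $\Ind^G_P = \chi^G_P \iota_{P*}$ restricted to the generic summand, and use Proposition \ref{sec:harish-chandra-trans} in the form $\chi^Q_P \hc^Q_P(-) \simeq \iota_{Q*}\Spgr^Q_P \star (-)$. The key point is that after projecting to a $P$-generic character $\lambda$, the parabolic Springer sheaf contribution $\Spgr^Q_P$ collapses: its monodromic pieces in characters other than $\lambda$ are killed by $\Pi_\lambda$, and on the $\lambda$-summand the remaining contribution is just the monoidal unit. Concretely, $\Pi_\lambda \chi^G_P \iota_{P*} \circ \Pi_\lambda \iota_P^! \hc_{P*} \simeq \Pi_\lambda \chi^G_P \hc^G_P = \Pi_\lambda(\iota_{G*}\Spgr^G_P \star -)$, and $P$-genericity forces $\Pi_\lambda(\iota_{G*}\Spgr^G_P \star -)$ to be $\Pi_\lambda$ applied to a single copy of the identity — the higher terms in the Springer sheaf (indexed by non-unit cosets $W^J$) contribute sheaves whose restriction to $\tilde\Delta_B$ has monodromy outside $W\lambda$, by the same $\mathbb{G}_m$-equivariance argument as in the proof of Proposition \ref{sec:generic-monodromy}. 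Conversely, $\Pi_\lambda \hc_{P*} \chi^G_P \iota_{P*} \simeq \id$ on $P^\lambda(\yy_P)$ by the analogous computation with roles of $P$ and $G$ reversed, together with Proposition \ref{sec:generic-monodromy} applied to the Levi.

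I expect the main obstacle to be the bookkeeping around t-structures and the generic truncation: one must verify that the candidate inverse functor $\Pi_\lambda \Ind^G_P$ actually preserves perversity on the relevant subcategories (so that the adjunction units/counits become isomorphisms after $\star^0$-truncation rather than only on the derived level), and that the monoidal and braided structures on the two sides match under the equivalence — i.e. that the braiding constraint of $\hc_P$ is compatible with the perverse truncation $\star^0$ used to define the symmetric structure on $P^\lambda(G)$ via Proposition \ref{sec:twist-cor}. This amounts to checking that the distinguished triangles expressing the failure of $\chi^G_P\hc^G_P$ to be the identity (the summands from $W^J \setminus \{e\}$) vanish after $\Pi_\lambda$, which reduces — via Proposition \ref{shrieck_filtr} and the stratification by $(N_G(T)$-lifts of$)$ $W^J$-cosets — to the $P$-genericity of $\lambda$, exactly as in the proof just given for Proposition \ref{sec:generic-monodromy}. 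Once that vanishing is in hand, the equivalence and its compatibility with the braided monoidal structures follow formally.
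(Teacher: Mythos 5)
Your reduction of the equivalence to the composite $\Pi_\lambda\chi_P^G\hc_P^G \simeq \Pi_\lambda(\Spgr_P^G\star -)$ contains the essential gap. First, the identification $\Pi_\lambda\chi_P^G\circ\Pi_\lambda\hc_{P}^G \simeq \Pi_\lambda\chi_P^G\hc_P^G$ is not valid: for $\mathcal{G}\in P^\lambda(G)$ the complex $\hc_P^G(\mathcal{G})$ splits into $|W^J|$ monodromy summands indexed by $W\lambda/W_J$, and $\chi_P^G$ applied to a $w\lambda$-summand again produces an object whose character lies in the $W$-orbit of $\lambda$, so the outer $\Pi_\lambda$ (which on $D^b_G(G)$ only sees the $W$-orbit) does not kill these contributions; the inner $\Pi_\lambda$ cannot be dropped. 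Second, and more seriously, the claim that $P$-genericity forces $\Pi_\lambda(\Spgr_P^G\star-)$ to be the identity is false: $\Spgr_P^G$ is a unipotently monodromic complex supported on the unipotent variety, so convolution with \emph{any} of its direct summands preserves the character $\lambda$, and $\Pi_\lambda$ removes nothing. (Concretely, for generic $\lambda$ the functor $\Spgr_P^G\star(-)$ acts on $P^\lambda(G)$ roughly by tensoring with $\operatorname{Res}_{W_J}\operatorname{Ind}_{W_J}^W\mathrm{triv}$, which has dimension $|W^J|>1$.) The $\mathbb{G}_m$-equivariance argument of Proposition \ref{sec:generic-monodromy} applies to the strata of $\yy_P\setminus\tilde\Delta_P$, not to the summands of the Springer sheaf, and does not transfer to this situation.

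The correct division of labor, which is what the paper does, is the opposite of yours: genericity directly gives $\Pi_\lambda\hc_P^G\chi_P^G\F\simeq\F$ for $\F\in D^\lambda(L)$ (restriction after induction), because by base change $\hc_B^P\hc_P^G\chi_P^G\F$ is built from pieces $\Phi_w(\F)$ with monodromy $w\lambda$, $w\in W^J$, and for $w\neq 1$ these fall outside the $W_J$-orbit of $\lambda$ precisely by $P$-genericity. You assert this direction ``by the analogous computation,'' and that part is fine. But for the other composition $\chi_P^G\circ\Pi_\lambda\hc_P^G\simeq\operatorname{id}$ on $P^\lambda(G)$ a formal argument cannot suffice; one needs the non-trivial input that every irreducible object of $P^\lambda(G)$ occurs as a direct summand of $\chi_P^G\F$ for some $\F\in P^\lambda(L)$ (Lusztig, \emph{Character sheaves IV}, 17.12), which together with Proposition \ref{sec:harish-chandra-trans} (the identity is a direct summand of $\chi_P^G\hc_P^G$) shows that the canonical map $\mathcal{G}\to\chi_P^G\Pi_\lambda\hc_P^G\mathcal{G}$ is an isomorphism. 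This generation statement is absent from your proposal and is exactly what your incorrect collapse of the Springer sheaf was standing in for. Your remarks on monoidality and on transporting the braiding via the central structure of $\hc_*$ are consistent with the paper's argument.
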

\begin{proof}

We first show that $\Pi_\lambda\res_P^G$ is an equivalence. We claim that the inverse functor is given by $\chi_P^G$. By base change, it is easy to see that, for any $\F \in D^b(\yy_P)$,
\[
\hc^P_B\hc_{P}^G\chi_P^G\F \in \langle\Phi_w(\F)[d_w]\rangle_{w \in W^J},
\]
where $d_w$ are some integral homological shifts, $d_1 = 0$, and $\Phi_w = p_*q^!$ is defined from the diagram
\[
   \begin{tikzcd} & \yy_w^P \arrow[ld,"p"'] \arrow[rd,"q"] & \\ \yy_B & & \yy_B 
\end{tikzcd}
\]
where
\[
  \yy_w^P = \{(g, xB, yB) \in  G \times G/B \times G/B: (xB, yB) \in \cO_w\},
\]
and $p(g,xB,yB) = (xU,gxU), p(g,xB,yB) = (yU,gyU)$. If $\F$ is mo\-nodromic with monodromy $\lambda$, $\Phi_w(\F)$ is monodromic with monodromy $w\lambda$. It follows that, for $\F \in D^{\lambda}(L)$,
\[
  \hc_B^P\Pi_{\lambda}\hc_P^G\chi_P^G \F \simeq \Pi_\lambda\hc_B^P\hc_P^G\chi_P^G\F \simeq \hc_B^P\F,
\]
and so
\(  \Pi_{\lambda}\hc_P^G\chi_P^G \F \simeq \F.
\)
It follows that, for any $\F \in D^{\lambda}(L)$,
\[
  \chi_P^G\Pi_\lambda\hc_P^G\chi_P^G\F \simeq \chi_P^G\F.
\]

From Proposition \ref{sec:harish-chandra-trans}, to show that $\chi_P^G$ is left inverse to $\Pi_{\lambda}\hc_P^G$, it is enough to show that any irreducible object $\mathcal{G} \in P^{\lambda}(G)$ is obtained as a summand of $\chi_P^G$ for some $\F \in P^{\lambda}(L)$, so that the corresponding morphism
\[
  \mathcal{G} \to \chi_P^G\Pi_\lambda\hc_P^G\mathcal{G}
\]
is an isomorphism. This follows from \cite[\S{17.12}]{lusztigCharacterSheavesIV1986}.

 Since monodromic sheaves with different monodromies are orthogonal with respect to convolution, it follows from Proposition \ref{sec:generic-monodromy} that $\Pi_{\lambda}\iota_{P!}\res^G_{P}$ is monoidal, since $\hc_{P*}$ and $\Pi_{\lambda}$ are monoidal.

The functor $\hc_*$ is equipped with the central structure (see \cite{ginzburgAdmissibleModulesSymmetric1989}, \cite{bezrukavnikovCharacterDmodulesDrinfeld2012}). It is easy to see that, for complexes $\F$ with $\hc_*(\F)$ supported on $\tilde{\Delta}_P$, this central structure restricts to the braiding structure coming from $P$-equivariance, and so $\Pi_\lambda\res_{P}^G$ respect the braiding. 
\end{proof}
\subsection{Exterior powers of the Springer sheaf.} We recall some of the notations from the introduction. Let $P$ again be an arbitrary parabolic. Define
\[
  \mathcal{N}_{P} = \{(x, gP)\in G\times G/P: x \in {}^gU\}.
\]

Let $\pi: \mathcal{N}_P \to G$ be the natural projection, and
define $\Spgr = \pi_*\cc_{\mathcal{N}_P}[2 \dim U].$

Let now $G = \GL_n = \GL(V)$, and $W=S_n$. We keep the notation of Section \ref{sec:action-repg}, writing $P = P_1$ for convenience. We consider 
\[ \Spgr_n := \Spgr_{P}
\] as an object in $P_{\GL_n}(\GL_n).$ By Proposition \ref{sec:twist-cor}, $P_{\GL_n}(\GL_n)$ is a symmetric monoidal category. Our main result is 
\begin{theorem}
  \label{sec:lie-group-case}
  Exterior powers of $\Spgr_n$ with respect to the symmetric monoidal structure $\star^0$ satisfy the following: for $k \geq 0,$ there is an isomorphism
  \[\wedge^k\Spgr_n \simeq \Spgr_{\wedge^k}.\]
\end{theorem} The proof of this theorem occupies the rest of the section.
It is by induction in $n$ and $k$. For $n = 1$ the statement is obvious. From \eqref{eq:6}, for any $\F \in D^b_G(G)$, we have a natural transformation \[ \tilde{\Phi}_n: \operatorname{Ind}^G_{P}\operatorname{Res}^G_{P}(-)\to\Spgr_n \star (-) , \] coming from the isomorphism \[\Spgr_n\star (-) = \chi_P^G \hc_P^G(-),\] and the natural transformation  \[ \iota_{P!}\iota_P^!\hc_P^G(-) \to \hc_P^G(-). \] Assume that the statement of the Theorem \ref{sec:lie-group-case} is known for $n$ and $k$. Define a morphism \[ \Phi_n^k: \Spgr_{\wedge^{k+1}}\to \wedge^{k+1}\Spgr_n , \] as the composition
\[ \Spgr_{\wedge^{k+1}}\xrightarrow{\alpha}\operatorname{Ind}^G_{P}\operatorname{Res}^G_{P}(\Spgr_{\wedge^{k}})\xrightarrow{\tilde{\Phi}_n(\Spgr_{\wedge^k})}\Spgr_n\star^0\wedge^{k}\Spgr_n \xrightarrow{\beta} \wedge^{k+1}\Spgr_n ,
\]
where $\alpha$ (resp. $\beta$) is the inclusion from (resp. the projection to) the corresponding direct summand. We have the following key
\begin{proposition}
  \label{sec:exter-powers-spring}
  Assume that Theorem \ref{sec:lie-group-case} holds for all $\mathrm{GL}_{n'}, n' < n,$ so that the $\Phi_{n'}^{k'}$ are isomorphisms for all $k'$, and the same holds also for $\mathrm{GL}_n$, fixed $k$ and $\Phi_n^k$. Assume also that, for $\lambda \in \mathcal{C}(T)(\Qlbar)$, we have $\operatorname{Stab}_W\lambda \neq W$. Then
\[\Phi_n^k\star \id_{E_{m}^{{\lambda}}}: \Spgr_{\wedge^{k+1}}\star^0
  E_m^{\lambda}\to\wedge^{k+1}\Spgr_n\star^0 E_{m}^{{\lambda}}
\] is an isomorphism for all $m$.
\end{proposition}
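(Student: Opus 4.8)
The plan is to show that the cone $C$ of $\Phi_n^k$ is annihilated by $(-)\star E_m^\lambda$; since $(-)\star E_m^\lambda$ is triangulated and $\Phi_n^k\star^0\id_{E_m^\lambda}=\pH^0(\Phi_n^k\star\id_{E_m^\lambda})$, this gives the assertion. Applying the monoidal, $t$-exact, and conservative functor $\operatorname{Res}_B^G$ (conservative on $P_G(G)$ since there are no nontrivial cuspidal pairs for $\GL_n$, hence, being $t$-exact, conservative on $D^b_G(G)$) and using \eqref{eq:7} with $\F=C$, the problem reduces to proving
\[
  \operatorname{Res}_B^G(C)\star\Bigl(\bigoplus_{\mu\in W\lambda}\mathfrak{E}_m^\mu\Bigr)=0.
\]
The hypothesis $\operatorname{Stab}_W\lambda\neq W$ forces $\lambda\neq\mathbf{1}$, so no $\mu\in W\lambda$ is trivial. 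By $t$-exactness the perverse cohomology sheaves of $\operatorname{Res}_B^G(C)$ are subquotients in $P(T)$ of $\operatorname{Res}_B^G(\Spgr_{\wedge^{k+1}})$ and $\operatorname{Res}_B^G(\wedge^{k+1}\Spgr_n)$, hence unipotently monodromic; and for a shifted unipotent local system $\mathcal{E}$ on $T$ and $\mu\neq\mathbf{1}$ one has $\mathcal{E}\star\mathfrak{E}_m^\mu=0$: filtering $\mathcal{E}$ by copies of $\underline{\Qlbar}_T$ and $\mathfrak{E}_m^\mu$ by copies of $\mathcal{L}_\mu$, this comes down to $\underline{\Qlbar}_T\star\mathcal{L}_\mu=0$, which holds because the relevant fibre is the cohomology (with compact supports) of the nontrivial rank-one local system $\mathcal{L}_{\mu^{-1}}$ on $T$, hence zero.

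Therefore the only possible obstruction is the presence of point-supported (``skyscraper'') summands in $\operatorname{Res}_B^G(C)$, equivalently of constituents of the perverse cohomology sheaves of $C$ that are supported on $\mathcal{N}_G$ (constituents of Springer type). The entire content of the proposition thus amounts to controlling these: one must show that, after convolution with $E_m^\lambda$, the $\mathcal{N}_G$-supported part of $\ker\Phi_n^k$ and $\operatorname{coker}\Phi_n^k$ disappears. This is where the inductive hypothesis and the genericity of $\lambda$ enter. I would choose a standard parabolic $Q\supset B$ with $W_{M_Q}=W_\lambda$ — which exists and is \emph{proper} because $\GL_n$ has connected centre and $\operatorname{Stab}_W\lambda\neq W$ — so that $\lambda$ is $Q$-generic and $M=M_Q\cong\GL_{n_1}\times\cdots\times\GL_{n_r}$ with all $n_i<n$, and then transport the question to $M$ via $\operatorname{Res}_Q^G$, using the analogue of Proposition \ref{centralunit} for $M$ together with transitivity of parabolic restriction to describe $\operatorname{Res}_Q^G(E_m^\lambda)$ as a sum of the pro-units $E_m^{w\lambda,M}$. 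On $M$ the factorwise Theorem \ref{sec:lie-group-case} is available by induction, while the construction of $\Phi_n^k$ — through the summand inclusion $\alpha$, the transformation $\tilde\Phi_n$ whose cone is $\chi_{P_1}^G(j_*j^*\hc_{P_1}^G(-))$ (built from the line-fixing parabolic $P_1$, with Levi $\GL_1\times\GL_{n-1}$), and the projection $\beta$ — has to be pushed through $\operatorname{Res}_Q^G$ using the Mackey and transitivity identities $\hc_{P}^Q\hc_Q^G=\hc_P^G$, $\chi_Q^G\chi_P^Q=\chi_P^G$; the contribution of the off-diagonal term $j_*j^*\hc_{P_1}^G(-)$ is then killed after $\star E_m^\lambda$ by an argument in the spirit of Proposition \ref{sec:generic-monodromy}, the coweight witnessing $Q$-genericity of $\lambda$ producing extra $\mathbb{G}_m$-equivariance on the relevant strata.

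The main obstacle is exactly this last analysis: matching $\operatorname{Res}_Q^G(\Phi_n^k)$ with the inductively known morphisms on the Levi factors, keeping careful track of cohomological shifts, and verifying that the genericity of $\lambda$ removes precisely the Springer-type error terms that plain parabolic restriction $\operatorname{Res}_B^G$ cannot see — equivalently, that convolution with $E_m^\lambda$ cuts $C$ down to a complex whose perverse cohomology sheaves have no $\mathcal{N}_G$-supported constituents, after which the torus computation of the first two paragraphs finishes the proof. The remaining steps (exactness and conservativity of $\operatorname{Res}_B^G$, the vanishing $\underline{\Qlbar}_T\star\mathcal{L}_\mu=0$, and the description of $\operatorname{Res}_Q^G(E_m^\lambda)$) are routine given the results already established.
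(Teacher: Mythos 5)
Your first reduction is essentially vacuous, and in fact starts from a false premise. You claim that the perverse cohomology sheaves of $\operatorname{Res}_B^G(C)$ are (shifted) unipotent local systems on $T$ and hence are killed by convolution with $\mathfrak{E}_m^\mu$, $\mu\neq\mathbf{1}$. But $\Spgr_{\wedge^{k+1}}$ and $\wedge^{k+1}\Spgr_n$ are supported on the unipotent variety, so $\operatorname{Res}_B^G$ of the source, the target, and the cone $C$ is supported entirely at $e\in T$: these are skyscraper-type objects, not local systems, and $\delta_e\star\mathfrak{E}_m^\mu=\mathfrak{E}_m^\mu\neq 0$. So the filtration argument with $\underline{\Qlbar}_T\star\mathcal{L}_\mu=0$ applies to nothing, and the ``only possible obstruction'' you then isolate is in fact the whole statement. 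The proposition cannot be reached through $\operatorname{Res}_B^G$ alone; this is exactly why the paper routes the argument through $\Pi_\lambda\operatorname{Res}_Q^G$ for the proper parabolic $Q$ with $W_{M_Q}=W_\lambda$, where Corollary \ref{sec:generic-monodromy-1} makes that functor a (braided monoidal) equivalence on $P^\lambda(G)$, hence conservative on the relevant morphism.

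Your final two paragraphs do identify this correct strategy — pass to the Levi $M\cong\prod\GL_{n_i}$ with all $n_i<n$ via $Q$-genericity of $\lambda$, use Mackey and the transitivity of $\hc$ and $\chi$, and invoke the inductive hypothesis factorwise — but you explicitly defer its execution as ``the main obstacle.'' That execution is the entire content of the paper's proof: one applies the Mackey formula together with Proposition \ref{sec:exter-powers-spring-6} to compute $\Pi_\lambda\operatorname{Res}_Q^G$ of both sides of the morphism $\operatorname{Ind}_P^G\operatorname{Res}_P^G(\Spgr_{\wedge^k})\star^0 E_m^\lambda\to\Spgr_n\star^0\wedge^k\Spgr_n\star^0 E_m^\lambda$ as explicit direct sums over double cosets $Q\backslash G/P$ of inductions of restrictions of $\Spgr_{\wedge^k}$ (using $\Spgr_n=\operatorname{Ind}_P^G\delta_e^L$ and the independence of $\operatorname{Res}$ of the choice of parabolic), and then identifies $\Phi_{n,\lambda}^k$ as the restriction of this map to matching direct summands, where the inductive hypothesis for $n'<n$ applies. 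Without carrying out this bookkeeping — and without noting that $\Pi_\lambda$ can be dropped because $P^\lambda(\yy_Q)$ is a monoidal ideal containing $\operatorname{Res}_Q^G E_m^\lambda$ — the proof is not complete; the remaining steps are not ``routine.''
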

Before proving Proposition \ref{sec:exter-powers-spring}, we state the following proposition that is to be used repeatedly.
\begin{proposition}
\label{sec:exter-powers-spring-6}
   Assume that $A\in D^b_G(G)$ satisfies $\hc_*(A) \simeq \operatorname{Res}_B^G(A)$. Then there is a canonical natural isomorphism of functors
    \[ \res_P^G(-\star A)\simeq \res_P^G(-)\star\res_P^G(A),
    \] 
\end{proposition}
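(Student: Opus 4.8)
\textbf{Proof strategy for Proposition \ref{sec:exter-powers-spring-6}.}

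The plan is to reduce the statement to the well-known compatibility of parabolic restriction with convolution up to a correction term, and then to use the hypothesis $\hc_*(A) \simeq \res_B^G(A)$ to kill that correction term. Concretely, I would proceed as follows. First, recall from \eqref{eq:6} that $\res_P^G = \iota_P^!\hc_{P*}$, so it suffices to understand $\hc_{P*}(-\star A)$. Since $\hc_{P*}$ is monoidal with respect to convolution (Section \ref{sec:harish-chandra-trans-1}), we have $\hc_{P*}(X \star A) \simeq \hc_{P*}(X)\star\hc_{P*}(A)$, naturally in $X$. Applying $\iota_P^!$ then gives a map $\res_P^G(X\star A) \simeq \iota_P^!(\hc_{P*}(X)\star\hc_{P*}(A))$, and the task becomes to identify the right-hand side with $\res_P^G(X)\star\res_P^G(A) = \iota_P^!\hc_{P*}(X)\star\iota_P^!\hc_{P*}(A)$, where the convolution on the right is on $D^b_L(L)$.

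Next, I would analyze how the convolution on $\yy_P$ interacts with the restriction $\iota_P^!$ to the closed substack $\tilde{\Delta}_P \simeq G\backslash\tilde{\Delta}_P = P\backslash L$. The key geometric input is that the closed orbit $\tilde{\Delta}_P$ behaves well under one-sided convolution: restricting one factor of a convolution on $\yy_P$ to $\tilde{\Delta}_P$ and then convolving in $D^b_L(L)$ agrees, in general, with convolving in $\yy_P$ and restricting, \emph{provided} the other factor is itself supported on $\tilde{\Delta}_P$ (equivalently, is in the image of $\iota_{P*}$). This is a base-change computation on the relevant correspondence diagrams, parallel to the diagrams appearing in the proof of Proposition \ref{sec:harish-chandra-trans}. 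The hypothesis $\hc_*(A)\simeq\res_B^G(A)$ — note $\hc_* = \hc_B^G{}_*$, so this says the Harish-Chandra transform of $A$ is concentrated on $\tilde{\Delta}_B$, and hence its $\hc_B^P$-pushforward, i.e. $\hc_{P*}(A)$, is concentrated on $\tilde{\Delta}_P$ — is exactly what makes $\hc_{P*}(A) \simeq \iota_{P*}\iota_P^!\hc_{P*}(A) = \iota_{P*}\res_P^G(A)$. So $\hc_{P*}(A)$ is supported on the closed orbit, and the general base-change identity applies with $A$ playing the role of the "good" factor.

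Putting the pieces together: $\res_P^G(X\star A) \simeq \iota_P^!(\hc_{P*}(X)\star\hc_{P*}(A)) \simeq \iota_P^!(\hc_{P*}(X)\star\iota_{P*}\res_P^G(A))$, and by the support property of one-sided convolution against $\iota_{P*}$ this is canonically isomorphic to $(\iota_P^!\hc_{P*}(X))\star\res_P^G(A) = \res_P^G(X)\star\res_P^G(A)$, with all isomorphisms natural in $X$ because they are built from base change and the monoidality of $\hc_{P*}$. I would also record that these identifications are compatible with the associativity and unit constraints, so that the resulting isomorphism of functors is genuinely "canonical" in the monoidal sense.

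The main obstacle I anticipate is the careful bookkeeping in the base-change step: one must set up the correspondence diagram computing $\iota_P^!(\mathcal{F}\star\mathcal{G})$ for $\mathcal{G}$ supported on $\tilde{\Delta}_P$, verify that the relevant square is Cartesian (or becomes so after the support restriction), and check that the $!$-pullbacks and $*$-pushforwards commute as needed — this is where the identification $G\backslash\tilde{\Delta}_P = P\backslash L$ and the precise form of the convolution on $\yy_P$ (via the identification $D^b_G(\yy_P)\simeq D^b_{U^2}(G/_{\Ad}L)$ from Section \ref{sec:harish-chandra-trans-1}) must be used honestly. The rest is formal consequence of the monoidality of $\hc_{P*}$ and the hypothesis on $A$.
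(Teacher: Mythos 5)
Your overall strategy is the same as the paper's: reduce to the monoidality of $\hc_{P*}$, show that $\hc_P^G(A)$ is supported on the closed orbit $\tilde{\Delta}_P$ so that $\hc_P^G(A)\simeq\iota_{P*}\res_P^G(A)$, and finish by base change for one‑sided convolution against an object pushed forward from $\tilde{\Delta}_P$. The base‑change bookkeeping you flag as the main obstacle is indeed what the paper dismisses in one line ("the base change isomorphism"), so no complaint there.

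There is, however, one step whose justification as written is wrong: you say that the hypothesis puts $\hc_B^G(A)$ on $\tilde{\Delta}_B$ and that "its $\hc_B^P$-pushforward, i.e.\ $\hc_{P*}(A)$, is concentrated on $\tilde{\Delta}_P$". The functor $\hc_B^P$ goes from $D^b(\yy_P/G)$ to $D^b(\yy_B/G)$, and the composition rule is $\hc_B^P\hc_P^G=\hc_B^G$; so $\hc_P^G(A)$ is not obtained from $\hc_B^G(A)$ by applying $\hc_B^P$ — rather, $\hc_B^G(A)$ is obtained from $\hc_P^G(A)$. What you actually know is that $\hc_B^P\bigl(\hc_P^G(A)\bigr)$ is supported on $\tilde{\Delta}_B$, and you must deduce from this that $\hc_P^G(A)$ itself is supported on $\tilde{\Delta}_P$; that implication is not automatic. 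The paper closes this gap using Proposition \ref{sec:harish-chandra-trans}: since the identity is a direct summand of $\chi_B^P\hc_B^P$, the complex $\hc_P^G(A)$ is a direct summand of $\chi_B^P\hc_B^P\hc_P^G(A)=\chi_B^P\hc_B^G(A)\simeq\chi_B^P\res_B^G(A)$, and a direct check on the correspondence $Z_{B,P}$ shows that $\chi_B^P$ carries complexes supported on $\tilde{\Delta}_B$ to complexes supported on $\tilde{\Delta}_P$. With that substitution your argument matches the paper's proof; without it, the key support claim is unproved.
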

\begin{proof}
  Under the assumption of the Proposition, we have that
  \[
    \chi_B^P\hc^G_B(A) \simeq \chi_B^P\res^G_B(A)
  \]
  is supported on $\tilde{\cO}_1$ (see \eqref{eq:6} for notations). By Proposition \ref{sec:harish-chandra-trans},
  \[
    \chi_B^P\hc^G_B(A) = \chi_B^P\hc^P_B\hc^G_P(A)
  \]
  contains $\hc_P^G(A)$ as a direct summand, and so $\hc_P^G(A)$ is supported on $\tilde{\cO}_1$. It follows that the canonical morphism
  \[
    \res^G_P(A) \to \hc^G_P(A)
  \]
  is an isomorphism. The proposition now follows from the base change isomorphism. 
\end{proof}
We will also use the independence of the functor $\res^G_P:P_G(\mathcal{N}_G)\to P_G(\mathcal{N}_L)$ of the choice of a parabolic $P$ with the given Levi $L$, which is proved in \cite[Theorem 7.3]{mirkovicCharacterSheavesReductive2004} in the setting of sheaves on the nilpotent cone $\mathcal{N}_\g$ of $\g = \operatorname{Lie} G$. Since, for a good prime $p$, $\mathcal{N}_\g$ can be $G$-equivariantly identified with $\mathcal{N}_G$ (see \cite[Chapter 6.20]{humphreysConjugacyClassesSemisimple2011} and references therein), the same is true for equivariant perverse sheaves on $\mathcal{N}_G$. In this case we write $\res^G_P =: \res^G_L$.

\begin{proof}[Proof of Proposition \ref{sec:exter-powers-spring}]
Since the statement of the Proposition only depends on the $W$-orbit of $\lambda$, we may assume that the stabilizer of $\lambda$ is some proper parabolic subgroup $W_Q \subset W$ corresponding to a proper parabolic $Q$ of $\GL_n$ with Levi $M$. We will now deduce the statement of the Proposition from the inductive assumption and the fact that \[\mathcal{N}_M = \prod\limits_{k}\mathcal{N}_{\GL_k},\]
  with $k < n$.

    By Corollary \ref{sec:generic-monodromy-1}, it is enough to show that 
  \(\Pi_\lambda\res_{Q}^G(\Phi_n^k\star \id_{E_m^{\lambda}}) \)
  is an isomorphism for all $m \geq 1$.
  This morphism is, by definition, a direct summand of the morphism
   \begin{equation}
     \label{eq:9}
  \operatorname{Ind}^G_{P}\operatorname{Res}^G_{P}(\Spgr_{\wedge^{k}})\star^0 E_m^{\lambda}\xrightarrow{\tilde{\Phi}_n(\Spgr_{\wedge^k})} \Spgr_n\star^0\wedge^{k}\Spgr_n\star^0 E_m^{\lambda}.
 \end{equation} 

  Choose a set of representatives $\Sigma$ of the double cosets $Q\backslash G/P$. For $x \in G(\kk)$ and a subgroup $H \subset G$, write ${}^xH = xHx^{-1}$ and $H^x = x^{-1}Hx$. We assume that $\Sigma$ is chosen so that $L$ and all $M^x$ with $x\in\Sigma$ have a common maximal torus.
  On the left-hand side of \eqref{eq:9}, by the Mackey formula \cite[Proposition 10.1.2]{marsCharacterSheaves1989} and Proposition \ref{sec:exter-powers-spring-6}, we get 
  \begin{align*}
   \Pi_\lambda \res^G_Q&\left(\ind_P^G\res^G_P(\Spgr_{\wedge^k})\star^0E_m^{\lambda}\right) = \\
    &= \bigoplus_{x \in \Sigma}\ind^M_{M\cap{}^xP}{}^x(\res_{L\cap Q^x}^L\res^G_P\Spgr_{\wedge^k}) \star^0\Pi_\lambda \res^G_QE_m^{\lambda} =\\
    &= \bigoplus_{x \in \Sigma}\ind^M_{M\cap{}^xL}{}^x(\res_{M^x\cap L}^G\Spgr_{\wedge^k}) \star^0\Pi_\lambda \res^G_QE_m^{\lambda} =\\
    &=  \bigoplus_{x \in \Sigma}\ind^M_{M\cap{}^xL}{}^x(\res_{M^x\cap L}^{M^x}\res^G_{M^x}\Spgr_{\wedge^k}) \star^0\Pi_\lambda \res^G_QE_m^{\lambda} = \\
    &= \bigoplus_{x \in \Sigma}\ind^M_{M\cap{}^xL}\res_{M\cap {}^xL}^{M}{}^x(\res^G_{M^x}\Spgr_{\wedge^k}) \star^0\Pi_\lambda \res^G_QE_m^{\lambda} = \\
    &= \bigoplus_{x \in \Sigma}\ind^M_{M\cap{}^xL}\res_{M {}^xL}^{M}\res^G_{M}\Spgr_{\wedge^k} \star^0\Pi_\lambda \res^G_QE_m^{\lambda}. 
  \end{align*}
Note that since $P^\lambda(\yy_Q)$ is a monoidal ideal containing $\Pi_\lambda \res^G_QE_m^\lambda$, we can omit $\Pi_\lambda$ on the first $\star^0$-factor from the notation.

Denote
  \begin{align*}
    \Spgr_\lambda := \Pi_\lambda\res_{Q}^G(\Spgr_n\star^0 E_m^{\lambda}) &= \res_{Q}^G(\Spgr_n)\star^0\Pi_\lambda  \res^G_QE_m^{\lambda}  \\
    &= \bigoplus_{x \in \Sigma}\ind^M_{M\cap{}^xL}(\delta_e^{M\cap {}^xL}) \star^0\Pi_\lambda  \res^G_QE_m^{\lambda}.
  \end{align*}
 Here, for any Levi $N$, $\delta^N_e$ is the rank 1 skyscraper sheaf supported at the unit of $N$. Above we used the fact that $\Spgr = \ind_P^G\delta^L_e$ and the Mackey formula.
 
   By Corollary \ref{sec:generic-monodromy-1}, on the right-hand side of \eqref{eq:9} we have

  \[
  \Pi_\lambda\res_{Q}^G\left(\Spgr_n\star^0\wedge^{k}\Spgr_n\star^0 E_m^{\lambda}\right) = \Spgr_\lambda\star^0\wedge^{k}\left(\Spgr_\lambda\right).
\]
Here we used the fact that $E^{\lambda}_m$ is an idempotent for $\star^0$, which follows from the corresponding fact for $G = T$.

For any $x \in \Sigma$, consider the morphism
\[
\begin{tikzcd}
  \displaystyle{\bigoplus\limits_{x\in\Sigma}}\ind^M_{M\cap{}^xP}\res_{{}^xL\cap Q}^{M}\res^G_{Q}\Spgr_{\wedge^k} \star^0\Pi_\lambda \res^G_QE_m^{\lambda} \arrow[d, "\tilde{\Phi}_{n,\lambda}^k"] \\
  \mathcal{S}_\lambda \star^0 \wedge^k\mathcal{S}_\lambda\star^0\Pi_\lambda \res^G_QE_m^{\lambda}
\end{tikzcd}
\]
By the inductive assumption that $\Phi_{n'}^k$  is an isomorphism for all $n' < n$, the restriction
\[
  \Phi_{n,\lambda}^k= \Pi_\lambda\res_{Q}^G(\Phi_n^k\star \id_{E_m^{\lambda}}):\res^G_Q\mathcal{S}_{\wedge^{k+1}}\star^0\Pi_\lambda \res^G_QE_m^{\lambda} \to \wedge^{k+1}\mathcal{S}_\lambda
\]
of \(\tilde{\Phi}_{n,\lambda}^k\) to the corresponding direct summands is also an isomorphism.
\end{proof}
Let $Z(H)$ stand for the center of an algebraic group $H$. For $c \in \mathcal{C}(Z(\GL_n))(\Qlbar)$, let $e_c = \mathcal{E}_1^{c}[1]$ (in the notation of Section \ref{sec:mellin-transform} applied to $G = \mathbb{G}_m$). We will consider $e_c$ as an object in $P_{\GL_n}(\GL_n)$ via the closed embedding $Z(\GL_n) \to \GL_n$. As a direct consequence of the above and Corollary \ref{sec:pro-unit-character} we get
\begin{corollary}
  \label{sec:exter-powers-spring-3}
  Under the assumptions of Proposition \ref{sec:exter-powers-spring}, perverse co\-ho\-mo\-lo\-gy sheaves of the cone of the map \[\operatorname{Id}_{e_c}\star\,\Phi_n^k:e_c\star\Spgr_{\wedge^{k+1}}\to e_c\star\wedge^{k+1}\Spgr_n\] are character sheaves with a central character fixed by $W$.
\end{corollary}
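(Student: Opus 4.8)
The plan is to combine Proposition \ref{sec:exter-powers-spring} with Corollary \ref{sec:pro-unit-character} by first reducing the claim about the cone of $\operatorname{Id}_{e_c}\star\Phi_n^k$ to a statement about convolution with the pro-unital objects $E_m^{\lambda}$. Since $e_c$ is a skyscraper on the center $Z(\GL_n) \subset \GL_n$, convolution with $e_c$ commutes with $\star^0$ and with convolution by any $E_m^\lambda$ (the center is a torus and the relevant compatibilities come from the $G=\mathbb{G}_m$ case). Therefore the cone of $\operatorname{Id}_{e_c}\star\Phi_n^k$ is $e_c$ convolved with the cone $C$ of $\Phi_n^k$, and it suffices to control $C\star^0 E_m^\lambda$ for $\lambda\notin S$, where $S$ is the set of $W$-fixed points in $\ct$.

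\textbf{Key steps.} First I would record that under the assumptions of Proposition \ref{sec:exter-powers-spring}, for every $\lambda\in\mathcal{C}(T)(\Qlbar)$ with $\operatorname{Stab}_W\lambda\neq W$, the morphism $\Phi_n^k\star\operatorname{id}_{E_m^\lambda}$ is an isomorphism for all $m$; this is precisely the content of Proposition \ref{sec:exter-powers-spring}. Second, I would note that $\Phi_n^k$ is a morphism between the perverse objects $\Spgr_{\wedge^{k+1}}$ and $\wedge^{k+1}\Spgr_n$, so Corollary \ref{sec:pro-unit-character} applies verbatim with $\mathcal{A} = \Spgr_{\wedge^{k+1}}$, $\mathcal{B} = \wedge^{k+1}\Spgr_n$, $f = \Phi_n^k$: the hypothesis ``$f\star^0\operatorname{id}_{E_n^\lambda}$ is an isomorphism for all $n$ and $\lambda\notin S$'' is satisfied because every $\lambda\notin S$ has non-full $W$-stabilizer. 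Hence the cohomology sheaves of the cone $C_f$ of $\Phi_n^k$ are character sheaves with characters in $S$, i.e. with characters fixed by $W$. Third, I would pass from $C_f$ to $e_c\star C_f$: convolving a character sheaf whose character is $W$-fixed with the skyscraper $e_c$ on the center multiplies the character by the central contribution $c$, which does not change the property of being $W$-fixed (the $W$-action is trivial on the factor coming from the center, and on the torus part the monodromy is unchanged). Thus the perverse cohomology sheaves of $e_c\star C_f$, which equal $e_c\star(\text{perverse cohomology of }C_f)$ up to the identification $e_c\star^0(-)$ — here one uses that convolution with $e_c$ is $t$-exact, as $Z(\GL_n)$ is a torus and $e_c$ is a shifted rank-one local system pushed forward along a closed immersion — are again character sheaves with $W$-fixed central character.

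\textbf{Main obstacle.} The genuinely delicate point is the bookkeeping for convolution with $e_c$: one must check that $(-)\star e_c$ is perverse $t$-exact on $P_{\GL_n}(\GL_n)$ and that it sends a character sheaf with character $\mu\in S$ to a character sheaf with a character that is still $W$-fixed. The $t$-exactness reduces to the statement that $\E_1^c[1]$ is perverse on $\mathbb{G}_m$ and that pushforward along the closed immersion $Z(\GL_n)\hookrightarrow\GL_n$ is exact together with the fact that multiplicative convolution with a fixed skyscraper (supported at a single central element, which is regular semisimple-free — it lies in the center, so acts trivially) amounts to translation, hence is an equivalence. The statement about characters follows by applying $\hc_B^G$ or $\res_B^G$ and tracking monodromies: $\res_B^G(e_c\star\mathcal{F}) = \mathcal{L}_c\otimes\res_B^G(\mathcal{F})$ where $\mathcal{L}_c$ is the restriction to $T$ of the central character local system, and tensoring by $\mathcal{L}_c$ commutes with the $W$-action up to the simultaneous translation by $c$ on all weights, which is $W$-equivariant. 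Once these compatibilities are in place, the corollary is immediate.

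\begin{proof}[Proof of Corollary \ref{sec:exter-powers-spring-3}]
  Let $C$ be the cone of $\Phi_n^k:\Spgr_{\wedge^{k+1}}\to\wedge^{k+1}\Spgr_n$ in $D^b_{\GL_n}(\GL_n)$. Both source and target are perverse, so by Proposition \ref{sec:exter-powers-spring} and the assumption that every $\lambda\in\mathcal{C}(T)(\Qlbar)$ with $\lambda\notin S$ has $\operatorname{Stab}_W\lambda\neq W$, the morphism $\Phi_n^k\star^0\operatorname{id}_{E_m^\lambda}$ is an isomorphism for all $m$ and all $\lambda\notin S$. Corollary \ref{sec:pro-unit-character}, applied with $\mathcal{A}=\Spgr_{\wedge^{k+1}}, \mathcal{B}=\wedge^{k+1}\Spgr_n, f=\Phi_n^k$, then shows that ${}^p\mathcal{H}^i(C)$ are character sheaves with characters in $S$, i.e. with central character fixed by $W$.

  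It remains to pass to $e_c\star C$. Since $Z(\GL_n)$ is a torus and $e_c = \mathcal{E}_1^c[1]$ is a shifted rank-one local system, pushed forward along the closed immersion $Z(\GL_n)\hookrightarrow \GL_n$, convolution with $e_c$ is perverse $t$-exact; indeed multiplicative convolution with a skyscraper supported at a central element is a translation functor, hence an equivalence preserving the perverse $t$-structure. Therefore ${}^p\mathcal{H}^i(e_c\star C) \simeq e_c\star^0\,{}^p\mathcal{H}^i(C)$. Finally, applying $\res^G_B$ and using that $\res^G_B(e_c\star\mathcal{F}) = \mathcal{L}_c\otimes\res^G_B(\mathcal{F})$ with $\mathcal{L}_c$ the restriction of the central-character local system to $T$, tensoring by $\mathcal{L}_c$ shifts all weights simultaneously by the weight of $c$, an operation commuting with the $W$-action; hence a character sheaf with $W$-fixed central character is taken to a character sheaf with $W$-fixed central character. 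Thus the perverse cohomology sheaves of the cone of $\operatorname{Id}_{e_c}\star\Phi_n^k$ are character sheaves with central character fixed by $W$.
\end{proof}
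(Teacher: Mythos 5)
Your overall reduction is the one the paper intends: Proposition \ref{sec:exter-powers-spring} supplies exactly the hypothesis of Corollary \ref{sec:pro-unit-character} for $f=\Phi_n^k$ (since $\lambda\notin S$ is the same as $\operatorname{Stab}_W\lambda\neq W$), both $\Spgr_{\wedge^{k+1}}$ and $\wedge^{k+1}\Spgr_n$ are perverse, and the conclusion is then transported through $e_c\star(-)$. The first half of your argument is correct.

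The flaw is in your treatment of $e_c$. You assert that $e_c$ is "a skyscraper on the center" and that convolution with it is "a translation functor, hence an equivalence preserving the perverse $t$-structure." Neither is true: $e_c=\mathcal{E}_1^c[1]$ is a shifted rank-one local system on the whole one-dimensional torus $Z(\GL_n)\simeq\mathbb{G}_m$, not a punctual sheaf. Convolution with it is an idempotent projector rather than an equivalence ($e_c\star^0 e_c\simeq e_c$ and $e_c\star e_{c'}=0$ for $c'\neq c$), and it is not perverse $t$-exact on all of $D^b_G(G)$: already on the center one computes $e_c\star e_c\simeq e_c\oplus e_c[1]$, so ${}^p\mathcal{H}^{-1}(e_c\star e_c)\neq 0$. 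Hence the identification ${}^p\mathcal{H}^i(e_c\star C)\simeq e_c\star^0\,{}^p\mathcal{H}^i(C)$ does not follow from the reason you give. The step is nevertheless repairable in two ways: (i) the cone $C$ of $\Phi_n^k$ is supported on $\SL_n$ (products of unipotent elements lie in $\SL_n$), and the multiplication map $Z(\GL_n)\times\SL_n\to\GL_n$ is finite \'etale, so $e_c\star(-)=m_*(e_c\boxtimes -)$ \emph{is} $t$-exact on complexes supported on $\SL_n$, giving your identification for this particular $C$; or (ii) bypass $t$-exactness altogether by applying Proposition \ref{sec:pro-unit-character-1} \ref{item:1} directly to $\F=e_c\star C$, using associativity $(e_c\star C)\star E_m^{\lambda}\simeq e_c\star(C\star E_m^{\lambda})$ and the essential vanishing of $C\star E_m^{\lambda}$ for $\lambda\notin S$ already established. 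Either repair yields the stated conclusion; as written, however, the justification of the $e_c$-step is incorrect.
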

Note that it immediately follows that the morphisms $\operatorname{Id}_{e_c}\star\,\Phi_n^k$ are isomorphisms for $k < n - 1$: indeed, character sheaves on $\GL_n$ have full support, while it is easy to see that the source and target of $\operatorname{Id}_{e_c}\star\,\Phi_n^k$ have closed support, since a product of $k+1 < n$ matrices conjugate to a one in $U_P$ has an eigenvalue equal to 1 and lies in $\SL_n$. It now follows from \ref{item:3} that $\Phi_n^k$ is an isomorphism.

We now deduce the case $k \geq n - 1$ from the following
\begin{proposition}
  \label{sec:exter-powers-spring-1}
For any $c\in \mathcal{C}(Z(\mathrm{GL}_n))$, $e_c \star^0 \mathcal{S}_n \star^0 \mathcal{S}_{\wedge^{n-1}}$ does not contain character sheaves as direct summands.
\end{proposition}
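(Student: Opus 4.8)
The plan is to analyze the object $e_c \star^0 \Spgr_n \star^0 \Spgr_{\wedge^{n-1}}$ via parabolic restriction, exactly as in the proof of Proposition \ref{sec:exter-powers-spring}, and to show that its restriction to the maximal torus $T$ is \emph{not} a (shifted) local system, which by the standard criterion forces the absence of character-sheaf summands on $\GL_n$ (a character sheaf, having full support, restricts to a shifted local system on $T$, and by the t-exactness of $\res_B^G:P_G(G)\to P(T)$ such a summand would give a local system summand of $\res_B^G$ of the whole object). So the strategy reduces to an explicit computation of $\res_B^G\bigl(e_c\star^0\Spgr_n\star^0\Spgr_{\wedge^{n-1}}\bigr)$, or at least of enough of its structure to see that it fails to be locally constant on $T$.

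First I would use Proposition \ref{sec:exter-powers-spring-6}: since $\hc_*(\Spgr_n)\simeq \res_B^G(\Spgr_n)$ (both parabolic Springer sheaves $\Spgr_P$ are supported on $\mathcal N_G$ and have central-character-free Harish-Chandra transform, cf.\ the discussion around Proposition \ref{centralunit}), parabolic restriction is monoidal on these objects, so $\res_B^G\bigl(e_c\star^0\Spgr_n\star^0\Spgr_{\wedge^{n-1}}\bigr)\simeq \res_B^G(e_c)\star \res_B^G(\Spgr_n)\star \res_B^G(\Spgr_{\wedge^{n-1}})$ in $P(T)$ (up to perverse truncation, which for a torus is harmless by the Mellin transform formalism \ref{item:4}, \ref{item:5}). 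Here $\res_B^G(e_c)=\mathcal L_c$ is the rank-one local system on $T$ pulled back from $Z(\GL_n)=\mathbb G_m$ via the determinant. Next I would compute $\res_B^G(\Spgr_n)$ and $\res_B^G(\Spgr_{\wedge^{n-1}})$: by the Springer-theoretic description and the Mackey-type formula used above, $\res_B^G(\Spgr_V)$ for $V$ a representation of $W$ is a successive extension of (shifted) Kummer-type local systems on the various torus strata, but crucially it is \emph{not} a single local system on all of $T$ — it has nontrivial weight/stratification structure coming from the walls $\{t_i = t_j\}$. The key point is that the convolution $\res_B^G(\Spgr_n)\star\res_B^G(\Spgr_{\wedge^{n-1}})$ lands on $\det = $ (something of bounded degree), so after twisting by $\mathcal L_c$ the total object is supported on a proper closed subvariety of $T$ cut out by a determinantal condition — precisely the observation used right after Corollary \ref{sec:exter-powers-spring-3}, that a product of $n$ unipotent-$U_P$-conjugate matrices has an eigenvalue $1$. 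Thus $\res_B^G$ of our object is supported on a proper closed subset of $T$, hence cannot contain a shifted local system on $T$ as a direct summand, hence the original object contains no character sheaf.

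More carefully: I would argue that any character sheaf summand $\mathcal G$ of $e_c\star^0\Spgr_n\star^0\Spgr_{\wedge^{n-1}}$ would have $\res_B^G(\mathcal G)$ a nonzero shifted $S$-monodromic local system on $T$ (by Proposition \ref{sec:pro-unit-character-1} and the fact that character sheaves have full support); since $\res_B^G$ is exact and conservative on $P_G(G)$ (there are no cuspidal pairs for $\GL_n$), $\res_B^G(\mathcal G)$ would be a direct summand of $\res_B^G\bigl(e_c\star^0\Spgr_n\star^0\Spgr_{\wedge^{n-1}}\bigr)$. But the latter is supported, via the convolution-support estimate above, on the proper closed subset $\{t\in T : \prod t_i \in \text{(image of $n$ nilpotent-translated tori)}\}$, i.e.\ on a proper subtorus coset — in any case not all of $T$ — and a perverse sheaf supported on a proper closed subset of $T$ has no summand which is a shifted local system on $T$. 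This contradiction gives the result. I would also need to handle the $e_c$-twist and the precise indexing of central characters; by Corollary \ref{sec:exter-powers-spring-3} only the $W$-fixed central character is relevant, and for that character the support estimate is exactly the $\SL_n\cdot\{\text{eigenvalue }1\}$ condition, which is proper.

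The main obstacle I expect is making the support estimate for the convolution $\res_B^G(\Spgr_n)\star \res_B^G(\Spgr_{\wedge^{n-1}})$ on $T$ fully rigorous: one must identify $\res_B^G(\Spgr_{\wedge^{n-1}})$ explicitly enough (it is the restriction of the Springer sheaf for the reflection-minus-trivial representation, roughly a sum of Kummer sheaves on codimension-one strata $\{t_i = t_j\}$ with appropriate shifts) and then control the support of its $\star$-convolution with $\res_B^G(\Spgr_n)$, keeping track of the fact that the $\star$-convolution on $T$ is addition of supports in the multiplicative group. The cleanest route is probably to avoid computing these restrictions individually and instead argue directly on $\GL_n$: the source and target of the relevant maps have \emph{closed} (non-full) support in $\GL_n$ — a product of $n$ matrices each conjugate into $U_P$ lies in $\{g : \det g = 1,\ 1 \in \mathrm{Spec}(g)\}$, a proper closed subvariety — so any perverse cohomology sheaf of $e_c\star^0\Spgr_n\star^0\Spgr_{\wedge^{n-1}}$ has non-full support, while character sheaves on $\GL_n$ have full support; hence no character-sheaf summands. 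This is in fact the argument already sketched in the paragraph following Corollary \ref{sec:exter-powers-spring-3}, and I would simply spell it out: it only requires that $\Spgr_{\wedge^{n-1}}$ (as a summand of $\Spgr_n^{\star 0(n-1)}$, or via its description through parabolic induction from $\GL_{n-1}\boxtimes\GL_1$) is supported on matrices with eigenvalue $1$, together with the elementary computation of eigenvalues of a product of $U_P$-conjugates.
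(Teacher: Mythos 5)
Your argument has a genuine gap, and it sits exactly at the step you call the ``cleanest route'': the claim that a product of $n$ matrices each conjugate into $U_{P_1}$ lies in $\{g:\det g=1,\ 1\in\mathrm{Spec}(g)\}$ is false. A matrix conjugate into $U_{P_1}$ is $1+A$ with $\operatorname{rk}A\le 1$, so a product of $m$ such factors is $1+B$ with $\operatorname{rk}B\le m$; this forces the eigenvalue $1$ only when $m<n$. That is precisely the content of the remark following Corollary \ref{sec:exter-powers-spring-3}, which disposes of $k<n-1$ --- and precisely why the cases $k\ge n-1$ are split off into Proposition \ref{sec:exter-powers-spring-1}. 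For $n$ factors the support estimate collapses: already in $\SL_2$ the product of two unipotents can be any element (e.g.\ $\left(\begin{smallmatrix}2&1\\1&1\end{smallmatrix}\right)$, with no eigenvalue $1$), and in general Proposition \ref{sec:exter-powers-spring-2} shows that every regular semisimple $D\in\SL_n$ with no eigenvalue $1$ lies in the image of the convolution map $m:\mathfrak{C}_n\to\GL_n$, with fiber $T/Z(\GL_n)$. So $\mathcal{S}_n\star^0\cc_{\mathcal N}[2\dim U]$ has support dense in $\SL_n$, and after the $e_c$-twist (which is essential to the statement, since $Z(\GL_n)\cdot\SL_n=\GL_n$) the object has \emph{full} support in $\GL_n$. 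The same defect sinks your first route through $\res^G_B$: the restriction to $T$ is not supported on a proper closed subset once the central twist is in place, so no contradiction with the full support of character sheaves arises.

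Because the support argument gives nothing here, the paper has to work much harder: (i) the fiber computation of Proposition \ref{sec:exter-powers-spring-2} shows that any full-support summand of $e_c\star^0\mathcal{S}_n\star^0\cc_{\mathcal N}[2\dim U]$ must have generic stalk of dimension $1$, leaving only the shifted constant sheaf and the Steinberg character sheaf as candidates; (ii) the constant sheaf is excluded by the hypercohomology vanishing $\mathbb H^0(\mathcal{S}_n\star\cc_{\mathcal N}[2\dim U])=0$, which follows from $\operatorname{H}^\bullet(\mathfrak{C}_n)\simeq\operatorname{H}^\bullet(\mathbb P^{n-1})$ together with right t-exactness of $\star$ and $\mathbb H^0$; (iii) the Steinberg sheaf is excluded by passing through the Harish-Chandra transform to the monodromic Hecke category and using $\hc'(\wedge^{n+1}\mathcal{S}_n)\star\T_{w_0}\simeq\wedge^{n+1}V\otimes\T_{w_0}=0$, combined with the fact that the Steinberg sheaf is the unique irreducible unipotent character sheaf not annihilated by $\hc'(-)\star\T_{w_0}$. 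None of these three ingredients appears in your proposal, and the statement cannot be reached without some substitute for them.
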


Note that $\mathcal{S}_n \simeq \delta_e \oplus \mathcal{C}_n$, where $\mathcal{C}_n$ is the intersection cohomology complex of the minimal unipotent orbit. It follows that
\[
  \wedge^k \mathcal{S}_n \simeq \wedge^k\mathcal{C}_n \oplus \wedge^{k-1}\mathcal{C}_n.
\]

It is easy to see that the functor $e_c \star^0 -$ is monoidal and we have $\wedge^{k}(e_c \star^0\mathcal{S}_n) \simeq e_c \star^0 \wedge^{k} \mathcal{S}_n$.
From Corollary \ref{sec:exter-powers-spring-3} and the discussion immediately below it,  it follows that
\[
  e_c\star^0\wedge^{n+1}\mathcal{S}_n \simeq e_c\star^0(\wedge^{n+1}\mathcal{C}_n \oplus \wedge^{n}\mathcal{C}_n),
\]
is a character sheaf, and so is $e_c \star^0\wedge^{n}\mathcal{C}_n$. On the other hand, we have that the object $e_c\star^0\wedge^{n}\mathcal{C}_n \simeq e_c \star \wedge^{n}\mathcal{C}_n$ is a summand of $e_c\star^0\wedge^{n}\mathcal{S}_n$, which is in turn contained as a summand in
\[
    e_c\star^0\mathcal{S}_n\star^0\wedge^{n-1}\mathcal{S}_n\simeq e_c\star^0\mathcal{S}_n\star^0\mathcal{S}_{\wedge^{n-1}}.
\] 
\sloppy Thus Proposition \ref{sec:exter-powers-spring-1} together with property \ref{item:3} imply that ${\wedge^n\mathcal{C}_n \simeq 0}$, so that $\wedge^{n+1}\mathcal{S}_n \simeq 0$ and $\wedge^{n}\mathcal{S}_n \simeq \wedge^{n-1}\mathcal{C}_n$, which finishes the inductive step.
\begin{proof}[Proof of Proposition \ref{sec:exter-powers-spring-1}]
We will need the following elementary
\begin{lemma}
  \label{sec:exter-powers-spring-2}
  Let $D$ be a regular semisimple matrix in $\mathrm{SL}_n(\kk)$ with no eigenvalues equal to 1. Let $P \subset \mathrm{SL}_n$ be the parabolic subgroup fixing a line $l$. If a pair of matrices $M, N,$ with $M$ unipotent, $N \in U_P(\kk)$, and $NM = D$, exists, then $l$ contains a cyclic vector for $D$. If it exists, such a pair is unique for a given $l$. 
\end{lemma}
\begin{proof}
  Note that if $l$ does not contain a cyclic vector, then for any $N' \in U_P$, $N'D$ preserves an $(n-1)$-dimensional subspace and acts as a constant not equal to 1 on the quotient by this subspace, so can't be unipotent.

  Assume that $l$ contains a cyclic vector and that such a pair of matrices $M, N$ exists. Writing $N = \id + A$ and $M = \id + B$, with $A$ and $B$ nilpotent, note that \[n = \operatorname{rk}(D-\id)=\operatorname{rk}(AB + A + B) \leq \operatorname{rk}A+\operatorname{rk}B = 1 + \operatorname{rk}B.\] It follows that $M$ must be a regular unipotent element, such that $l$ is not in the image of $B$. Direct computation now shows that $N'M$ with $N'\in U_P(\kk)$ is unipotent if and only if $N' = \operatorname{id}$. If $M_1, N_1$ is another such pair satisfying $N_1M_1 = D,$ we have that $M_1 = (N_1^{-1}N)M$ is unipotent, so that $N_1 = N$ and $M_1 = M$, as desired.    
\end{proof}

  Since $e_c\star^0\mathcal{S}_n \star^0 \mathcal{S}_{\wedge^{n-1}}$ is the direct sum of perverse sheaves with proper closed support and
  \[
    e_c\star^0\mathcal{S}_n\star^0\cc_{\mathcal{N}}[2\dim U],
  \]
  any direct summand of the former sheaf with full support is also a direct summand of the latter.
  
  Let $\mathfrak{C}_n = \{(M,N,l): M \in \mathcal{N}, (N,l)\in\mathcal{N}_P\}$ with $m: \mathfrak{C}_n \to \GL_n$ the multiplication map $m(M,N,l) = NM$. Write
  \[
    \mathcal{S}_n\star^0\cc_{\mathcal{N}}[2\dim U] \simeq {}^p\mathcal{H}^0(m_*\cc_{\mathfrak{C}_n}[n^2+n-2]).
  \]
  Lemma \ref{sec:exter-powers-spring-2} implies that the fiber of $m$ over the generic regular semisimple element in the intersection of the fixed torus $T$ with the image of $m$ is identified with $T_{adj}= T/Z(\GL_n)$. It follows that, if $e_c\star^0\mathcal{S}_n\star^0\cc_{\mathcal{N}}[2\dim U]$ contains a character sheaf as a direct summand, it must be a character sheaf with a generic stalk of dimension $1$. We are left to consider two cases, when restriction of this summand to $\SL_n$ is a shifted constant sheaf, or a character sheaf corresponding to the Steinberg representation. Since the functors of $\ast$-convolution and (non-equivariant) hypercohomology $\mathbb{H}^0$ are both right $t$-exact, we have
  \[
    \mathbb{H}^0(\mathcal{S}_n\star^0\cc_{\mathcal{N}}[2\dim U]) \simeq  \mathbb{H}^0(\mathcal{S}_n\star \cc_{\mathcal{N}}[2\dim U]). 
  \]
  Since $\operatorname{H}^{\bullet}(\mathfrak{C}_n) \simeq \operatorname{H}^{\bullet}(\mathbb{P}^{n-1})$, we get that $\mathbb{H}^0(\mathcal{S}_n\star \cc_{\mathcal{N}}[2\dim U]) = 0$, and so the shifted constant sheaf $\cc_{\SL_n}[n^2-1]$ can not be a summand of $\mathcal{S}_n\star^0\cc_{\mathcal{N}}[2\dim U]$.

   Finally, we show that the Steinberg character sheaf can not be a direct summand of $\mathcal{S}_n\star^0\cc_{\mathcal{N}}[2\dim U]$, corresponding to $\wedge^{n+1}\mathcal{S}_n$. We recall some notations from Section \ref{sec:finite-monodr-hecke}. There we defined the completed monodromic Hecke category $\mathcal{H}(\SL_n)$, the free-monodromic tilting object $\T_{w_0}$, standard and costandard objects $\hat{\Delta}_w, \hat{\nabla}_w$ in $\mathcal{H}(\SL_n)$ from \cite{bezrukavnikovKoszulDualityKacMoody2013}. We denote the composition
   \[
     D_G^b(G) \xrightarrow{\hc} D_G^b(\yy) \xrightarrow{\operatorname{For}_T} D_G^b(G/U \times G/U) \xrightarrow{~} D^b(U\backslash G/U) \to \mathcal{H}(\SL_n)
   \]
   by $\hc'$.

   Denote by $\pi:G/U \to G/B$ the canonical projection. Let $L_w$ for $w\in W$ stand for the pullback $\pi^*\IC_w$, where $\IC_w$ is the intersection cohomology sheaf of the Bruhat $U$-orbit labeled by $w$.  
   We collect some facts about the completed category and the functor $\hc$ in the following
   \begin{proposition}
     \label{sec:exter-powers-spring-4}
     \begin{enumerate}[label=\alph*),ref=(\alph*)]
     \item\label{item:15} The functor $\hc'$ is monoidal.
     \item\label{item:16} The functor $\DD_{w_0}\star\hc'$ is $t$-exact with respect to the perverse $t$-structure, when restricted to the subcategory of unipotent character sheaves.
     \item\label{item:18} We have $L_w \star \T_{w_0} = 0$ unless $w=1$.
     \item\label{item:19} The functor $(-) \star\T_{w_0}$ of convolution with $\T_{w_0}$ is $t$-exact with respect to the perverse $t$-structure.
     \end{enumerate}
   \end{proposition}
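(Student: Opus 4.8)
The plan is to prove the four assertions in the order (a), (d), (c), (b), since the proof of (c) will use (d) and that of (b) will use (a). \textbf{Part (a)} is formal: the functor $\hc'$ is built from the Harish-Chandra transform $\hc=\hc_B^G$ by forgetting the $T$- and $G$-equivariance and then completing. The transform $\hc$ is monoidal by §\ref{sec:harish-chandra-trans-1}; the forgetful steps are monoidal because all the relevant convolutions are defined by $*$-pushforward along a multiplication map and hence are insensitive to forgetting equivariance; and the completion functor $\mathcal{H}'(G)\to\mathcal{H}(G)=\hat D$ is monoidal by \cite{bezrukavnikovKoszulDualityKacMoody2014}. Since the Harish-Chandra transform of a unipotently monodromic complex is again unipotently monodromic, the composite lands in $\mathcal{H}'(G)$ and is monoidal.

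\textbf{Part (d).} Using $\mathcal{H}(G)\simeq\Ho^b(\T)$, the endofunctor $(-)\star\T_{w_0}$ is computed termwise on complexes of tilting sheaves, so it suffices to show that $\T_w\star\T_{w_0}$ is a tilting sheaf placed in cohomological degree $0$. Writing $\T_w$ as a direct summand of a Bott--Samelson object $\T_{s_{i_1}}\star\cdots\star\T_{s_{i_k}}$ and resolving each $\T_s$ by the triangles $\DD_s\to\T_s\to\dd$ and $\dd\to\T_s\to\NN_s$, Proposition~\ref{braid_relations}\ref{item:11},\ref{item:14} reduces the claim to the statement that an iterated extension of copies of $\T_{w_0}$ in $\mathcal{H}(G)$ is a direct sum of copies of $\T_{w_0}$ concentrated in degree $0$; this follows from $\Hom_{\mathcal{H}(G)}(\T_{w_0},\T_{w_0}[i])=0$ for $i\ne 0$, a property of the big pro-unipotent tilting recorded in \cite{bezrukavnikovKoszulDualityKacMoody2014}, \cite{bezrukavnikovTopologicalApproachSoergel2018}.

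\textbf{Part (c).} I would induct on $\ell(w)$, the case $w=1$ being vacuous. For a simple reflection $w=s$, pull back along $\pi$ the triangle $j_{s!}\cc_{\mathcal{O}_s}[1]\to\IC_s\to i_{1*}\cc_e[1]\xrightarrow{+1}$ on $U\backslash G/B$, where $\mathcal{O}_s$ is the open cell of $\overline{\mathcal{O}_s}\cong\mathbb{P}^1$ and $\mathcal{O}_1=\{e\}$, and pass to $\hat D$: the outer terms are, up to shift and monodromy rank, the standard object $\DD_s$ and the monodromic unit $\dd$, so Proposition~\ref{braid_relations}\ref{item:11},\ref{item:14} gives that both convolve with $\T_{w_0}$ to $\T_{w_0}$. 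Hence $L_s\star\T_{w_0}$ is the cone of an endomorphism of $\T_{w_0}$; by (d) it is perverse, so this endomorphism is nonzero (otherwise the cone would contain the non-perverse $\T_{w_0}[1]$) and therefore invertible, as $\operatorname{End}(\T_{w_0})$ is local, giving $L_s\star\T_{w_0}=0$. For general $w\ne 1$ pick a reduced expression $w=s_{i_1}\cdots s_{i_k}$ with $k\ge 1$; by the decomposition theorem for the Bott--Samelson resolution $L_w$ is a direct summand of $L_{s_{i_1}}\star\cdots\star L_{s_{i_k}}$, and by associativity this convolution vanishes after $(-)\star\T_{w_0}$ because its last factor $L_{s_{i_k}}$ does, so $L_w\star\T_{w_0}=0$.

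\textbf{Part (b), and the expected main obstacle.} The unipotent character sheaves on $\GL_n$ are the $\IC_w$, $w\in W$, so it suffices to show $\DD_{w_0}\star\hc'(\IC_w)$ is perverse. I would use the $t$-exactness of $\res^G_B\colon D^b_G(G)\to D^b(T)$ (\cite{bezrukavnikovParabolicRestrictionPerverse2021}) together with the expression of $\hc_*$ through the filtration of $\yy_B$ by $G$-orbits from the proof of Proposition~\ref{sec:generic-monodromy}: the closed orbit contributes $\iota_{B*}\res^G_B(\IC_w)$, which is perverse, while each non-closed orbit contributes a term twisted by a standard object $\DD_v$ with $v\ne 1$, and convolving with $\DD_{w_0}$ and simplifying $\DD_{w_0}\star\DD_v$ by means of Proposition~\ref{braid_relations} removes these terms from perverse degree $0$; alternatively, one identifies $\hc'$ on unipotent character sheaves with the monodromic Soergel functor and quotes the $t$-exactness of $\DD_{w_0}\star(-)$ composed with it from \cite{hhh}. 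The delicate point throughout is part (d) and the base case $w=s$ of part (c): both come down to controlling $\T_s\star\T_{w_0}$ and $L_s\star\T_{w_0}$, which forces one to invoke the fine structure of the big pro-unipotent tilting $\T_{w_0}$ --- the vanishing of its higher self-extensions and the locality of $\operatorname{End}(\T_{w_0})$ --- from \cite{bezrukavnikovKoszulDualityKacMoody2014}, \cite{bezrukavnikovTopologicalApproachSoergel2018}; granting those inputs, part (a) is purely formal, part (b) is a standard orbit-filtration bookkeeping, and the inductive step in (c) is a routine Bott--Samelson argument.
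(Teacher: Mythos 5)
Your part (a) matches the paper's (which simply notes monoidality of the forgetful functor), but the core of your argument has two genuine gaps. First, in (d) the reduction ``it suffices to show that $\T_w\star\T_{w_0}$ is a tilting sheaf in degree $0$'' does not yield perverse $t$-exactness: a perverse object corresponds under $\mathcal{H}(G)\simeq\Ho^b(\T)$ to a complex of tiltings spread over several cohomological degrees, and applying $(-)\star\T_{w_0}$ termwise only places the result in the perverse amplitude of that complex, not in degree $0$. Second, and more seriously, your base case of (c) rests on ``this endomorphism is nonzero and therefore invertible, as $\operatorname{End}(\T_{w_0})$ is local'' --- a non sequitur, since a local ring has many nonzero non-units (here $\operatorname{End}(\T_{w_0})$ is a large complete local ring), and the cone of multiplication by an injective non-unit is a nonzero perverse quotient of $\T_{w_0}$. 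Perversity of $L_s\star\T_{w_0}$ therefore does not force it to vanish, so your induction never gets started; and since your (d) leans on the unproved reduction and your (c) leans on (d), the whole chain collapses.

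The paper's logic runs in the opposite direction and is essentially citation: (c) is exactly \cite{bezrukavnikovKoszulDualityKacMoody2014}, Lemma 4.4.6 together with Lemma 4.4.11(3) (a structural fact about the big free-monodromic tilting, proved there using its $\DD$- and $\NN$-filtrations, not recoverable from Proposition \ref{braid_relations} alone), and (d) is then deduced from (c) by d\'evissage on Jordan--H\"older constituents; alternatively one gets (d) directly from the standard/costandard filtrations of $\T_{w_0}$, which give right and left exactness separately. Finally, (b) is not ``standard orbit-filtration bookkeeping'': it is the $t$-exactness of the composition of the Harish-Chandra/geometric Jacquet functor with $\DD_{w_0}\star(-)$, a nontrivial theorem which the paper imports from \cite{bezrukavnikovCharacterDmodulesDrinfeld2012} (Corollary 3.4) and \cite{chenFormulaGeometricJacquet2017} (Corollary 7.9); your sketch of ``removing the non-closed orbit terms from perverse degree $0$'' by simplifying $\DD_{w_0}\star\DD_v$ is not an argument, since the lengths need not add and the contributions of the open orbits are not standard objects on the nose. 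If you want a self-contained proof, you should either reproduce the arguments of those references or cite them as the paper does.
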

   \begin{proof}
     \ref{item:15} follows from the monoidality of $\operatorname{For}_T$. \ref{item:16} is proved in \cite[Corollary 3.4]{bezrukavnikovCharacterDmodulesDrinfeld2012} in the setting of $D$-modules and in \cite[Corollary 7.9]{chenFormulaGeometricJacquet2017} in the $\ell$-adic setting. \ref{item:18} is \cite[Lemma 4.4.6, Lemma 4.4.11 (3)]{bezrukavnikovKoszulDualityKacMoody2013}. \ref{item:19} is a direct consequence of \ref{item:18}. 
   \end{proof}
   It is shown in \cite{hhh}, Corollary 6.2.7, that $\hc'(\mathcal{S}_n)\star \hat{\Delta}_e$ has a filtration by the objects of the form $\mathfrak{J}_w \vcentcolon= \hat{\nabla}_w\star\hat{\nabla}_{w^{-1}}$ for $w \in W^J$. It follows from Proposition \ref{braid_relations} \ref{item:14} that
   \[
     \hc'(\mathcal{S}_n)\star\T_{w_0} \simeq {\mathbf{V}} \otimes_{\Qlbar} \T_{w_0}, 
   \]
   where $\bf{V}$ is an $n$-dimensional $\Qlbar$-vector space that can be identified with $\KK^n$ using the isomorphism $\mathfrak{J}_w \star \T_{w_0} \simeq \T_{w_0}$. More generally,
   \[
     \hc'(\mathcal{S}_n^{\star k})\star\T_{w_0} \simeq \mathbf{V}^{\otimes k} \otimes_{\Qlbar} \mathcal{S}_n. 
   \]
   For any $\F \in D_G^b(G)$, by Proposition \ref{sec:exter-powers-spring-4}, we have
   \[
     \pH^0(\hc'(\F)\star\T_{w_0})\simeq \hc'(\pH^0(\F))\star \T_{w_0}.
   \]
   It follows that
   \[
     \hc'(\mathcal{S}_n^{\star^0 k})\simeq \mathbf{V}^{\otimes k} \otimes_{\Qlbar} \mathcal{S}_n,
   \]
   with $S_k$-action on the left-hand side compatible with permutation action on $\mathbf{V}^{\otimes k}$. 

   It follows that \[\hc'(\wedge^{n+1}\mathcal{S}_n)\star\T_{w_0}\simeq \wedge^{n+1}\mathbf{V}\otimes_{\Qlbar}\T_{w_0} = 0.\]  Note that for any irreducible unipotent character sheaf $L\neq L_{St}$, we have $\hc'(L)\star\T_{w_0} = 0$ by \ref{sec:exter-powers-spring-4} \ref{item:18} and  \cite[Proposition 2.7]{lusztigTruncatedConvolutionCharacter2014}. However, the category of unipotent character sheaves is not sent to 0 by the functor $\hc'(-)\star \T_{w_0}$, and so we get that $\hc'(L_{St})\star \T_{w_0} \neq 0$. This implies that the Steinberg character sheaf $L_{St}$ can not be a direct summand of $\hc'(\wedge^{n+1}\mathcal{S}_n)$. This concludes the proof. 
 \end{proof}
 \subsection{The $\RepG$-action.}
 \label{sec:repg+-repg-action}
Let $\mathcal{C}$ be a $\KK$-linear, symmetric monoidal idempotent-complete category. Then, for any object $X \in \mathcal{C}$ and positive integer $k$, we have the $S_k$-action on the object $X^k$, and, for any partition $\lambda$, Schur functors $S^{\lambda}X$ are defined as images of the corresponding projectors. The following Lemma will allow to extend this assignment to a functor $\RepG^+ \to \mathcal{C}$.

\begin{lemma}
  \label{sec:repgk+-repgk-action-1}
  Let $X \in Ob(\mathcal{C})$ be such that $\wedge^{n+1}X = 0$. Then
\begin{enumerate}[label=\alph*),ref=(\alph*)]
 \item\label{item:9} there is a monoidal functor
   \[
     \RepG^+ \to \mathcal{C}
   \]
   sending the standard $n$-dimensional representation of $\bfG$ to $X$.
 \item\label{item:10} Assume that the category $\mathcal{C}$ acts on a $\KK$-linear idempotent-complete category $\mathcal{M}$ so that $\wedge^{n}X$ acts as an invertible functor. Then the action of $\RepG^+$ on $\mathcal{M}$ given by \ref{item:9} extends to the $\RepG$-action.  
\end{enumerate}
\end{lemma}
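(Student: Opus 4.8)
For part \ref{item:9}, the plan is to use the classical description of $\RepG^+$ as a universal symmetric monoidal category. Recall that $\RepG^+$ (polynomial representations of $\bfGL_n$) is equivalent, via Schur--Weyl duality, to the Karoubi envelope of the category whose objects are symbols $[k]$, $k\geq 0$, with $\Hom([k],[l])$ spanned over $\KK$ by the $\KK S_k$-bimodule structure when $k=l$ and zero otherwise — more precisely, it is the quotient of the free symmetric monoidal $\KK$-linear Karoubian category on one generator $P$ by the single relation that the antisymmetrizing idempotent $e_{\wedge^{n+1}}\in \KK S_{n+1}$ acting on $P^{\otimes n+1}$ is zero. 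Equivalently, one uses that $\RepG^+$ is generated under tensor, direct sums, summands and the symmetric braiding by the standard representation $V_{\mathrm{std}}$, subject to $\wedge^{n+1}V_{\mathrm{std}}=0$, and that the only relations among morphisms between tensor powers of $V_{\mathrm{std}}$ are consequences of this. Given such a presentation, the assignment $V_{\mathrm{std}}\mapsto X$ together with the symmetric braiding on $\mathcal{C}$ and the hypothesis $\wedge^{n+1}X=0$ determines a symmetric monoidal functor on the free category that kills the defining relation, hence descends to $\RepG^+\to\mathcal{C}$; idempotent-completeness of $\mathcal{C}$ is what allows the Schur functors $S^\lambda X$ (images of Young symmetrizers acting on $X^{\otimes|\lambda|}$) to exist as objects, so the functor is well-defined on all of $\RepG^+$. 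I would cite a standard reference for the presentation (e.g. the description of $\RepG^+$ via the ``polynomial functor'' / Schur algebra formalism, or Deligne's universal property of $\Rep\bfGL_n$) rather than reprove it.

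For part \ref{item:10}, the idea is that passing from $\RepG^+$ to $\RepG$ amounts to formally inverting the determinant representation $\det = \wedge^n V_{\mathrm{std}}$: one has $\RepG \simeq \RepG^+[\det^{-1}]$, the localization of the symmetric monoidal category $\RepG^+$ at the invertible object $\det$ (concretely, $\RepG$ is the colimit of $\RepG^+ \xrightarrow{-\otimes\det}\RepG^+\xrightarrow{-\otimes\det}\cdots$, or the category of $\mathbf{Z}$-graded objects obtained by adjoining a $\otimes$-inverse to $\det$). Under the functor from part \ref{item:9}, $\det$ goes to $\wedge^n X$, which by hypothesis acts on $\mathcal{M}$ as an invertible (auto)functor. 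Therefore the $\RepG^+$-action on $\mathcal{M}$ sends the localizing object to an invertible operator, so by the universal property of localization it factors through $\RepG^+[\det^{-1}]=\RepG$, giving the desired extension. Idempotent-completeness of $\mathcal{M}$ again ensures that the Schur functors $S^\lambda X$, now indexed by arbitrary (not necessarily polynomial) partitions, act as honest endofunctors.

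The main obstacle I expect is making the universal-property statements precise enough to be rigorous while keeping the proof short: one must be careful that ``inverting $\det$'' is compatible with the Karoubian structure (the image of an idempotent in the localized category is again split, which requires $\mathcal{C}$, resp.\ $\mathcal{M}$, idempotent-complete, and this is exactly why those hypotheses appear), and one must check that $-\otimes\det$ really is an equivalence onto its essential image on $\RepG^+$ so that the sequential colimit computes $\RepG$ correctly. A secondary technical point is that the symmetric braiding on $\mathcal{C}$ coming from $\star^0$ must genuinely be symmetric (not merely braided) for the $S_k$-actions on $X^{\otimes k}$ to be group actions rather than braid-group actions — but this is supplied by Proposition \ref{sec:twist-cor} in the application, and in the statement of the Lemma it is a standing hypothesis on $\mathcal{C}$. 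I would handle the localization bookkeeping by citing the standard construction of $\RepG$ from $\RepG^+$ and the universal property of $\otimes$-inversion of an invertible object, reducing the proof of \ref{item:10} to a one-line application.
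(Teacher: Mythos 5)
Your proposal is correct and takes essentially the same route as the paper: part (a) rests on the presentation of $\RepG^+$ as the Karoubi envelope of the free symmetric monoidal $\KK$-linear category on one generator modulo the vanishing of $\wedge^{n+1}$, justified by the second fundamental theorem of invariant theory (the paper phrases this via the subcategory of Deligne's $\Rep(\bfGL(t=n),\KK)$ on objects $(U,\varnothing)$ and cites Procesi). For part (b) the paper likewise inverts the determinant, just concretely --- setting $\phi_{V^{\vee}} = \phi_{\wedge^{n-1}V}\circ\phi_{\det}^{-1}$ --- rather than invoking the universal property of localizing $\RepG^{+}$ at the invertible object $\det$.
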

\begin{proof}
  We will use the framework of \cite[Section 10]{deligneCategorieRepresentationsGroupe2007}. Let ${\Rep^+(\bfGL(t=n), \KK)}$ stand for the full idempotent-complete subcategory of the Deligne category $\Rep(\bfGL(t = n), \KK)$ with objects $(U, \varnothing)$ in the notations of ibid.\@ Here notation $t=n$ means that we specialize the indeterminate $t$ in the definition of $\Rep(\bfGL(t), \KK)$ to $n$. The category ${\Rep^+(\bfGL(t=n), \KK)}$ is just the idempotent completion of a category with objects $[m] \in \mathbb{N}$ and morphisms given by 1-dimensional unoriented flat bordisms.  There is a functor ${\Rep^+(\bfGL(t=n), \KK)} \to \mathcal{C}$, sending the generating object $[1]$ to $X$. The fact that $\wedge^{n+1}X = 0$ implies, by the second fundamental theorem of invariant theory of the general linear group (see, for example, \cite[Theorem 6.1]{procesiLieGroupsApproach2007}, that this functor factors through $\RepG^+$. This proves \ref{item:9}.

  We now prove \ref{item:10}. For $\mathbf{W} \in \RepG^+$, let $\phi_{\mathbf{W}}:\mathcal{M} \to \mathcal{M}$ be the corresponding functor.  Let $\mathbf{det}, \mathbf{V} \in \RepG^+$ be the determinant and standard representations, respectively. Define $\phi_{\mathbf{V}^{\vee}} =\phi_{\wedge^{n-1}\mathbf{V}} \circ \phi_{\mathbf{det}}^{-1}$. It is easy to see that this defines an action of ${\Rep(\bfGL(t=n), \KK)}, \RepG$ as in $\ref{item:9}$. 
\end{proof}
\begin{corollary}
  \label{sec:repgk+-repgk-action}
  There is an action of $\RepG$ on $\mathcal{H}_n^{\perf}$, sending the action of the standard representation to the functor of convolution with $\hc(\mathcal{S}_n)$.
\end{corollary}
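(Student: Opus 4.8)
The plan is to deduce the corollary from Lemma~\ref{sec:repgk+-repgk-action-1}, applied to the symmetric monoidal category $\mathcal{C}=(P_G(G),\star^0)$ — which is $\KK$-linear, idempotent-complete, and symmetric by Proposition~\ref{sec:twist-cor} since every centralizer in $\GL_n$ is connected — to the object $X=\mathcal{S}_n$, and to the module category $\mathcal{M}=\mathcal{H}_n^{\perf}$. Theorem~\ref{sec:lie-group-case}~b) supplies the hypothesis $\wedge^{n+1}X=0$, and Theorem~\ref{sec:lie-group-case}~a) identifies $\wedge^n X\simeq\mathcal{S}_{\wedge^n}$. Thus the two things that genuinely need to be verified are: (i) that $(P_G(G),\star^0)$ acts on $\mathcal{H}_n^{\perf}$, and (ii) that $\mathcal{S}_{\wedge^n}$ acts on $\mathcal{H}_n^{\perf}$ by an autoequivalence. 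I expect step (ii) to be the main obstacle.

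For (i) I would start from the central action of $D^b_G(G)$ on $\mathcal{H}_n$ by $\mathcal{A}\mapsto\hc'(\mathcal{A})\star(-)$. It preserves $\mathcal{H}_n^{\perf}=\langle\T_{w_0}\rangle$: every object of $\mathcal{H}_n$ is a complex of tilting sheaves, and $\T_w\star\T_{w_0}\in\operatorname{add}(\T_{w_0})$ by the $\DD$-filtration of $\T_w$, Proposition~\ref{braid_relations}~\ref{item:14}, and the $t$-exactness of $-\star\T_{w_0}$ (Proposition~\ref{sec:exter-powers-spring-4}~\ref{item:19}); hence $\hc'(\mathcal{A})\star M\in\mathcal{H}_n^{\perf}$ for all $M\in\mathcal{H}_n^{\perf}$. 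This action of $(D^b_G(G),\star)$ descends to one of $(P_G(G),\star^0)$ because $\hc'$ is monoidal (Proposition~\ref{sec:exter-powers-spring-4}~\ref{item:15}) and, for $\F$ built from unipotent character sheaves, $\hc'(\F)\star\T_{w_0}$ depends only on $\pH^0(\F)$ — this is the identity $\pH^0(\hc'(\F)\star\T_{w_0})\simeq\hc'(\pH^0\F)\star\T_{w_0}$ together with perversity of $\hc'(\F)\star\T_{w_0}$, extracted from Proposition~\ref{sec:exter-powers-spring-4} in the proof of Proposition~\ref{sec:exter-powers-spring-1} — so that $\hc'(\mathcal{A}\star^0\mathcal{B})\star(-)$ and $\hc'(\mathcal{A})\star\hc'(\mathcal{B})\star(-)$ agree on $\mathcal{H}_n^{\perf}$. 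Given (i), part~\ref{item:9} of Lemma~\ref{sec:repgk+-repgk-action-1} (equivalently, Corollary~\ref{sec:action-repgk}) yields a monoidal functor $\RepG^+\to P_G(G)$ carrying the standard representation to $\mathcal{S}_n$, and composing with the action just described produces a $\RepG^+$-action on $\mathcal{H}_n^{\perf}$ in which the standard representation acts by convolution with $\hc'(\mathcal{S}_n)$, the object written $\hc(\mathcal{S}_n)$ in the statement.

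To conclude one applies part~\ref{item:10} of Lemma~\ref{sec:repgk+-repgk-action-1}, so it remains to carry out (ii). The key computation is $\hc'(\mathcal{S}_{\wedge^n})\star\T_{w_0}\simeq\T_{w_0}$: it is shown in the proof of Proposition~\ref{sec:exter-powers-spring-1} that $\hc'(\mathcal{S}_n^{\star^0 k})\star\T_{w_0}\simeq V^{\otimes k}\otimes_{\Qlbar}\T_{w_0}$ compatibly with the $\Sigma_k$-actions, whence $\hc'(\wedge^n\mathcal{S}_n)\star\T_{w_0}\simeq\wedge^n V\otimes_{\Qlbar}\T_{w_0}\simeq\T_{w_0}$ because $\dim\wedge^n V=1$. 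I would then argue that the endofunctor $F=\hc'(\mathcal{S}_{\wedge^n})\star(-)$ of $\mathcal{H}_n^{\perf}$ is an equivalence: it commutes with right convolution by $\mathcal{H}_n^{\perf}$ (the functor $\hc'$ is central), and it admits the adjoint $F'=\hc'(\mathcal{S}_{\wedge^n}^{\vee})\star(-)$ because $\mathcal{S}_n$ is rigid in $D^b_G(G)$ — it is Verdier self-dual, being a proper pushforward of a shifted constant sheaf from the smooth variety $\mathcal{N}_{P_1}$ (a vector bundle over $G/P_1$), and is supported on the inversion-stable unipotent variety — so $\mathcal{S}_{\wedge^n}$ is rigid in $(P_G(G),\star^0)$; applying $\hc'(-)\star\T_{w_0}$ to the evaluation and coevaluation of the rigid line object $\mathcal{S}_{\wedge^n}$ turns them into isomorphisms, since $\wedge^n V$ is invertible in $\RepG$, giving $F'F\simeq\operatorname{id}\simeq FF'$. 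Alternatively, $F$ can be identified with convolution by the image of the full-twist braid $j_1 j_2\cdots j_n$ under Rouquier's action, which is invertible by Proposition~\ref{braid_relations}; this identification is essentially the $\RepG$–$\RepT$ compatibility established later in the paper. With (ii) in hand, Lemma~\ref{sec:repgk+-repgk-action-1}~\ref{item:10} extends the $\RepG^+$-action to a $\RepG$-action on $\mathcal{H}_n^{\perf}$, which proves the corollary.

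The hard part, as flagged, is step (ii): upgrading the easy fact that $\hc'(\mathcal{S}_{\wedge^n})$ fixes the compact generator $\T_{w_0}$ to the assertion that it acts by an autoequivalence of the whole category $\mathcal{H}_n^{\perf}$. The rigidity-based argument is the route I would pursue; if one wishes to avoid verifying rigidity for the truncated convolution $\star^0$, the full-twist identification is an alternative, at the cost of borrowing the later comparison of the $\RepG$- and $\RepT$-actions.
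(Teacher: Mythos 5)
Your overall skeleton matches the paper's: apply Lemma \ref{sec:repgk+-repgk-action-1} with $\mathcal{C}=(P_G(G),\star^0)$, $X=\mathcal{S}_n$, $\mathcal{M}=\mathcal{H}_n^{\perf}$, using Theorem \ref{sec:lie-group-case} for $\wedge^{n+1}\mathcal{S}_n=0$; your step (i) is the paper's citation of Proposition \ref{sec:exter-powers-spring-4} \ref{item:16}, \ref{item:19} (the paper first projects $\zeta$ to unipotent character sheaves via $\hat{\zeta}(V)=(\zeta(V)\star E_k^0)_k$, which is harmless since only the unipotent part acts nontrivially on $\mathcal{H}_n^{\perf}$). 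The genuine divergence is step (ii). The paper does not argue rigidity at all: it simply quotes the identification $\hc(\mathcal{S}_{\wedge^n})\star\DD_e\simeq\DD_{w_0}^{\star 2}$ (Corollary 5.7.6 of \cite{hhh}), i.e.\ the determinant acts by the full twist, which is invertible by Proposition \ref{braid_relations}. That is precisely the ``alternative'' you mention at the end, and contrary to your worry it does not require the later $\RepG$--$\RepT$ compatibility --- it is an already-available computation in \cite{hhh}. Your preferred route via rigidity of $\mathcal{S}_{\wedge^n}$ in $(P_G(G),\star^0)$ is the weak point of the proposal: duality for the $*$-convolution $m_*$ on $D^b_G(G)$ is delicate because $m$ is not proper (even on unipotently supported objects), and passing evaluation/coevaluation through the truncation ${}^p\mathcal{H}^0$ is not justified by anything in the paper; as written this step is a gap, though the proof survives because your fallback is exactly the paper's argument. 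I would make the full-twist identification the primary route and drop, or substantially expand, the rigidity claim.
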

\begin{proof}
  By Theorem \ref{sec:lie-group-case} and Lemma \ref{sec:repgk+-repgk-action-1} \ref{item:9}, there is a functor $\zeta:\RepG^+ \to P_G(G)$. Projecting to the category of unipotent character sheaves, we have a functor $\hat{\zeta}:\RepG^+ \to \pro P^\bfone(G)$, sending a representation $\mathbf{W}$ to the system $\left(\zeta(\mathbf{W})\star E_k^{0}\right)_{k=0}^{\infty}$ (note that the projections of $\wedge^k\mathcal{S}_n$ are perverse by \cite[Lemma 5.6.3]{hhh}). By Proposition \ref{sec:exter-powers-spring-4} \ref{item:16}, \ref{item:19}, $P^\bfone(G)$ acts on $\mathcal{H}_n^{\perf}$. Moreover, the action of $\mathbf{det} \in \RepG^+$ obtained is by $\hc(\mathcal{S}_{\wedge^n})\star\DD_e \simeq \DD_{w_0}^{\star 2}$, where the last isomorphism is \cite[Corollary 5.7.6]{hhh}. From  Proposition \ref{braid_relations} \ref{item:14} it follows that this action is by an invertible functor, and so by Lemma \ref{sec:repgk+-repgk-action-1} \ref{item:10} we get the result.
\end{proof}
Let us retain the notation from the proof of Corollary \ref{sec:repgk+-repgk-action}: for $\mathbf{W} \in \RepG^+$ let $\hat{\zeta}(\mathbf{W}) \in \pro P_G(G)$ be the corresponding unipotent character (pro-)sheaf. For an arbitrary $\mathbf{W} \in \RepG$, let $\phi_\mathbf{W} : \mathcal{H}_n^{\perf}\to\mathcal{H}_n^{\perf}$ be the corresponding functor. We have $\phi_\mathbf{W}(-) \simeq \hc(\hat{\zeta}(\mathbf{W})) \star -$ for $\mathbf{W} \in \RepG^+$.
 \section{Construction of the linear structure.}
 \label{sec:extens-funct-from-2}
\subsection{Main result and Tannakian formalism for the perfect subcategory.}
This section is occupied with the proof of the following
\begin{theorem}
 \label{sec:extens-funct-from-1} There is a monoidal functor $\varpi:\perf_\bfG(\gstb) \to \mathcal{H}_n$ satisfying the following:
  \begin{enumerate}[label=\alph*),ref=(\alph*)]
  \item $\varpi(\cO_{\gstb}) = \T_{w_0}$.
  \item $\varpi(\cO_{\gstb}(\boldsymbol\lambda, \boldsymbol\mu)) = \mathfrak{J}_{\boldsymbol\lambda}\star\T_{w_0}\star\mathfrak{J}_{\boldsymbol\mu}$ for any two weights $\boldsymbol\lambda, \boldsymbol\mu \in \RepT$.
  \item $\varpi(\mathbf{W} \otimes \cO_{\gstb}) = \phi_\mathbf{W}\T_{w_0}$ for any representation $\mathbf{W}$ of $G$.
  \end{enumerate}
\end{theorem}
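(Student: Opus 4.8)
The plan is to obtain $\varpi$ from Bezrukavnikov's Tannakian description of the monoidal category $\perf_\bfG(\gstb)$ in \cite{bezrukavnikovTwoGeometricRealizations2016}, applied to $\mathcal{T} = \mathcal{H}_n^{\perf}$. Recall that this formalism produces a monoidal (triangulated) functor $\perf_\bfG(\gstb)\to\mathcal{T}$ from the following data: two \emph{linear structures over the stack $\gspr/\bfG$} on $\mathcal{T}$ --- that is, two actions of $\Rep(\bfG\times\bfT)$ subject to the requisite compatibilities --- together with an identification of their underlying $\RepG$-actions; here $\gstb = \gspr\times_\g\gspr$, so the two linear structures correspond to the two factors and the shared $\RepG$-action to the base $\g$. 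I would begin by recalling this statement in the precise form needed, spelling out the compatibility conditions: the character identity relating the $\RepT$- and $\RepG$-actions, and the pro-unipotent monodromy datum that upgrades that identity to an identity of filtered functors.

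Next I would construct the two linear structures on $\mathcal{H}_n^{\perf}$. For the torus part, take the two actions of $\RepT$ by left, respectively right, convolution with the Jucys--Murphy sheaves $\mathfrak{J}_\lambda$ from Section \ref{sec:jucys-murphy-sheav}; by Proposition \ref{braid_relations} \ref{item:14} these preserve $\mathcal{H}_n^{\perf}$ and act by invertible functors. For the group part, take in both cases the action $V\mapsto\phi_V$ of Corollary \ref{sec:repgk+-repgk-action}, where $\phi_V(-) = \hc(\hat\zeta(V))\star(-)$ for $V\in\RepG^+$ and where the passage to all of $\RepG$ uses Lemma \ref{sec:repgk+-repgk-action-1} \ref{item:10} (the determinant acting invertibly, by $\DD_{w_0}^{\star 2}$). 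Since $\hc$ is a central functor, $\hc(\hat\zeta(V))$ is a (pro-)object of the Drinfeld centre of $\mathcal{H}_n$; hence the left and right versions of this $\RepG$-action are canonically identified and each commutes with both Jucys--Murphy actions. This is the stage at which Theorem \ref{sec:lie-group-case} enters, through Corollary \ref{sec:repgk+-repgk-action} and Proposition \ref{sec:exter-powers-spring-4}.

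The heart of the argument --- and the step I expect to be the main obstacle --- is verifying the compatibility between the $\RepG$- and $\RepT$-actions within each linear structure. The input is that $\wedge^k V$ acts through $\phi_{\wedge^k V} = \hc(\Spgr_{\wedge^k})\star(-)$ and that, by Theorem \ref{sec:lie-group-case}, $\Spgr_{\wedge^k}\simeq\ind^G_{P_k}\delta^{L_k}_e$ is parabolically induced from the Levi $L_k = \GL_k\times\GL_{n-k}$. Applying a Mackey-type filtration to $\hc$ precomposed with $\ind^G_{P_k}$ --- in the spirit of Section \ref{sec:generic-monodromy}, of the Mackey computation in the proof of Proposition \ref{sec:exter-powers-spring}, and of \cite{hhh}, Corollary 6.2.7 --- one obtains a filtration of $\phi_{\wedge^k V}(\T_{w_0})$ whose associated graded is the sum, over the weights $\mu$ of $\wedge^k V$ counted with multiplicity, of the objects $\mathfrak{J}_\mu\star\T_{w_0}$. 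This categorifies the expansion of $\operatorname{ch}(\wedge^k V)$ in the monomial basis, i.e.\ the identity expressing the central element by the elementary symmetric function in the Bernstein generators of $H_n^{\mathrm{aff}}$. The filtration is non-split, and the requisite monodromy (the logarithm of monodromy demanded by the formalism, matched against the scaling $\mathbb{G}_m$-action on $\gspr$ on the coherent side) is produced from the monodromy along the central $\mathbb{G}_m\subset\GL_k$ of the Levi $L_k$; I would check that this is exactly the pro-unipotent datum required. By induction on $n$, this reduces to the case of $\GL_k$ and $\GL_{n-k}$ and to the inductive structure of the $\Spgr_{\wedge^k}$.

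Granting the two compatible linear structures with identified $\RepG$-action, the formalism of \cite{bezrukavnikovTwoGeometricRealizations2016} yields the monoidal functor $\varpi:\perf_\bfG(\gstb)\to\mathcal{H}_n^{\perf}\subset\mathcal{H}_n$. Finally I would read off (a)--(c). The monoidal category $\perf_\bfG(\gstb)$ is generated by the line bundles $\cO_{\gstb}(\lambda,\mu) = p_1^*\cO_{\gspr}(\lambda)\otimes p_2^*\cO_{\gspr}(\mu)\otimes\cO_{\gstb}$ (for $p_1,p_2:\gstb\to\gspr$ the two projections), on which the two $\RepT$-factors of the Tannakian data act by tensoring with $p_1^*\cO_{\gspr}(\lambda)$ and $p_2^*\cO_{\gspr}(\mu)$ respectively; since $\varpi$ intertwines these with the left and right Jucys--Murphy actions, $\varpi(\cO_{\gstb}(\lambda,\mu)) = \mathfrak{J}_\lambda\star\varpi(\cO_{\gstb})\star\mathfrak{J}_\mu$, and the normalization of the construction gives $\varpi(\cO_{\gstb}) = \T_{w_0}$; this is (b), with (a) its special case $\lambda=\mu=0$ (note $\mathfrak{J}_0 = \hat\delta$). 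Statement (c) is immediate, since $V\otimes\cO_{\gstb}$ is the image of $\cO_{\gstb}$ under the $\RepG$-action, whence $\varpi(V\otimes\cO_{\gstb}) = \phi_V(\T_{w_0})$.
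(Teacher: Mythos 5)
Your proposal follows essentially the same route as the paper: it invokes the Tannakian formalism of \cite{bezrukavnikovTwoGeometricRealizations2016} with the two $\RepT$-actions given by left/right convolution with Jucys--Murphy sheaves, the shared central $\RepG$-action given by $\phi_V$ from Corollary \ref{sec:repgk+-repgk-action}, the Jucys--Murphy filtration of $\hc\hat\zeta(\wedge^k\mathbf{V}_n)$ categorifying the weight decomposition, and the monodromy endomorphism coming from the central $\mathbb{G}_m\subset\GL_k$ of the Levi $L_k$. The only step you elide is that the formalism first yields an action of $\coh^{\mathrm{fr}}_{\bfG\times\bfT^2}(\overline{\mathbf{C}}_{\gstb})$ on an affinized model, and one must check that the Koszul complexes $K_\lambda$ annihilate $\T_{w_0}$ (they do, being acyclic complexes of representations tensored with $\T_{w_0}$) to descend to $\perf_\bfG(\gstb)$, plus the bicomplex comparison for monoidality---both routine given your setup.
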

We employ the general construction of \cite{bezrukavnikovTwoGeometricRealizations2016}. Let $\mathfrak{t}$ be the Lie algebra of $\bfT$.
Let $\fl = \bfG/\bfB$ and $\mathbf{Y} = \bfG/\bfU$.
Let $\mathbf{C}_{\gspr}$ be the preimage in $\g \times \mathbf{Y}$ of $\gspr$ under the natural projection ${\g \times \mathbf{Y} \to \g \times \fl}$. Since the right $\bfT$-action on $\mathbf{Y}$ is free, we have $\coh_{\bfG\times \bfT}(\mathbf{C}_{\gspr}) = \coh_{\mathbf{G}}(\gspr)$.
Let $\overline{\mathbf{Y}} = \operatorname{Spec}\Gamma(\cO_\mathbf{Y})$ be the affine
closure of $\mathbf{Y}$.

Now pick a representation $\mathbf{V_0}$ of $\bfG$, such that
\begin{enumerate}[label=\alph*),ref=(\alph*)]
\item $\mathbf{V_0}$ is a multiplicity-free sum of irreducible representations;
\item $\mathbf{Y}$ is a locally closed subvariety of $\mathbf{V_0}$, namely an orbit of a
highest-weight vector;
\item $\overline{\mathbf{Y}}$ is the closure of $\mathbf{Y}$ inside $\mathbf{V_0}$.
\end{enumerate} Choose an action of $\mathfrak{t}$ on $\mathbf{V_0}$ so that $\mathbf{t}
\in \mathfrak{t}$ acts on an irreducible summand with the highest
weight $\boldsymbol\lambda$ by the constant $\langle\boldsymbol\lambda, \mathbf{t}\rangle$.
Let $\overline{\mathbf{C}}_{\gspr}$ be the closed subscheme of $\g \times
\mathfrak{t} \times \overline{\mathbf{Y}}$ given by $$\overline{\mathbf{C}}_{\gspr} =
\{(\mathbf{x},\mathbf{t},\mathbf{v})\in \g \times \mathfrak{t} \times \overline{\mathbf{Y}} \subset \g
\times \mathfrak{t} \times \mathbf{V_0} : \mathbf{x}(\mathbf{v}) = \mathbf{t}(\mathbf{v})\}.$$

$\mathbf{C}_{\gspr}$ becomes an open subset in $\overline{\mathbf{C}}_{\gspr}$, given by
$(\mathbf{x},\mathbf{t},\mathbf{v})$ with $\mathbf{v}$ having a non-zero projection to all irreducible summands of $\mathbf{V_0}$.
Let $\overline{\mathbf{C}}_{\textrm{St}}$ be the preimage of the diagonal under
the map $\overline{\mathbf{C}}_{\gspr} \times \overline{\mathbf{C}}_{\gspr} \to \g
\times \g$. We have an open subset $\mathbf{C}_{\gstb} \subset
\overline{\mathbf{C}}_{\gstb}$ with a free $\bfT^2$-action, coming from the right $\bfT$-action on $\mathbf{Y} = \bfG/\mathbf{U}$, and an isomorphism
$\mathbf{C}_{\gstb}/\bfT^2 = \gstb.$

For a variety $\mathbf{X}$ with an action of an algebraic group $\mathbf{H}$, let
$\coh^{\mathrm{fr}}_{\mathbf{H}}(\mathbf{X})$ be the full subcategory of $\coh_{\mathbf{H}}(\mathbf{X})$, consisting of objects of the form $\mathbf{W} \otimes \cO_{\mathbf{X}}$ with $\mathbf{W} \in \Rep (\mathbf{H})$.

For a dominant weight $\boldsymbol\lambda$ let $\mathbf{V}_{\boldsymbol\lambda}\in\Rep(\bfG)$ be the irreducible representation with the highest weight $\boldsymbol\lambda$.
The following statement is \cite[Corollary 18 and Section 4.4.1]{bezrukavnikovTwoGeometricRealizations2016}.
\begin{proposition}
 \label{sec:extens-funct-from} Let $\mathcal{C}$ be a $\KK$-linear additive monoidal category. Suppose we are given
    \begin{enumerate}[label=\alph*),ref=(\alph*)]
    \item A tensor functor $F: \Rep (\bfG \times \bfT) \to \operatorname{End}(\mathcal{C})$, respecting the triangulated structure.
    \item A tensor endomorphism $E$ of $F|_{\RepG},$ satisfying
      \begin{equation*}
        E_{\mathbf{V}_1\otimes \mathbf{V}_2} = E_{\mathbf{V}_1} \otimes Id_{F(\mathbf{V}_2)} + Id_{F(\mathbf{V}_1)}\otimes E_{\mathbf{V}_2}.
      \end{equation*}
      \label{item:95}
    \item An action of $\cO_{\mathfrak{t}}$ on $F$ by endomorphisms, such that for $f \in \cO_\mathfrak{t}$ we have $f_{\mathbf{V}_1 \otimes \mathbf{V}_2} = f_{\mathbf{V}_1} \otimes Id_{F(\mathbf{V}_2)} = Id_{F(\mathbf{V}_1)} \otimes f_{\mathbf{V}_2}$.
     \label{item:96}
    \item \label{item:17} A ``highest weight arrow'' $w_{{\boldsymbol\lambda}}:F({\boldsymbol\lambda}) \to F(\mathbf{V}_{\boldsymbol\lambda})$ making the following diagrams commutative:
    \begin{equation}
      \label{eq:2}
      \begin{tikzcd} F({{\boldsymbol\lambda}})\otimes_{\mathcal{C}} F({{\boldsymbol\mu}}) \arrow[r] \arrow[d, "w_{\boldsymbol\lambda}\otimes_{\mathcal{C}}w_{\boldsymbol\mu}"'] & F\left({\boldsymbol\lambda}+{\boldsymbol\mu}\right) \arrow[d, "w_{{\boldsymbol\lambda}+{\boldsymbol\mu}}"] \\
F(\mathbf{V}_{\boldsymbol\lambda}) \otimes_{\mathcal{C}} F(\mathbf{V}_{\boldsymbol\mu}) \simeq F(\mathbf{V}_{\boldsymbol\lambda} \otimes \mathbf{V}_{\boldsymbol\mu}) \arrow[r] & F(\mathbf{V}_{{\boldsymbol\lambda} + {\boldsymbol\mu}})
      \end{tikzcd}
    \end{equation}
    \begin{equation*}
    \begin{tikzcd} F({{\boldsymbol\lambda}}) \arrow[r, "w_{{\boldsymbol\lambda}}"] \arrow[d, "{{\boldsymbol\lambda}}"'] & F(\mathbf{V}_{\boldsymbol\lambda}) \arrow[d, "E_{\mathbf{V}_{\boldsymbol\lambda}}"] \\
F({{\boldsymbol\lambda}}) \arrow[r, "w_{{\boldsymbol\lambda}}"] & F(\mathbf{V}_{\boldsymbol\lambda})
      \end{tikzcd}
      \end{equation*} where the right vertical map is the action of the element ${\boldsymbol\lambda}\in\mathfrak{t}^{\vee}\subset \cO(\mathfrak{t})$ coming from \ref{item:96}.
    \end{enumerate} Then the tensor functor $F$ extends uniquely (in the sense of \cite{bezrukavnikovTwoGeometricRealizations2016}) to an action of $\coh^{\mathrm{fr}}_{\bfG\times \bfT}(\overline{\mathbf{C}}_{\gspr})$ on $\mathcal{\mathbf{C}}$. If we have two instances of the above data, we get an action of $\coh^{\mathrm{fr}}_{\bfG^2\times \bfT^2}(\overline{\mathbf{C}}_{\gspr}\times \overline{\mathbf{C}}_{\gspr})$. In this case, assume, moreover, that
    \begin{enumerate}[resume, label=\alph*), ref=(\alph*)]
    \item the action of $\Rep (\bfG \times \bfG)$ factors through the action of $\Rep(\bfG)$ using the restriction along the diagonal embedding $\bfG \to \bfG \times \bfG$.
    \item this action of $\RepG$ is by product with central objects in $\mathcal{C}$, with endomorphism $E$ from \ref{item:95} respecting the central structure.
    \end{enumerate} Then we have an action of $\coh^{\mathrm{fr}}_{\bfG \times \bfT^2}(\overline{\mathbf{C}}_{\gstb})$, extending the above data.
  \end{proposition}

  We now indicate the structures listed in the above proposition, constructing an action of $\coh^{\mathrm{fr}}_{\bfG \times \bfT^2}(\overline{\mathbf{C}}_{\gstb})$ on $\mathcal{H}_n$.

  First, the action of $\Rep (\bfG \times \bfG)$ is given by Corollary \ref{sec:repgk+-repgk-action} applied to left and right action of $P^\bfone(G)$ on $\mathcal{H}_n^{\perf}$. The action of $\Rep (\bfT\times \bfT)$ is given by left and right convolutions with Jucys--Murphy sheaves $\mathfrak{J}_{\boldsymbol\lambda}$.

The action of $f \otimes g \in \cO_{\mathfrak{t}\times\mathfrak{t}}$ is given by the action of $f$ and $g$ coming from left and right monodromy, respectively.
  
  We construct the rest of the data and prove the needed compatibilities in the following subsections.
  
  \subsection{Highest weight arrow and monodromy endomorphism.} Let $V_n, \mathbf{V}_n$ stand for the standard $n$-dimensional representation of $\GL_n, \bfGL_n$ with fixed bases $(e_1, \dots, e_n), (\bfe_1, \dots, \bfe_n)$, respectively. For $k < n$ consider the subspace $V_k = \operatorname{span}_\kk(e_1, \dots, e_k)$, respectively $\mathbf{V}_k = \operatorname{span}_\KK(\bfe_1, \dots, \bfe_k)$, as a standard representation of $\GL_k = \GL(V_k), \bfGL_k = \bfGL(\mathbf{V}_k)$. Let $\operatorname{triv}_{\mathbf{GL}_{n-k}}$ stand for the trivial representation of $\mathbf{GL}_{n-k}$. Consider \[{\stigma}_k = \hat{\zeta}(\wedge^k\mathbf{V}_k) \boxtimes \hat{\zeta}(\operatorname{triv}_{\mathbf{GL}_{n-k}}),\] a (pro-)character sheaf on the Levi subgroup ${L}_k = \GL_k \times {\GL}_{n-k}$ of $\GL_n$. Let ${P}_k \subset \GL_n$ be the parabolic subgroup preserving the subspace $V_k$.

  Note that we abuse notation here, denoting by $\hat{\zeta}$ functors from $\operatorname{Rep}\bfGL_l^+$ with different $l$. It should always be clear what we mean.

  We have $\hat{\zeta}(\wedge^k \mathbf{V}_n) \simeq \ind_{P_k}^G(\stigma_k)$. For an arbitrary reductive group $G$, let $G^{rss}$ stand for the open subset of regular semisimple elements. $\stigma_k$ is the IC-extension of the (pro-)local system on $L_k^{rss}=\textrm{GL}_k^{rss} \times \textrm{GL}_{n-k}^{rss}$. It will be convenient to describe this (pro-)local system explicitly. Recall that the category of pro-unipotent local systems on $T$, as defined in \cite{bezrukavnikovKoszulDualityKacMoody2013}, can be identified with the category of $\KK[[\mathbf{V}_n]]$-modules. Let
  \[
    A_k = \KK[S_k \times S_{n-k}]\ltimes \KK[[\mathbf{V}_n]] = \KK[S_k \times S_{n-k}]\ltimes \KK[[\bfe_1, \dots, \bfe_n]].
  \]
  We have a functor from the category of $A_k$-modules to the category of (pro-)local systems on $L_k^{rss}$ given by restriction along the embedding $T^{rss} \to T$ and descent along the $S_k \times S_{n-k} \simeq W(L_k)$-cover $T^{rss}/T \to L_k^{rss}/L_k$.

   Note that the element $(\bfe_1+\dots +\bfe_k)\in A_k$ is central, so acts by module endomorphisms. We have
\[
  \hc(\hat{\zeta}(\wedge^k \mathbf{V}_n)) = \hc_{B}^{P_k}\hc_{P_k}^G\chi_{P_k}^G\iota_{P_k}(\stigma_k).
\]
Let $\boldsymbol\omega_k$ be the highest weight of $\wedge^k V_n$. We have
\[
  \hc_{B}^{P_k} \iota_{P_k}(\stigma_k) \simeq \mathfrak{J}_{\boldsymbol\omega_k} = \mathfrak{J}_1\star\dots \star \mathfrak{J}_k,
\]
by \cite[Lemma 6.2.4]{hhh}. We also have a natural unit morphism
\[
  r_k: \iota_{P_k}(\stigma_k) \to \hc_{P_k}^G\chi_{P_k}^G\iota_{P_k}(\stigma_k).
\]

Combining, we get an endomorphism
\[
  \ind_{P_k}^{G}(\bfe_1 + \dots + \bfe_k) =: \varepsilon_k \in \operatorname{End}(\hat{\zeta}(\wedge^k \mathbf{V}_n)),
\]
which makes the following diagram commutative:

\[
  \begin{tikzcd}
  \mathfrak{J}_{{\boldsymbol\omega}_k} \arrow[r, "w_{{\boldsymbol\omega}_k}"] \arrow [d, "{\boldsymbol\omega}_{k}"'] &\hat{\zeta}(\wedge^k \mathbf{V}_n)\arrow[d, "E_k"] \\
    \mathfrak{J}_{{\boldsymbol\omega}_k} \arrow[r, "w_{{\boldsymbol\omega}_k}"] & \hat{\zeta}(\wedge^k \mathbf{V}_n) 
  \end{tikzcd}
\]

      Here $w_{{\boldsymbol\omega}_k} = \hc_B^{P_k}(r_k), E_k = \hc(\varepsilon_k)$, and ${\boldsymbol\omega}_k$ comes from the monodromy action on $\mathfrak{J}_{{\boldsymbol\omega}_k}$.
      
      We define $w_{{\boldsymbol\omega}_k}$ to be the highest weight arrows for the images of fundamental representations $\wedge^k\mathbf{V}_n$. For a dominant weight $\boldsymbol\lambda = \sum a_i{\boldsymbol\omega}_i,$ we must have, from the condition \ref{item:17} of Proposition \ref{sec:extens-funct-from}, $w_\lambda = \pi_{\boldsymbol\lambda} ( w_{{\boldsymbol\omega}_1}^{\star a_1} \star \dots \star w_{\boldsymbol\omega_n}^{\star a_n})$, where $\pi_{\boldsymbol\lambda}$ stands for the projection $\pi_{\boldsymbol\lambda}:\hat{\zeta}(\otimes \mathbf{V}_{\boldsymbol{\omega}_k}^{\otimes a_i}) \to \hat{\zeta}(\mathbf{V}_{\boldsymbol\lambda})$.

      Now the condition \ref{item:95} of Proposition
\ref{sec:extens-funct-from} recovers $E_\mathbf{V}$ from $E_1$ in a unique way.
The only thing left to check is
      \begin{lemma} The morphisms $E_{{\boldsymbol\omega}_k}$ obtained from $E_1$ by imposing
condition $\ref{item:95}$ are equal to the morphisms $E_k$ defined above.
      \end{lemma}
      \begin{proof} Recall that we have a natural morphism of functors
        \[
          \ind^G_{P_1}\res^G_{P_1}(-) \to \Spgr_n\star(-),
        \]
        which gives a morphism
        \[
 \left(\ind^G_{P_1}\res^G_{P_1}\right)^{\circ k}(-)\to \Spgr_n^{\star k}\star(-).
        \]
        By the proof of Proposition \ref{sec:exter-powers-spring}, it restricts to an isomorphism on the $S_k$-isotypic component corresponding to the sign representation. On the right-hand side this isotypic component is identified with the convolution $\hat{\zeta}(\wedge^k\mathbf{V}_n) \star (-)$. On the left-hand side, the $S_k$-action can be described explicitly as follows.  If a character sheaf $\F$ is given by the IC{-extension} of the local system corresponding to the $A_n$-module $M$, then $\ind^G_P\res^G_P\F$ is given by the IC-extension of the local system corresponding to a $A_n$-module $\KK[S_n] \otimes_{\KK[S_{n-1}]} M$. Choosing an isomorphism of $S_n$-modules
        \[
          \KK[S_n]^{\otimes_{\KK[S_{n-1}]}k}\otimes_{\KK[S_{n-1}]}\operatorname{triv}_{S_n} \simeq \mathbf{V}_n^{\otimes k}
        \]
        and comparing the actions under this identification, we get
that the action of $E_{{\boldsymbol\omega}_k}$ on the corresponding isotypic component is the same as given by $E_{k}$.
      \end{proof}
      
\subsection{Filtration by Jucys--Murphy sheaves.}
\label{sec:filtr-jucys-murphy} We have that $P_k \supset B$ is the parabolic subgroup corresponding to the subset \[J = \{s_1, \dots, s_{k-1}, s_{k+1}, \dots, s_{n-1}\} \subset I.\] By \cite[Theorem 6.2.1]{hhh}, we get
\begin{lemma}
  $\hc(\hat{\zeta}(\wedge^k\mathbf{V}_n)) \in \langle\mathfrak{J}_{{\boldsymbol\lambda}'}\rangle_{{\boldsymbol\lambda}' \in W{\boldsymbol\omega}_k}$.
\end{lemma}

\subsection{Passage from $\overline{\mathbf{C}}_{\mathbf{St}_{\mathfrak{g}}}$ to ${\mathbf{St}_{\mathfrak{g}}}.$}
We have constructed an action of $\coh^{\mathrm{fr}}_{\bfG\times \bfT^2}(\overline{\mathbf{C}}_{\gstb})$ on $\mathcal{H}_n$. Consider the action on the object $\hat{\mathcal{T}}_{w_0}$. This gives a functor
\[\varpi_{\textrm{fr}}:\coh^{\mathrm{fr}}_{\bfG\times \bfT^2}(\overline{\mathbf{C}}_{\gstb})\to \mathcal{H}_n.\]

Since $\Rep (\bfG \times \bfT^2)$ acts by convolution with objects having a filtration by products of costandard objects $\NN_w$, Proposition \ref{braid_relations} gives that ${\varpi}_{\textrm{fr}}$ in fact sends $\coh^{\mathrm{fr}}_{\bfG\times \bfT^2}(\overline{\mathbf{C}}_{\gstb})$ to $\Tilt^{\wedge}(G)$. Thus, we get a functor
\[
  \varpi_{\textrm{fr}}:\Ho^b(\coh^{\mathrm{fr}}_{\bfG\times \bfT^2}(\overline{\mathbf{C}}_{\gstb})) \to \Ho^b({\Tilt^{\wedge}(G)}) \simeq \mathcal{H}_n.
\] Finally, we employ the following proposition from \cite{zbMATH05601711}. In any tensor category over a field of characteristic zero one can construct a Koszul complex $K_{\phi,d}$ associated with any morphism $\phi\colon L \to V$ and $d \in
\mathbb{Z}^{>0}$, namely the complex
\(
  0 \to Sym^d(L) \to \dots  {\to \wedge^{d-i}(V)\otimes Sym^i(L) \to} \dots \to \wedge^{d-1}(V)\otimes L \to \wedge^d(V) \to 0,
\)
with differential induced by $\phi$.

Let $K_{{\boldsymbol\lambda}}$ be the Koszul complex $K_{w_{{\boldsymbol\lambda}},\dim \bf{V}_{{\boldsymbol\lambda}}}$ in $\coh^{\mathrm{fr}}_{\bfG\times \bfT^2}(\overline{\mathbf{C}}_{\gstb})$. We will need the following proposition, \cite[Lemma 20]{zbMATH05601711}.
\begin{proposition}
  \label{sec:pass-from-overl}
  Assume there is an action 
  \[
    a\colon\Ho^b(\coh^{\mathrm{fr}}_{\bfG\times \bfT^2}(\overline{\mathbf{C}}_{\gstb}))\to\mathrm{End}(\mathcal{C})
  \]
 of $\Ho^b(\coh^{\mathrm{fr}}_{\bfG\times \bfT^2}(\overline{\mathbf{C}}_{\gstb}))$ on a $\KK$-linear idempotent-complete triangulated category $\mathcal{C}$, such that $a(K_{{\boldsymbol\lambda}})x = 0$ for all ${\boldsymbol\lambda}$ and some object $x \in \operatorname{Ob}(\mathcal{C})$. Then the functor
  \[
    F_{\textrm{fr}}:\Ho^b(\coh^{\mathrm{fr}}_{\bfG \times \bfT^2}\overline{\mathbf{C}}_{\gstb})\to\mathcal{C}, y \mapsto a(y)x
  \]
  factors through the functor
  \[
  F: \perf_{\bfG}({\gstb}) \to \mathcal{C}.
  \]
\end{proposition}
The functor $\varpi_{\textrm{fr}}$ satisfies the condition of the above proposition, because, on an object of the form $\mathbf{W} \otimes \cO_{\overline{C}_{\gstb}}$ with $\mathbf{W} \in \Rep (\bfG \times \bfT^2)$, it is given by $\mathbf{W} \otimes \T_{w_0}$. We have
\[
  \varpi_{\textrm{fr}}(K_{{\boldsymbol\lambda}}) = k_{{\boldsymbol\lambda}}\otimes {\mathcal{T}}_{w_0} = 0,
\]
where $k_{{\boldsymbol\lambda}}$ is the (obviously acyclic) Koszul complex in $\Rep (\bfG\times \bfT^2)$. Applying Proposition \ref{sec:pass-from-overl} we get a functor
\[
  \varpi: \perf_{\bfG}({\gstb}) \to \mathcal{H}_n.
\]
\subsection{Compatibility with the monoidal structure.}
\label{sec:comp-with-mono}
We can identify the category $\mathcal{H}_n^{\perf}$ with the bounded homotopy category of free $\cO_{\hat{0}}(\mathfrak{t}\times_{\mathfrak{t}/W}{\mathfrak{t}})$-modules, where the subscript in $\cO_{\hat{0}}$ stands for the formal completion at $(0, 0) \in \mathfrak{t}\times_{\mathfrak{t}/W}{\mathfrak{t}}$ \cite[Theorem 9.1]{zbMATH07610537}. From the computation above it is easy to see that, passing through this equivalence, $\varpi$ can be identified with the functor of restriction to the Steinberg cross-section $\mathfrak{t} \times _{\mathfrak{t}/W}\mathfrak{t} \to \gstb$. Since the latter is monoidal, and the image of $\perf_{\bfG}(\gstb)$ is in $\mathcal{H}_n^{\perf}$, we get the result.
\printbibliography
\Addresses
\end{document}